\documentclass[a4paper]{article}


\usepackage{epsfig}
\usepackage{graphicx}
\usepackage{amsbsy}
\usepackage{amsmath}
\usepackage{amsfonts}
\usepackage{amssymb}
\usepackage{textcomp}
\usepackage{hyperref}
\usepackage{aliascnt}

\newcommand{\mcm}[3]{\newcommand{#1}[#2]{{\ensuremath{#3}}}} 

\mcm{\tuple}{1}{\langle #1 \rangle}
\mcm{\name}{1}{\ulcorner #1 \urcorner}
\mcm{\Nbb}{0}{\mathbb{N}}
\mcm{\Zbb}{0}{\mathbb{Z}}
\mcm{\Rbb}{0}{\mathbb{R}}
\mcm{\Cbb}{0}{\mathbb{C}}
\mcm{\Qbb}{0}{\mathbb{Q}}
\mcm{\Bcal}{0}{\cal B}
\mcm{\Ccal}{0}{\cal C}
\mcm{\Dcal}{0}{\cal D}
\mcm{\Ecal}{0}{\cal E}
\mcm{\Fcal}{0}{\cal F}
\mcm{\Gcal}{0}{\cal G}
\mcm{\Hcal}{0}{\cal H}
\mcm{\Ical}{0}{\cal I}
\mcm{\Jcal}{0}{\cal J}
\mcm{\Kcal}{0}{\cal K}
\mcm{\Lcal}{0}{\cal L}
\mcm{\Mcal}{0}{\cal M}
\mcm{\Ncal}{0}{\cal N}
\mcm{\Ocal}{0}{{\cal O}}
\mcm{\Pcal}{0}{{\cal P}}
\mcm{\Qcal}{0}{{\cal Q}}
\mcm{\Rcal}{0}{{\cal R}}
\mcm{\Scal}{0}{{\cal S}}
\mcm{\Tcal}{0}{{\cal T}}
\mcm{\Ucal}{0}{{\cal U}}
\mcm{\Vcal}{0}{{\cal V}}
\mcm{\Wcal}{0}{{\cal W}}
\mcm{\Xcal}{0}{{\cal X}}
\mcm{\Ycal}{0}{{\cal Y}}
\mcm{\Mfrak}{0}{\mathfrak M}

\mcm{\restric}{0}{\upharpoonright}
\mcm{\upset}{0}{\uparrow}
\mcm{\onto}{0}{\twoheadrightarrow}
\mcm{\smallNbb}{0}{{\small \mathbb{N}}}
\DeclareMathOperator{\preop}{op}
\mcm{\op}{0}{^{\preop}}

\newcommand{\se}{\subseteq}
%
{\begin{array}{c}
\setlength{\unitlength}{1em}}%
{\end{array}}

\usepackage{amsthm}

\newcommand{\theoremize}[2]{\newaliascnt{#1}{thm} \newtheorem{#1}[#1]{#2} \aliascntresetthe{#1}}

\theoremstyle{plain}
\newtheorem{thm}{Theorem}[section]
\theoremize{lem}{Lemma}
\theoremize{sublem}{Sublemma}
\theoremize{claim}{Claim}
\theoremize{rem}{Remark}
\theoremize{prop}{Proposition}
\theoremize{cor}{Corollary}
\theoremize{que}{Question}
\theoremize{oque}{Open Question}
\theoremize{con}{Conjecture}

\theoremstyle{definition}
\theoremize{dfn}{Definition}
\theoremize{eg}{Example}
\theoremize{exercise}{Exercise}
\theoremstyle{plain}

\usepackage{verbatim}
\usepackage[inline]{enumitem}
\usepackage[all]{xy}
\usepackage{xifthen}

\usepackage[usenames,dvipsnames]{pstricks}
\usepackage{epsfig}

\title{On tree-decompositions of one-ended graphs}

\author{
Johannes Carmesin%
\thanks{Department of Pure Mathematics and Mathematical Statistics, University of Cambridge, Wilberforce Road, Cambridge CB3 0WB, United Kingdom} , 
Florian Lehner%
\thanks{Mathematics Institute, University of Warwick, Zeeman Building, Coventry CV4 7AL, United Kingdom \newline Florian Lehner was supported by the Austrian Science Fund (FWF), grant J 3850-N32} , and 
R\"ognvaldur G.\ M\"oller%
\thanks{Science Institute, University of Iceland, IS-107 Reykjav\'ik, Iceland \newline R\"ognvaldur G.\ M\"oller acknowledges support from the University of Iceland Research Fund}
}

\newcommand{\sm}{\setminus}

\DeclareMathOperator{\Aut}{Aut}

\newcommand{\Srel}{\mathcal S_{\omega}}

\begin{document}
\maketitle

\begin{abstract}
A graph is one-ended if it contains a ray (a one way infinite path) and whenever we remove a finite number of vertices from the graph then what remains has only one component which contains rays.  A vertex $v$ {\em dominates} a ray in the end if there are infinitely many paths connecting $v$ to the ray such that any two of these paths have only the vertex $v$ in common.  
We prove that  if a one-ended graph contains no ray which is dominated by a vertex and no infinite family of pairwise disjoint rays, then it has a tree-decomposition such that the decomposition tree is one-ended
and the tree-decomposition is invariant under the group of automorphisms. 

This can be applied to prove a conjecture 
of Halin from 2000 that the automorphism group of such a graph cannot be countably infinite and solves a recent problem of Boutin and Imrich. Furthermore, it implies that every transitive one-ended graph contains an infinite family of pairwise disjoint rays.
\end{abstract}

\section{Introduction}

The ends of a graph $G$ are defined as equivalence classes of rays (one sided infinite paths).  Two rays are said to belong to the same end if for every finite set $F$ of vertices the same component of $G\setminus F$ contains infinitely many vertices from both rays.  The ends of a graph are a tool to capture the \lq\lq large-scale\rq\rq\ structure of an infinite graphs. In particular if a graph has more than one end then the graph can be said to be \lq\lq tree-like\rq\rq.  
In \cite{zbMATH06486846}, Dunwoody and Kr\"on constructed so called {\em structure trees} to describe this \lq\lq tree-likeness\rq\rq\ of a graph with more than one end.  A structure tree is constructed from a nested $\Aut(G)$-invariant family of separations of the graph such that the action of $\Aut(G)$ on the family of separations gives an action of $\Aut(G)$ on the tree.  The work of Dunwoody and Kr\"on builds on the book \cite{zbMATH00041228} by Dicks and Dunwoody, see also a recent account of this theory in \cite{zbMATH06787734}.

{\em Tree-decompositions} (defined in Section~\ref{sec:treedecomp}) are 
very similar to structure trees; and do play a central role in Graph Minor Theory and are a standard tool in Graph Theory to describe tree-structure \cite{zbMATH05760492}. If the nested set of separations of a tree-decomposition is $\Aut(G)$-invariant then the tree-decomposition is also $\Aut(G)$-invariant and $\Aut(G)$ acts on the decomposition tree.  

Dunwoody and Kr\"on 
apply their construction to obtain a combinatorial proof of generalization of Stalling's theorem of groups with at least two 
ends. This method has multifarious other applications, as demonstrated by Hamann in  \cite{zbMATH06105480} and Hamann and Hundertmark in \cite{zbMATH06152566}.

However, for graphs with only a single end, such as the 2-dimensional grid, these structure trees and the related tree-decompositions may be trivial. 
Hence such a structural understanding of this class of graphs remains elusive. 
If the end of a one-ended graph has finite vertex degree, that is, 
there is no infinite set of pairwise vertex-disjoint rays belonging to that end, then Halin showed in 1965 \cite{zbMATH03212734} that there are tree-decompositions displaying 
the end. A precise definition can be found towards the end of Section~\ref{sec:treedecomp}, but essentially this means that the tree also only has one end and that end \lq\lq corresponds\rq\rq to the end of the graph is a precise way. Nevertheless, for these tree-decompositions to be of any use for applications as above, 
one needs them to have the additional property that they are invariant under the group of 
automorphisms. 
Unfortunately such tree-decompositions do not exist for all graphs in question, see Example~\ref{undom_nec} below. Note that in this example there is a vertex $v$ dominating the end, that is, for every ray in the end there are infinitely many paths connecting $v$ to the ray such that any two of these paths have only the vertex $v$ in common.
In this paper we construct such tree-decompositions if the end is not dominated.

\begin{thm}\label{main:intro}
Every one-ended graph whose end is undominated and has finite vertex degree has a tree-decomposition that displays its 
end and that is invariant under the group of automorphisms.
\end{thm}

A very simple example is shown in Figure~\ref{fig2}.

 This better structural understanding leads to applications similar to those for graphs with more than 
one end. Indeed, below we deduce from Theorem~\ref{main:intro} a conjecture of Halin from 2000, and 
answer a recent question of Boutin and Imrich. A further application was pointed out by 
Hamann.



\begin{figure}
\centering
%
%
\psscalebox{1.0 1.0} 
{
\begin{pspicture}(0,-2.6333702)(9.877313,2.6333702)
\psline[linecolor=black, linewidth=0.02](0.06899033,-2.5643797)(0.46899033,-2.5643797)
\psline[linecolor=black, linewidth=0.02](0.8689903,-2.5643797)(1.2689903,-2.5643797)
\psline[linecolor=black, linewidth=0.02](1.6689904,-2.5643797)(2.0689902,-2.5643797)
\psline[linecolor=black, linewidth=0.02](2.4689903,-2.5643797)(2.8689904,-2.5643797)
\psline[linecolor=black, linewidth=0.02](0.46899033,-1.9643795)(0.8689903,-1.9643795)
\psline[linecolor=black, linewidth=0.02](2.0689902,-1.9643795)(2.4689903,-1.9643795)
\psline[linecolor=black, linewidth=0.02](0.06899033,-2.5643797)(0.46899033,-1.9643795)
\psline[linecolor=black, linewidth=0.02](0.46899033,-2.5643797)(0.8689903,-1.9643795)
\psline[linecolor=black, linewidth=0.02](0.8689903,-2.5643797)(0.46899033,-1.9643795)
\psline[linecolor=black, linewidth=0.02](1.2689903,-2.5643797)(0.8689903,-1.9643795)
\psline[linecolor=black, linewidth=0.02](0.46899033,-1.9643795)(1.2689903,-1.1643796)
\psline[linecolor=black, linewidth=0.02](1.6689904,-2.5643797)(2.0689902,-1.9643795)
\psline[linecolor=black, linewidth=0.02](2.0689902,-2.5643797)(2.4689903,-1.9643795)
\psline[linecolor=black, linewidth=0.02](2.4689903,-2.5643797)(2.0689902,-1.9643795)
\psline[linecolor=black, linewidth=0.02](2.8689904,-2.5643797)(2.4689903,-1.9643795)
\psline[linecolor=black, linewidth=0.02](3.2689903,-2.5643797)(3.6689904,-2.5643797)
\psline[linecolor=black, linewidth=0.02](4.06899,-2.5643797)(4.4689903,-2.5643797)
\psline[linecolor=black, linewidth=0.02](4.8689904,-2.5643797)(5.2689905,-2.5643797)
\psline[linecolor=black, linewidth=0.02](5.66899,-2.5643797)(6.06899,-2.5643797)
\psline[linecolor=black, linewidth=0.02](3.6689904,-1.9643795)(4.06899,-1.9643795)
\psline[linecolor=black, linewidth=0.02](5.2689905,-1.9643795)(5.66899,-1.9643795)
\psline[linecolor=black, linewidth=0.02](1.2689903,-1.1643796)(1.6689904,-1.1643796)
\psline[linecolor=black, linewidth=0.02](4.4689903,-1.1643796)(4.8689904,-1.1643796)
\psline[linecolor=black, linewidth=0.02](2.8689904,-0.16437958)(3.2689903,-0.16437958)
\psline[linecolor=black, linewidth=0.02](0.8689903,-1.9643795)(1.6689904,-1.1643796)
\psline[linecolor=black, linewidth=0.02](1.6689904,-1.1643796)(2.4689903,-1.9643795)
\psline[linecolor=black, linewidth=0.02](2.0689902,-1.9643795)(1.2689903,-1.1643796)
\psline[linecolor=black, linewidth=0.02](1.2689903,-1.1643796)(2.8689904,-0.16437958)
\psline[linecolor=black, linewidth=0.02](1.6689904,-1.1643796)(3.2689903,-0.16437958)
\psline[linecolor=black, linewidth=0.02](3.2689903,-2.5643797)(3.6689904,-1.9643795)
\psline[linecolor=black, linewidth=0.02](3.6689904,-2.5643797)(4.06899,-1.9643795)
\psline[linecolor=black, linewidth=0.02](3.6689904,-1.9643795)(4.06899,-2.5643797)
\psline[linecolor=black, linewidth=0.02](4.06899,-1.9643795)(4.4689903,-2.5643797)
\psline[linecolor=black, linewidth=0.02](4.8689904,-2.5643797)(5.2689905,-1.9643795)
\psline[linecolor=black, linewidth=0.02](5.2689905,-2.5643797)(5.66899,-1.9643795)
\psline[linecolor=black, linewidth=0.02](5.2689905,-1.9643795)(5.66899,-2.5643797)
\psline[linecolor=black, linewidth=0.02](5.66899,-1.9643795)(6.06899,-2.5643797)
\psline[linecolor=black, linewidth=0.02](3.6689904,-1.9643795)(4.4689903,-1.1643796)
\psline[linecolor=black, linewidth=0.02](4.06899,-1.9643795)(4.8689904,-1.1643796)
\psline[linecolor=black, linewidth=0.02](4.4689903,-1.1643796)(5.2689905,-1.9643795)
\psline[linecolor=black, linewidth=0.02](4.8689904,-1.1643796)(5.66899,-1.9643795)
\psline[linecolor=black, linewidth=0.02](4.4689903,-1.1643796)(2.8689904,-0.16437958)
\psline[linecolor=black, linewidth=0.02](3.2689903,-0.16437958)(4.8689904,-1.1643796)
\psdots[linecolor=black, dotsize=0.14](0.06899033,-2.5643797)
\psdots[linecolor=black, dotsize=0.14](0.46899033,-2.5643797)
\psdots[linecolor=black, dotsize=0.14](0.8689903,-2.5643797)
\psdots[linecolor=black, dotsize=0.14](1.2689903,-2.5643797)
\psdots[linecolor=black, dotsize=0.14](0.46899033,-1.9643795)
\psdots[linecolor=black, dotsize=0.14](0.8689903,-1.9643795)
\psdots[linecolor=black, dotsize=0.14](1.2689903,-1.1643796)
\psdots[linecolor=black, dotsize=0.14](1.6689904,-1.1643796)
\psdots[linecolor=black, dotsize=0.14](2.0689902,-1.9643795)
\psdots[linecolor=black, dotsize=0.14](2.4689903,-1.9643795)
\psdots[linecolor=black, dotsize=0.14](1.6689904,-2.5643797)
\psdots[linecolor=black, dotsize=0.14](2.0689902,-2.5643797)
\psdots[linecolor=black, dotsize=0.14](2.4689903,-2.5643797)
\psdots[linecolor=black, dotsize=0.14](2.8689904,-2.5643797)
\psdots[linecolor=black, dotsize=0.14](3.2689903,-2.5643797)
\psdots[linecolor=black, dotsize=0.14](3.6689904,-2.5643797)
\psdots[linecolor=black, dotsize=0.14](3.6689904,-1.9643795)
\psdots[linecolor=black, dotsize=0.14](4.06899,-1.9643795)
\psdots[linecolor=black, dotsize=0.14](4.06899,-2.5643797)
\psdots[linecolor=black, dotsize=0.14](4.4689903,-2.5643797)
\psdots[linecolor=black, dotsize=0.14](4.8689904,-2.5643797)
\psdots[linecolor=black, dotsize=0.14](5.2689905,-1.9643795)
\psdots[linecolor=black, dotsize=0.14](5.66899,-1.9643795)
\psdots[linecolor=black, dotsize=0.14](5.2689905,-2.5643797)
\psdots[linecolor=black, dotsize=0.14](5.66899,-2.5643797)
\psdots[linecolor=black, dotsize=0.14](6.06899,-2.5643797)
\psdots[linecolor=black, dotsize=0.14](4.4689903,-1.1643796)
\psdots[linecolor=black, dotsize=0.14](4.8689904,-1.1643796)
\psdots[linecolor=black, dotsize=0.14](2.8689904,-0.16437958)
\psdots[linecolor=black, dotsize=0.14](3.2689903,-0.16437958)
\psline[linecolor=black, linewidth=0.02](2.8689904,-0.16437958)(5.8689904,1.0356205)
\psline[linecolor=black, linewidth=0.02](3.2689903,-0.16437958)(6.2689905,1.0356205)
\psdots[linecolor=black, dotsize=0.14](5.8689904,1.0356205)
\psdots[linecolor=black, dotsize=0.14](6.2689905,1.0356205)
\psline[linecolor=black, linewidth=0.02](5.8689904,1.0356205)(6.2689905,1.0356205)
\psline[linecolor=black, linewidth=0.02](5.8689904,1.0356205)(7.66899,2.0356205)
\psline[linecolor=black, linewidth=0.02](6.2689905,1.0356205)(8.068991,2.0356205)
\psline[linecolor=black, linewidth=0.02](8.068991,0.03562042)(8.46899,0.03562042)
\psdots[linecolor=black, dotsize=0.14](8.068991,0.03562042)
\psdots[linecolor=black, dotsize=0.14](8.46899,0.03562042)
\psline[linecolor=black, linewidth=0.02](5.8689904,1.0356205)(8.068991,0.03562042)
\psline[linecolor=black, linewidth=0.02](6.2689905,1.0356205)(8.46899,0.03562042)
\rput{27.645975}(1.9007502,-3.4538467){\psframe[linecolor=white, linewidth=0.02, fillstyle=solid, dimen=outer](8.46899,2.4356203)(7.4689903,1.8356204)}
\psdots[linecolor=black, dotsize=0.06](7.8689904,2.0356205)
\psdots[linecolor=black, dotsize=0.06](8.2689905,2.2356205)
\psdots[linecolor=black, dotsize=0.06](8.068991,2.1356204)
\psdots[linecolor=black, dotsize=0.14](7.06899,-1.1643796)
\psdots[linecolor=black, dotsize=0.14](7.4689903,-1.1643796)
\psdots[linecolor=black, dotsize=0.14](9.068991,-1.1643796)
\psdots[linecolor=black, dotsize=0.14](9.46899,-1.1643796)
\psline[linecolor=black, linewidth=0.02](7.06899,-1.1643796)(8.068991,0.03562042)
\psline[linecolor=black, linewidth=0.02](7.4689903,-1.1643796)(8.46899,0.03562042)
\psline[linecolor=black, linewidth=0.02](8.46899,0.03562042)(9.46899,-1.1643796)
\psline[linecolor=black, linewidth=0.02](9.068991,-1.1643796)(8.068991,0.03562042)
\psline[linecolor=black, linewidth=0.02, linestyle=dashed, dash=0.17638889cm 0.10583334cm](7.06899,-1.1643796)(6.66899,-1.7643796)
\psline[linecolor=black, linewidth=0.02, linestyle=dashed, dash=0.17638889cm 0.10583334cm](7.4689903,-1.1643796)(7.06899,-1.7643796)
\psline[linecolor=black, linewidth=0.02, linestyle=dashed, dash=0.17638889cm 0.10583334cm](7.06899,-1.1643796)(7.4689903,-1.7643796)
\psline[linecolor=black, linewidth=0.02, linestyle=dashed, dash=0.17638889cm 0.10583334cm](7.4689903,-1.1643796)(7.8689904,-1.7643796)
\psline[linecolor=black, linewidth=0.02](7.06899,-1.1643796)(7.4689903,-1.1643796)
\psline[linecolor=black, linewidth=0.02, linestyle=dashed, dash=0.17638889cm 0.10583334cm](9.068991,-1.1643796)(8.66899,-1.7643796)
\psline[linecolor=black, linewidth=0.02, linestyle=dashed, dash=0.17638889cm 0.10583334cm](9.46899,-1.1643796)(9.068991,-1.7643796)
\psline[linecolor=black, linewidth=0.02, linestyle=dashed, dash=0.17638889cm 0.10583334cm](9.068991,-1.1643796)(9.46899,-1.7643796)
\psline[linecolor=black, linewidth=0.02, linestyle=dashed, dash=0.17638889cm 0.10583334cm](9.46899,-1.1643796)(9.86899,-1.7643796)
\psline[linecolor=black, linewidth=0.02](9.068991,-1.1643796)(9.46899,-1.1643796)
\end{pspicture}
}
\caption{The product of the canopy tree with $K_1$. This graph has a tree decomposition whose decomposition tree is the canopy tree. }\label{fig2}
\end{figure}
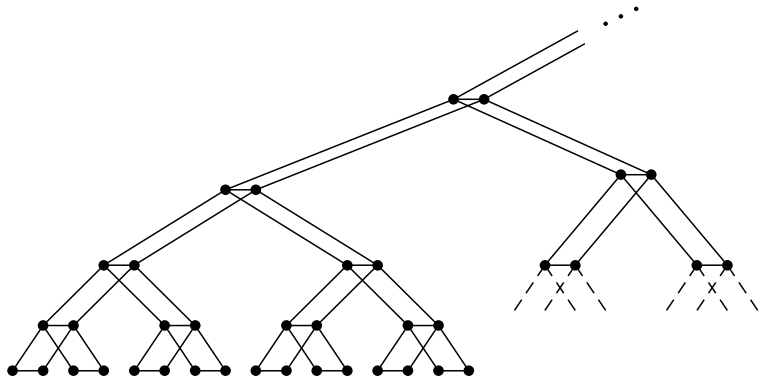

\vspace{.3cm}

{\bf Applications.}
In \cite{zbMATH01701677} Halin showed that one-ended graphs with vertex degree equal to one 
cannot have countably infinite automorphism group. Not completely satisfied with his result, he 
conjectured that this extends to one-ended graphs with finite vertex degree. Theorem 
\ref{main:intro} implies this conjecture.

\begin{thm}
\label{dichotomy}
Given a graph with one end which has finite vertex degree, its automorphism group is either finite 
or 
has at least $2^{\aleph_0}$ many elements.
\end{thm}

\vspace{.3cm}

Theorem~\ref{dichotomy} can be further applied to answer a question posed by Boutin and 
Imrich, who asked in~\cite{imrichboutin} whether there is a locally finite graph with only one end and linear growth and countably infinite automorphism group. Theorem~\ref{dichotomy} implies a negative answer to this question as well as strengthenings of further results of Boutin and Imrich, see Section~\ref{sec:dichotomy} for details.

\vspace{.3cm}

Finally, Matthias Hamann\footnote{personal communication} pointed out the following consequence of 
Theorem~\ref{main:intro}.

\begin{thm}\label{thm:enddegree}
The end of a transitive one-ended graph must have infinite vertex degree.
\end{thm}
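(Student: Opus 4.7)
Assume for contradiction that $G$ is a transitive one-ended graph whose end $\omega$ has finite vertex degree. The plan is to verify the hypotheses of Theorem~\ref{main:intro} and then exploit transitivity against the one-endedness of the resulting decomposition tree.

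First I show $\omega$ is undominated. If some vertex dominated $\omega$, vertex-transitivity would force \emph{every} vertex of $G$ to dominate $\omega$; then for any finite $F\subseteq V(G)$ and any $u\notin F$, all but finitely many of $u$'s dominating paths to $\omega$ avoid $F$, so $u$ remains in the $\omega$-component of $G\setminus F$. Hence $G\setminus F$ is connected, i.e.\ $G$ has no finite separator. A standard Menger-type argument then produces arbitrarily many internally disjoint $v$-rays in $\omega$ from any chosen $v$; their tails form pairwise disjoint rays in $\omega$, contradicting the assumed finite vertex degree. So $\omega$ is undominated, and Theorem~\ref{main:intro} yields an $\Aut(G)$-invariant tree-decomposition $(T,(V_t)_{t\in V(T)})$ with $T$ one-ended and displaying $\omega$. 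Let $\eta$ be the unique end of $T$ and write $s\preceq t$ when $s$ lies on the $T$-path from $t$ to $\eta$.

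The main obstacle is the following structural step: every orbit of the induced $\Aut(G)$-action on $V(T)$ is a $\preceq$-antichain. Each $\phi\in\Aut(G)$ preserves $\eta$ and hence preserves $\preceq$. If an orbit were not an antichain, some $\phi$ would send some $t$ to a strict descendant $\phi(t)$; iterating gives a strictly $\preceq$-increasing chain $t\prec\phi(t)\prec\phi^2(t)\prec\cdots$, whose $T$-paths joining consecutive elements form a nested family whose union is a ray in $T$ with end distinct from $\eta$, contradicting one-endedness of $T$.

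For each $v\in V(G)$ the set $T_v=\{t\in V(T):v\in V_t\}$ is a subtree of $T$; because $\omega$ is undominated, $T_v$ cannot approach $\eta$, so it has a unique $\preceq$-minimum $\tau(v)$ (the vertex of $T_v$ closest to $\eta$). The map $\tau\colon V(G)\to V(T)$ is $\Aut(G)$-equivariant, and by transitivity of $G$ its image $O=\tau(V(G))$ is a single $\Aut(G)$-orbit, hence a $\preceq$-antichain. To finish, pick any $t\in O$ and its unique $T$-neighbour $p$ with $p\prec t$. Connectedness of $G$ forces the finite adhesion $V_t\cap V_p$ to be non-empty; for any $u\in V_t\cap V_p$ we have $p\in T_u$, so $\tau(u)\preceq p\prec t$, exhibiting $\tau(u)$ and $t$ as distinct comparable members of the antichain $O$---the desired contradiction.
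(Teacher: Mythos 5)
Your route is genuinely different from the paper's. The paper proves the stronger quasi-transitive statement (Theorem~\ref{thm:oneend-qt-thick}): it reduces to the undominated case by deleting the dominating vertices and keeping the component containing $\omega$ (which preserves quasi-transitivity), transfers the group action to the decomposition tree via Theorem~\ref{thm:aut_G_inv_td} so that $T$ becomes a one-ended quasi-transitive tree, and then invokes Proposition~\ref{prp:no-1end-qt-tree} (no such tree exists, by contracting to an edge-transitive tree). You instead keep the action on $G$: you anchor each vertex $v$ at the node $\tau(v)$ of $T_v$ nearest the end $\eta$, note that any $\Aut(G)$-orbit in a one-ended tree is an antichain for the toward-the-end order (your translation-ray argument for this is correct), and then contradict the antichain property using a nonempty adhesion set. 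This buys a proof that never needs quasi-transitivity of $T$ nor Proposition~\ref{prp:no-1end-qt-tree}, but it uses transitivity of $G$ twice (in the undominated step and in making $\tau(V(G))$ a single orbit), so unlike the paper's argument it does not immediately give the quasi-transitive version. Your undominated step is fine in substance, but the "standard Menger-type argument" is exactly Lemma~\ref{lem:finite_dominating}: one dominating vertex plus transitivity gives infinitely many dominating vertices, contradicting that lemma directly; citing it would be cleaner.

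There is one real gap: the existence of $\tau(v)$. You justify "$T_v$ cannot approach $\eta$" solely by "$\omega$ is undominated", but that does not follow from the statement of Theorem~\ref{main:intro} used as a black box. A tree-decomposition of finite adhesion displaying an undominated end can perfectly well have a vertex lying in \emph{all} parts along a ray to $\eta$: take the canopy tree, a natural ray-shaped decomposition of it, and add one fixed leaf to every part --- adhesion stays finite, the end is still displayed and undominated, yet $T_v$ is the whole tree for that leaf. (Such a decomposition is of course not $\Aut(G)$-invariant, which is exactly the point: undominatedness alone is not the reason.) The step is true in your setting, but the correct justification uses invariance, transitivity and finite adhesion: if some $T_v$ contained a ray converging to $\eta$, then, since automorphisms fix $\eta$ and $G$ is transitive, every $T_u$ would contain such a ray; as $T$ is one-ended, all these rays share a tail with one fixed ray $t_1t_2\ldots$ of $T$, so every vertex of $G$ lies in $P_{t_n}$ for all large $n$, and the adhesion sets $P_{t_n}\cap P_{t_{n+1}}$ then have unbounded size, contradicting finite adhesion. (Alternatively, one can open the black box: in the construction of Theorem~\ref{main_td} the spine separations satisfy the exhaustion property of Theorem~\ref{Tsequence}, so no vertex stays in the separators forever.) With that patch, your argument is complete and correct.
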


This extends a result of Thomassen for locally finite graphs, see \cite[Proposition 5.6]{zbMATH00130612}.
We actually prove a stronger version of Theorem~\ref{thm:enddegree}, see Theorem~\ref{thm:oneend-qt-thick}, with 
`quasi-transitive'\footnote{Here a graph is \emph{quasi-transitive}, if there are 
only finitely many orbits of vertices under the automorphism group.} in place of `transitive'.

\vspace{.3cm}

The rest of this paper is structured as follows: in Section~\ref{sec:prelim} we set up all necessary 
notations and definitions. As explained in \cite{zbMATH06330529}, there is a close relation between 
tree-decompositions and 
nested sets of separations. In this paper we work mainly with nested sets of separations. 
In Section~\ref{sec:treedecomp} we prove Theorems~\ref{thm:aut_G_inv_td} and~\ref{main_td} that imply Theorem~\ref{main:intro}, and 
Section~\ref{sec:dichotomy} is devoted to the proof of Theorem~\ref{dichotomy}, and its implications on the work of Boutin and Imrich.
Finally, in Section~\ref{sec:qt-ends} we prove Theorem~\ref{thm:oneend-qt-thick} that implies Theorem~\ref{thm:enddegree}.  

Many of the lemmas we apply in this work were first proved by Halin. Since in some cases we need slight 
variants of the original results and also since Halin's original papers might not be easily 
accessible, proofs of some of these results are included in appendices.

\section{Preliminarlies}
\label{sec:prelim}

Throughout this paper $V(G)$ and $E(G)$ denote the sets of vertices and edges of a graph $G$, 
respectively. We refer to \cite{zbMATH05760492} for all graph theoretic notions which are not 
explicitly defined.

\subsection{Separations, rays and ends}

A \emph{separator} in a graph $G$ is a subset $S \subseteq V(G)$ such that $G-S$ is not connected. 
We say that a separator $S$ \emph{separates vertices $u$ and $v$} if $u$ and $v$ are in different 
components of $G-S$. Given two vertices $u$ and $v$, a separator $S$ separates $u$ and $v$ 
\emph{minimally} if 
it separates $u$ and $v$ and 
the components of $G-S$ containing $u$ and $v$ both have the whole of $S$ in their neighbourhood. 
The following lemma  can be found in Halin's 1965 paper {\cite[Statement 
2.4]{zbMATH00038800}, and also in his later paper \cite[Corollary 1]{zbMATH00089894} and then with a 
different proof.

\begin{lem}\label{halin}
Given vertices $u$ and $v$ and $k\in \Nbb$, there are only finitely many distinct separators of size 
at most $k$ separating $u$ and $v$ minimally.
\end{lem}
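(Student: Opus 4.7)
The plan is an iterative pigeonhole argument that builds a strictly growing ``common core'' shared by any infinite family of minimal separators, forcing a contradiction after finitely many steps.

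Suppose toward contradiction that the set of minimal $u$-$v$-separators of size at most $k$ is infinite. By pigeonhole on the size we may fix some $s \leq k$ and restrict to an infinite family $(S_i)_{i \in \Nbb}$ of pairwise distinct minimal $u$-$v$-separators, each of size exactly $s$. Set $I_0 := \emptyset$ and, inductively, pass to infinite subfamilies so as to obtain sets $I_0 \subsetneq I_1 \subsetneq \cdots$ with $I_j \subseteq S_i$ for every surviving index $i$.

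At stage $j$, if $I_j$ already separates $u$ from $v$ in $G$, then $I_j \subseteq S_i$ is itself a $u$-$v$-separator, so by the minimality of $S_i$ (which, as one verifies directly from the definition, forbids any proper subset of $S_i$ from separating $u$ and $v$) one obtains $I_j = S_i$, contradicting that infinitely many distinct $S_i$ remain in the current subfamily. Otherwise $G - I_j$ contains a $u$-$v$-path $R_j$, and $V(R_j)$ is finite. Every $S_i$ meets $R_j$ because $R_j$ is a $u$-$v$-path in $G$ and $S_i$ is a $u$-$v$-separator; since $R_j$ avoids $I_j$, the intersection $S_i \cap V(R_j)$ is a non-empty subset of the finite set $V(R_j) \setminus I_j$. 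Pigeonholing over the finitely many such subsets yields an infinite subfamily on which $S_i \cap V(R_j)$ equals some common non-empty set $J_j$, and we set $I_{j+1} := I_j \cup J_j$.

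Since $|J_j| \geq 1$ the sequence $(|I_j|)$ is strictly increasing, and it is bounded above by $s$, so after at most $s$ iterations the first (separating) case must occur and deliver the contradiction. Summing over $s \in \{0,1,\dots,k\}$ yields the stated finiteness. The one point to verify carefully is that the paper's notion of minimally separating (every vertex of $S$ having a neighbour in both the $u$- and the $v$-component of $G - S$) genuinely forbids proper subsets of $S$ from being $u$-$v$-separators; this is a short case analysis considering, for $w \in S \setminus T$ with $T \subsetneq S$ hypothetically separating, which component of $G - T$ contains $w$, and is the only nontrivial use of minimality in the argument.
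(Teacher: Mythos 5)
Your argument is correct. The paper itself does not prove this lemma at all -- it is quoted from Halin's papers (Statement~2.4 of the 1965 paper and Corollary~1 of the later one) -- so there is no in-paper proof to compare against; your write-up supplies a self-contained substitute. The two ingredients you rely on are both sound: (i) a separator that separates $u$ and $v$ minimally in the paper's sense is inclusion-minimal among $u$--$v$-separators, since for $T\subsetneq S$ and $w\in S\setminus T$ the vertex $w$ has neighbours in both the $u$- and the $v$-component of $G-S$, and these two components together with $w$ lie in one component of $G-T$, so $T$ cannot separate; and (ii) the iterated pigeonhole: every surviving separator meets the finite $u$--$v$-path $R_j$ avoiding the current core $I_j$, so an infinite subfamily agrees on a non-empty trace $J_j$, the core grows strictly, and it is bounded by $s\le k$, so after at most $s$ steps the core itself separates and inclusion-minimality collapses the infinite family to a single set, a contradiction. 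This is essentially the classical argument; Halin's original proof runs the same idea as an induction on $k$ (all minimal separators meet a fixed $u$--$v$-path, pigeonhole on the vertex of the path they share, then pass to $G$ minus that vertex), whereas you grow the common core directly and avoid the induction -- a cosmetic rather than substantive difference, and your version is arguably cleaner to check.
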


A \emph{separation} is a pair $(A,B)$ of subsets of $V(G)$ such that $A \cup B = V(G)$
 and there is no edge connecting $A \sm B$ to $B \sm A$.  This immediately implies that  if $u$ and 
$v$ are adjacent vertices in $G$ then $u$ and $v$ are both contained in either $A$ or $B$.  The 
sets $A$ 
and $B$ are called the \emph{sides} of the separation $(A,B)$. A separation $(A,B)$ is said to be 
\emph{proper} if both $A \sm B$ to $B \sm A$ are non-empty and then $A\cap B$ is a separator. 
A separation  $(A,B)$ is \emph{tight} if every vertex in $A\cap B$ has neighbours in both 
$A\setminus B$ and $B\setminus A$.
 The \emph{order} of a separation is the number of vertices in $A\cap B$. Throughout this paper we 
will only consider separations of finite order. 
The following is well-known. 

\begin{lem}\label{mod}{\rm (See \cite[Lemma 2.1]{zbMATH06427103})} Given any two separations $(A,B)$ 
and $(C,D)$  of $G$ then the sum of the orders of the separations $(A\cap C, B\cup D)$ and $(B\cap 
D, A\cup C)$ is equal to the sum of the orders of the separations $(A,B)$ and $(C,D)$.  In 
particular if the orders of $(A,B)$ and $(C,D)$ are both equal to $k$ then the sum of the orders of 
$(A\cap C, C\cup D)$ and $(B\cap D, A\cup C)$ is equal to $2k$.
\qed
\end{lem}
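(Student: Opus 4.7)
The plan is to establish the identity by a straightforward inclusion-exclusion on the intersections, exploiting the covering conditions $A \cup B = C \cup D = V(G)$ that come with both separations.

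First I would check the (routine) side condition that $(A \cap C, B \cup D)$ and $(B \cap D, A \cup C)$ really are separations. The union equalities are immediate from $A \cup B = C \cup D = V(G)$. For the absence of edges between the exclusive sides of $(A \cap C, B \cup D)$: if $uv$ were such an edge, then $u \in (A \setminus B) \cap (C \setminus D)$, and since $(A,B)$ is a separation $v$ cannot lie in $B \setminus A$, forcing $v \in A$; similarly $(C,D)$ being a separation forces $v \in C$; so $v \in A \cap C$, contradicting the assumption that $v$ lies outside $A \cap C$. The argument for $(B \cap D, A \cup C)$ is symmetric.

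For the order identity, I would expand the two orders by inclusion-exclusion as
\[
|(A \cap C) \cap (B \cup D)| = |A \cap B \cap C| + |A \cap C \cap D| - |A \cap B \cap C \cap D|
\]
and analogously for $|(B \cap D) \cap (A \cup C)|$. Adding the two gives
\[
|A \cap B \cap C| + |A \cap B \cap D| + |A \cap C \cap D| + |B \cap C \cap D| - 2|A \cap B \cap C \cap D|.
\]
Now the covering conditions come in: from $C \cup D = V(G)$ one gets $|A \cap B \cap C| + |A \cap B \cap D| = |A \cap B| + |A \cap B \cap C \cap D|$, and from $A \cup B = V(G)$ one gets $|A \cap C \cap D| + |B \cap C \cap D| = |C \cap D| + |A \cap B \cap C \cap D|$. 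Substituting, the two extra copies of $|A \cap B \cap C \cap D|$ cancel the term $-2|A \cap B \cap C \cap D|$, leaving exactly $|A \cap B| + |C \cap D|$, as required. The ``in particular'' clause is the obvious specialisation to $|A \cap B| = |C \cap D| = k$.

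There is no real obstacle here; the only care needed is to keep track of which covering condition is invoked at which step. Conceptually, the identity is just the equality case of the standard submodularity of the separation-order function, which turns into an equality precisely because the two new separations we consider are the two \emph{complementary corners} of the four candidate separations produced by intersecting $(A,B)$ and $(C,D)$.
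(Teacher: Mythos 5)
Your proof is correct and complete. The paper itself gives no proof of this lemma (it is stated with a pointer to Lemma~2.1 of the cited source and a \qed), and your inclusion--exclusion computation is exactly the standard double-counting argument behind that result: the order identity is purely set-theoretic, using only $A\cup B = C\cup D = V(G)$, while the graph structure enters only in your (correct, and strictly speaking optional for the order statement) verification that the two corner pairs are again separations.
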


The separations $(A,B)$ and $(C,D)$ are \emph{strongly nested} if $A \se C$ and $D \se B$.
They are \emph{nested} if they are strongly nested after possibly exchanging `$(A,B)$' by `$(B,A)$' 
or  `$(C,D)$' by `$(D,C)$'. That is, $(A,B)$ and $(C,D)$ are nested if one 
of the following holds:
\begin{itemize}
\item $A \se C$ and $D \se B$,
\item $A \se D$ and $C \se B$,
\item $B \se C$ and $D \se A$,
\item $B \se D$ and $C \se A$.
\end{itemize}
We say a set $\mathcal S$ of separations is \emph{nested}, if any two separations in it are nested.

A \emph{ray} in a graph $G$ is a one-sided infinite path $v_0, v_1, \ldots$  in $G$.   The sub-rays 
of a ray are called its \emph{tails}. 
Given a finite separator $S$ of $G$, there is for every ray $\gamma$ a unique component of $G-S$ 
that contains all but finitely many vertices of $\gamma$.
We say that $\gamma$ \emph{lies} in that component of $G-S$.  Given a separation $(A,B)$ of finite 
order one can similarly say that $\gamma$ lies in one of the sides of the separation.
Two rays are \emph{in the same end} if they lie in the same component of $G-S$ for every finite 
separator of $G$. Clearly, this is an equivalence relation.
An equivalence class is called a \emph{(vertex) end}\footnote{A notion related to `vertex ends' 
are `topological ends'. In this paper we are mostly interested in graphs where no vertex 
dominates a vertex end. In this context the two notions of end agree.}. An alternative way 
to define ends is to say that two rays $R_1$ and $R_2$ are in the same end if there are infinitely 
many pairwise disjoint $R_1-R_2$ paths.   (Given subsets $X$ and $Y$ of the vertex set, an $X-Y$ 
path is a path that has its initial vertex in $X$ and terminal vertex in $Y$ and every other vertex 
is neither in $X$ nor $Y$.  In the case where $X=\{x\}$ then we speak of $x-Y$ paths instead of 
$X-Y$ paths and if $Y=\{y\}$ we speak of $x-y$ paths.)
An end $\omega$ \emph{lies} in a component $C$ of $G-S$  if every ray that belongs to $\omega$ lies in $C$. Clearly, every end lies in a unique component of $G-S$ for every finite 
separator $S$ and if $(A,B)$ is a separation of finite order then an end either lies in $A$ or $B$. 

A vertex $v \in V(G)$ \emph{dominates} an end $\omega$ of $G$, if there is no separation $(A,B)$ of 
finite order such that $v \in A\setminus B$ and $\omega$ lies in $B$. Equivalently, $v$ dominates 
$\omega$ if for every ray $R$ in $\omega$ there are infinitely many paths connecting $v$ to $R$ such 
that any two of them only intersect in $v$.

The \emph{vertex degree} of an end $\omega$ is equal to a natural number $k$ if the maximal 
cardinality of a family of pairwise disjoint rays belonging to the end is $k$.  If no such number 
$k$ exists then we say that the vertex-degree of the end is infinite. 
Halin \cite{zbMATH03212734}  (see also \cite[Theorem~8.2.5]{zbMATH05760492}) proved that if the 
vertex-degree of an end is infinite then there is an infinite family of pairwise disjoint rays 
belonging to the end.  Ends with finite vertex degree are sometimes called \emph{thin} and those with infinite vertex degree are called \emph{thick}.

The following lemma is well-known.  A proof can be found in Appendix A.

\begin{lem}\label{lem:finite_dominating}{\rm (Cf.~\cite[Section 3]{zbMATH01701677})}
Let $G$ be a connected graph and $\omega$ an end of $G$ having a finite vertex degree.   Then there 
are only finitely many vertices in $G$ that dominate the end $\omega$.  
\end{lem}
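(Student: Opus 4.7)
The plan is to prove the lemma by contradiction. Suppose $k$ is the finite vertex degree of $\omega$ and that at least $k+1$ vertices dominate $\omega$; I pick $k+1$ of them, say $v_1,\ldots,v_{k+1}$. I will construct $k+1$ pairwise disjoint rays of $G$ belonging to $\omega$, contradicting the definition of $k$.

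The main tool is an auxiliary graph $G^+$ obtained from $G$ by adjoining one new vertex $v^*$ whose only neighbors are $v_1,\ldots,v_{k+1}$. Since $v^*$ has finite degree, $\omega$ remains an end of $G^+$. I claim that the minimum size of a finite $v^*$-$\omega$ separator in $G^+$ is exactly $k+1$. The set $\{v_1,\ldots,v_{k+1}\}$ is visibly such a separator. Conversely, if $S\subseteq V(G^+)$ is finite with $|S|\le k$ and $v^*\notin S$, then some $v_i\notin S$; because $v_i$ dominates $\omega$ in $G$ (hence also in $G^+$), $v_i$ lies in the $\omega$-component of $G^+\setminus S$, and the edge $v^*v_i$ then places $v^*$ in the same component. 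Thus $v^*$ does not dominate $\omega$ in $G^+$, and the minimum finite $v^*$-$\omega$ separator has size exactly $k+1$.

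Next I apply the Menger-type theorem for ends (essentially Halin's 1965 result): if a vertex $v$ does not dominate an end $\epsilon$, then the maximum number of rays from $v$ belonging to $\epsilon$ that are pairwise disjoint apart from $v$ equals the minimum size of a finite $v$-$\epsilon$ separator. Applied to $v^*$ and $\omega$ in $G^+$, this yields $k+1$ rays from $v^*$ in $\omega$, pairwise disjoint apart from $v^*$. Their second vertices are $k+1$ distinct neighbors of $v^*$, so they exhaust $\{v_1,\ldots,v_{k+1}\}$. Deleting $v^*$ from each ray produces $k+1$ pairwise disjoint rays in $G$ belonging to $\omega$, contradicting the assumption that $\omega$ has vertex degree $k$.

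The main obstacle is the use of the Menger-type theorem for ends. If one does not quote it, the required special case must be proved directly. The standard route is: fix a ray $R=r_0,r_1,\ldots$ in $\omega$ and apply the finite Menger theorem to obtain, for every sufficiently large $n$, a family of $k+1$ pairwise internally disjoint $v^*$-$r_n$ paths in $G^+$ (the same argument as above shows that any separator of size at most $k$ fails to separate $v^*$ from $r_n$ once $r_n$ is deep enough in $\omega$). Invoking \autoref{halin} to control the finitely many minimal $v^*$-$r_n$ separators of size at most $k+1$, a compactness/diagonal argument extracts a limit system of $k+1$ pairwise disjoint rays from $v^*$ in $\omega$. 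This compactness step is the technical heart of the proof and is precisely the mechanism by which Halin originally established the max-rays / min-separator duality.
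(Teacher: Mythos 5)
There is a genuine gap, and in fact your intermediate goal is false. You aim to show that at most $k$ vertices dominate $\omega$, where $k$ is the vertex degree of $\omega$; but this is not true. Take a ray $R=r_0r_1r_2\ldots$ and add two new vertices $u,w$, each joined to every vertex of $R$. Every ray in this graph eventually runs along a tail of $R$ (it can visit $u,w$ at most once each), so no two disjoint rays exist and the unique end has vertex degree $k=1$; yet both $u$ and $w$ dominate it. The same construction with $m$ such vertices gives vertex degree $1$ and $m$ dominating vertices. Consequently the ``Menger-type theorem for ends'' you invoke --- that for a non-dominating vertex $v$ the maximum number of rays from $v$ in $\omega$, pairwise disjoint apart from $v$, equals the minimum size of a finite $v$--$\omega$ separator --- is false as stated: in the graph above with your auxiliary vertex $v^*$ joined to $u$ and $w$, the minimum $v^*$--$\omega$ separator has size $2$ (your argument for this is fine), but any two rays from $v^*$ must share a tail of $R$, so the maximum ray-fan has size $1$. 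What Menger-type arguments actually give you is a duality between separators and systems of \emph{paths} from $v^*$ to a ray (or to deeper and deeper vertices $r_n$), and disjoint $v^*$--$R$ paths cannot in general be upgraded to disjoint rays: that upgrade is exactly what a thin end forbids. Your compactness fallback breaks at the same point --- the limit of the fans of $k+1$ internally disjoint $v^*$--$r_n$ paths witnesses only that $v^*$ dominates (a path-fan), not $k+1$ disjoint rays; Halin's limit extraction of disjoint rays (as in Theorem~\ref{thm:menger-like}) needs path systems running between a sequence of nested separators, not fans out of a single vertex.

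The lemma itself needs a different argument, and the paper's is of a different shape: if infinitely many vertices $x_1,x_2,\ldots$ dominated $\omega$, one links them pairwise by internally disjoint paths routed through far-away segments of a fixed ray $R\in\omega$, building a subdivision of the complete graph on infinitely many vertices inside $G$; such a subdivided $K_{\aleph_0}$ contains infinitely many pairwise disjoint rays, all in $\omega$, contradicting the assumption that $\omega$ has finite vertex degree. Note that this only contradicts \emph{infinitely} many dominating vertices, which is all the lemma claims; no bound of the form ``at most $k$'' can hold.
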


In this paper we are focusing on 1-ended graphs where the end $\omega$ has vertex degree $k$.  In 
the following definition we pick out a class of separations that are relevant in this case.

\begin{dfn}
Let $G$ be an arbitrary graph.  If  $\omega$ is an end of $G$ that has vertex degree $k$ then say 
that a separation $(A,B)$ is \emph{ $\omega$-relevant} if it has the following properties
\begin{itemize}
\item the order of $(A,B)$ is exactly $k$,
\item $A \sm B$ is connected,
\item every vertex in $A\cap B$ has a neighbour in $A \sm B$,
\item $\omega$ lies in $B$, and 
\item there is no separation $(C,D)$ of order $<k$ such that $A\se C$ and $\omega$ lies in $D$.
\end{itemize} 
Define $\Srel$ as the set of all $\omega$-relevant separations. 
\end{dfn}

 The following characterization of $\omega$-relevant separations is a Menger type result.  
A proof based on \cite{zbMATH03205933}  and \cite{zbMATH03212734} is contained in Appendix A.

\begin{lem}\label{Lrelevant}
Let $G$ be an arbitrary graph.  Suppose  $\omega$ is an end of $G$ with vertex degree $k$.  
\begin{enumerate}
\item  If $(A,B)$ is an $\omega$-relevant separation then there is a family of $k$ pairwise disjoint 
rays in $\omega$ such that each of them has its initial vertex in $A\cap B$.  

\item Conversely, if $(A,B)$ is a separation of order $k$ such that $A\sm B$ is connected, every 
vertex in $A\cap B$ has a neighbour in $A\sm B$, the end $\omega$ lies in $B$ and there is a family 
of $k$ disjoint rays in $\omega$ such that each of these rays has its initial vertex in $A\cap B$ 
then the separation $(A,B)$ is $\omega$-relevant.
\end{enumerate}
\end{lem}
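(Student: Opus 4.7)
The plan is to handle the two implications separately, treating the converse direction first since it is almost immediate from the definition.

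For part 2, I would verify the fifth clause of the definition of $\omega$-relevance; the other four are directly assumed. Suppose for contradiction that some separation $(C,D)$ of order strictly less than $k$ has $A\se C$ and $\omega$ lying in $D$. Then the given family of $k$ pairwise disjoint rays in $\omega$ has all initial vertices in $A\cap B\se A\se C$, while each ray has a tail in the component of $G-(C\cap D)$ that contains the rays of $\omega$. Each such ray must therefore traverse the separator $C\cap D$, and pairwise disjointness forces $|C\cap D|\ge k$, contradicting $|C\cap D|<k$.

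For part 1, I would exploit the minimality condition to invoke an infinite Menger-type theorem. The fifth clause of $\omega$-relevance is equivalent to the statement that no finite vertex set of size less than $k$ separates $A$ from $\omega$ in $G$. Halin's Menger-type results for ends in \cite{zbMATH03205933} and \cite{zbMATH03212734}, together with the fact that $\omega$ has finite vertex degree $k$, should then furnish $k$ pairwise disjoint rays $R_1,\dots,R_k$ belonging to $\omega$ with each initial vertex in $A$. To move each initial vertex into $A\cap B$, I would observe that since $\omega$ lies in $B$, each $R_i$ has a tail in $B\sm A$; combined with $R_i$ starting in $A$, this forces $R_i$ to cross the separator $A\cap B$. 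Because $|A\cap B|=k$ is finite and rays do not repeat vertices, $R_i$ has a last vertex $v_i$ in $A\cap B$, and the tail of $R_i$ from $v_i$ is a ray in $\omega$ starting in $A\cap B$. Disjointness of the $R_i$ makes the $v_i$ distinct, so the $k$ tails have $k$ distinct initial vertices in the $k$-element set $A\cap B$, yielding the required family.

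The main obstacle is the infinite Menger-type step in part 1: extracting a full $k$-family of pairwise disjoint rays from a prescribed vertex set into a prescribed end, as opposed to ordinary disjoint paths to finite targets. The rest of the argument is essentially bookkeeping: rays from $A$ must traverse $A\cap B$ because $\omega$ lies on the other side of this finite separator, and truncation at the last such visit produces rays of the prescribed form.
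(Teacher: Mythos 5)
Your argument matches the paper's: part 2 is exactly the paper's disjointness count against the separator $C\cap D$, and part 1 rests, as in the paper, on Halin's Menger-type results (developed in the paper's Appendix A) to produce the $k$ disjoint rays, with your truncation at the last visit to $A\cap B$ just making explicit the bookkeeping the paper leaves implicit in ``follows directly from the above''. So the proposal is correct and takes essentially the same route.
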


In particular, for $(A,B) \in \Srel$ the component of $G-(A\cap B)$ in which $\omega$ lies has the 
whole of $A\cap B$ in its neighbourhood and hence every separation in $\Srel$ is tight. 
Note that the set $A\sm B$ completely determines the $\omega$-relevant separation $(A,B)$. 

The relation
\[
(A,B)\leq (C,D) :\iff A \se C  \mbox{ and } B\supseteq D
\]
defines a partial order on the set of all separations, so in particular on the set $\Srel$.  Since 
$(C,D)$ is a tight separation, the condition 
$A\se C$ implies that $D\se B$.   This is shown in \cite[(7) on p.~17]{zbMATH06330529} and the 
argument goes as follows:  Suppose that $D\not\se B$ and $x\in D\sm B$.  Then  $x\in A\se C$ so 
$x\in (C\cap D)\setminus B$.  Because $(C, D)$ is a tight separation, $x$ has a neighbour $y\in 
D\setminus C$.  But $x\in A\setminus B$ and hence $y$ must also be in $A$.  But $y\not\in C$, 
contradicting the assumption that $A\se C$.  Hence $D\subseteq B$ and  $(A,B)\leq (C,D) \iff A \se 
C$.   

The next result follow from results of Halin in \cite{zbMATH03212734}.  These results are in turn 
proved by using Menger's Theorem.  For the convenience of the reader a detailed proof is provided in 
Appendix A.   

\begin{thm}\label{Tsequence}   Let $G$ be a connected 1-ended graph such that the end $\omega$ is 
undominated and has finite vertex degree $k$.  
  Then there is a sequence $\{(A_n, B_n)\}_{n\geq 0}$ of $\omega$-relevant separations, such that 
the sequence of sets $B_n$ is strictly decreasing and for every finite set of vertices $F$ there is 
a number $n$ such that $F\subseteq A_n\sm B_n$.
\end{thm}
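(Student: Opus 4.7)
Proof plan: The strategy is to build the sequence by recursion on $n$, at each step enlarging the ``inside'' of the $\omega$-relevant separation so as to absorb a new vertex of $V(G)$, while preserving the invariant that all previously absorbed vertices lie in the strict interior $A_n \setminus B_n$. The main input is a Menger-type lemma, following Halin's work in \cite{zbMATH03205933,zbMATH03212734}: for every finite connected subgraph $H \subseteq V(G)$ that contains the initial vertices of a family of $k$ pairwise disjoint rays of $\omega$, there is a separator $S$ of $H$ from $\omega$ of size exactly $k$ with $S \cap H = \emptyset$. The lower bound $|S| \geq k$ is immediate from the $k$ prescribed rays; the upper bound $|S| \leq k$ follows because any family of internally disjoint $H$--$\omega$ paths extends to a family of pairwise disjoint rays in $\omega$, which has size at most $k$ by the vertex-degree assumption.

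Fix $k$ pairwise disjoint rays $R_1, \ldots, R_k$ of $\omega$ with initial vertices $x_1, \ldots, x_k$, and enumerate $V(G) = \{w_0, w_1, \ldots\}$. For the base case, pick a finite connected subgraph $H_0 \supseteq \{x_1, \ldots, x_k, w_0\}$, apply the Menger-type lemma to obtain a size-$k$ separator $S_0$ of $H_0$ from $\omega$ with $S_0 \cap H_0 = \emptyset$, and let $C_0$ be the component of $G - S_0$ containing $H_0$. Then $(A_0, B_0) := (C_0 \cup S_0,\ V(G) \setminus C_0)$ lies in $\Srel$: the order is $k$; $A_0 \setminus B_0 = C_0$ is connected; every vertex of $S_0$ sits on one of the $k$ disjoint $H_0$--$\omega$ paths realising Menger's theorem and hence has a neighbour in $C_0$; $\omega \in B_0$; and no separator of order $<k$ can separate $\omega$ from $A_0$, since any such separator would have to block the $k$ rays.

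For the inductive step, assume $(A_n, B_n) \in \Srel$ has been constructed with $\{w_0, \ldots, w_n\} \subseteq A_n \setminus B_n$. Write $A_n \cap B_n = \{y_1, \ldots, y_k\}$; tightness provides a neighbour $y_i^+ \in B_n \setminus A_n$ of each $y_i$, and by \autoref{Lrelevant}(1) we may take the $k$ pairwise disjoint rays of $\omega$ to start at $y_1, \ldots, y_k$ and lie entirely in $B_n$. Let $H_{n+1}$ be a finite connected subgraph of $G[B_n]$ containing $\{y_1, \ldots, y_k, y_1^+, \ldots, y_k^+\}$ together with (if $w_{n+1} \in B_n$) the vertex $w_{n+1}$ and a path connecting it to the rest. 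Applying the Menger-type lemma inside $G[B_n]$ yields a size-$k$ separator $S_{n+1} \subseteq B_n$ of $H_{n+1}$ from $\omega$ with $S_{n+1} \cap H_{n+1} = \emptyset$. Let $C_{n+1}$ be the component of $G - S_{n+1}$ containing $A_n \setminus B_n$; since each $y_i \in H_{n+1}$ is adjacent to $A_n \setminus B_n$, we have $y_i \in C_{n+1}$, and hence $H_{n+1} \subseteq C_{n+1}$ by connectedness. Set $(A_{n+1}, B_{n+1}) := (C_{n+1} \cup S_{n+1},\ V(G) \setminus C_{n+1})$. The verification that $(A_{n+1}, B_{n+1}) \in \Srel$ is identical to the base case; any $y_i^+$ witnesses $A_n \subsetneq A_{n+1}$; and since $S_{n+1} \subseteq B_n$ is disjoint from $A_n \setminus B_n$, we have $\{w_0, \ldots, w_n\} \subseteq A_n \setminus B_n \subseteq C_{n+1}$, while $w_{n+1} \in H_{n+1} \setminus S_{n+1} \subseteq C_{n+1}$, so the invariant $\{w_0, \ldots, w_{n+1}\} \subseteq A_{n+1} \setminus B_{n+1}$ is maintained.

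The main obstacle I anticipate is pinning down the precise form of the Menger-type lemma for separating a finite set from an end --- requiring both the size bound $k$ and disjointness of the separator from the prescribed finite set --- and then verifying the somewhat finicky claim that the resulting component $C_{n+1}$ of $G - S_{n+1}$ contains the whole of $H_{n+1}$ together with $A_n \setminus B_n$. Both of these hinge on the interplay between tightness of $\omega$-relevant separations (providing the ``ahead'' neighbours $y_i^+$ and guaranteeing that $A_n \setminus B_n$ links to every $y_i$) and the vertex-degree bound on $\omega$ (providing the crucial upper bound of $k$ on disjoint families of paths into the end).
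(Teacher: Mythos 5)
Your recursive scheme (absorb one new vertex at each step, keep the invariant $\{w_0,\dots,w_n\}\subseteq A_n\sm B_n$, verify $\omega$-relevance from the $k$ disjoint rays) is sound in outline, but it rests entirely on the ``Menger-type lemma'' you state at the start, and that lemma is exactly the nontrivial content of the theorem --- and your one-line justification of it does not work. You argue that a separator of size at most $k$ exists ``because any family of internally disjoint $H$--$\omega$ paths extends to a family of pairwise disjoint rays in $\omega$, which has size at most $k$''. That only bounds the max-flow side; to conclude that a \emph{small separator exists} you need the min-cut/max-flow duality for separating a finite set from an end in an infinite graph, which is a genuine theorem of Halin and not a formal consequence of the bound on disjoint rays. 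Indeed, your argument nowhere uses that $\omega$ is undominated, and without that hypothesis the lemma is false: in the graph of Example~\ref{undom_nec} (canopy tree plus a vertex $d$ joined to all leaves) the end has vertex degree $k=1$, yet every finite separator separating a fixed spine vertex from the end must contain $d$ and a second vertex, so no separator of size $1$ exists. So the inference ``at most $k$ disjoint rays $\Rightarrow$ a separator of size $k$'' is invalid as stated, and the missing proof is the heart of the matter.

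For comparison, the paper obtains the statement from Theorem~\ref{Tchainshaped} and Theorem~\ref{thm:menger-like}: first a compactness/Menger argument (finite Menger in auxiliary finite graphs $H_{ij}$ plus a K\"onig's Infinity Lemma argument) produces, \emph{for locally finite} graphs, a sequence of disjoint separators of size exactly $k$ that every ray eventually meets; then the general undominated case is reduced to the locally finite one by showing that the boundaries of the components of $G-R$ (for $R$ the union of the $k$ disjoint rays) are finite --- this is where undominatedness and the finite vertex degree are used --- and replacing each component by a locally finite connected subgraph. The desired sequence $(A_n,B_n)$ is then read off from these separators all at once, rather than built one vertex at a time. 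If you want to keep your inductive construction, you would still have to prove your lemma by essentially this route (or some other genuine compactness argument applied inside $G[B_n]$, again using undominatedness); as written, the proposal has a gap precisely at the step your last paragraph flags as ``pinning down the precise form of the Menger-type lemma''.
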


We will not use the following in our proof. 

\begin{rem}
Theorem~\ref{Tsequence} is also true if we leave out the assumption that $G$ is one-ended (and replace `the end $\omega$' by `there exists an end $\omega$ that').
\end{rem}

\subsection{Automorphism groups}\label{sec:autogroup}

An \emph{automorphism} of a graph $G = (V,E)$ is a bijective function $\gamma 
: V \to V$ that preserves adjacency and whose inverse also preserves adjacency. Clearly an automorphism $\gamma$ also induces a 
bijection $E \to E$ which by abuse of notation we will also call $\gamma$. The \emph{automorphism 
group of $G$}, i.e.\ the group of all automorphisms of $G$, will be denoted by $\Aut(G)$.

Let $\Gamma$ be a subgroup of $\Aut(G)$.  For a set $D\subseteq V(G)$ we define the \emph{setwise 
stabiliser} of $D$ as the subgroup $\Gamma_{\{D\}}=\{\gamma\in \Gamma\mid \gamma(D)=D\}$  and the 
\emph{pointwise stabiliser} of $D$ is defined as $\Gamma_{(D)}=\{\gamma\in \Gamma\mid 
\gamma(d)=d\mbox{ for all } d\in D\}$.  The setwise stabiliser is the subgroup of all elements in 
$\Gamma$ that leave the set $D$ invariant and the pointwise stabiliser is the subgroup of all those 
elements in $\Gamma$ that fix every vertex in $D$.  If $D\subseteq V(G)$ is invariant under $\Gamma$ 
then we use $\Gamma^D$ to denote the permutation group on $D$ induced by $\Gamma$, i.e.~$\Gamma^D$ 
is the group of all permutation $\sigma$ of $D$ such that there is some element $\gamma\in \Gamma$ 
such that the restriction of $\gamma$ to $D$ is equal to $\sigma$.  Note that $\Gamma_{(D)}$ is a 
normal subgroup of $\Gamma_{\{D\}}$ and the index $\Gamma_{(D)}$ in $\Gamma_{\{D\}}$ is equal to the 
number of elements in $(\Gamma_{\{D\}})^D$.

The full automorphism group of a graph has a special property relating to separations.  Suppose 
$\gamma$ is an automorphism of a graph $G$ and that $\gamma$ leaves both sides of a separation 
$(A,B)$ invariant and fixes every vertex in the separator $A\cap B$.  Then  the full automorphism 
group contains automorphisms $\sigma_A$ and $\sigma_B$ such that $\sigma_A$ like $\gamma$ on $A$ 
fixes every vertex in $B$ and \emph{vice versa} for $\sigma_B$.  Informally one can describe this 
property by saying that the pointwise stabiliser (in the full automorphism group) of a set $D$ of 
vertices acts indpendently on the components of $G-D$.   We will refer to this property as \emph{the 
independence property}.

There is a natural topology on $\Aut(G)$, called the \emph{permutation topology}: endow the vertex 
set with the discrete topology and consider the topology of pointwise convergence on $\Aut(G)$. 
Clearly, the permutation topology also makes sense for any group of permutations of a set. The 
following lemma is a special case of a result in \cite[(2.6) on p.~28]{zbMATH00044603}.  In 
particular it tells us that the limit of a sequence of automorphisms again is an automorphism. This 
fact will be central to the proof of Theorem~\ref{dichotomy}.
\begin{lem}
\label{aut-closed}
The automorphism group of a graph is closed in the set of all permutations of the vertex set endowed 
with the topology of pointwise convergence.
\end{lem}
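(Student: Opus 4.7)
The plan is to show that if $\pi \in \Sym(V(G))$ lies in the closure of $\Aut(G)$ with respect to the topology of pointwise convergence, then $\pi$ itself is an automorphism of $G$. Since $\pi$ is already a bijection on $V(G)$, it suffices to verify that $\pi$ preserves both adjacency and non-adjacency, i.e., that $uv \in E(G) \iff \pi(u)\pi(v) \in E(G)$ for every pair $u,v \in V(G)$.

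To do this, I fix an arbitrary pair $u,v \in V(G)$ and consider the basic open neighbourhood
\[
U_{u,v}(\pi) \;=\; \{\sigma \in \Sym(V(G)) : \sigma(u)=\pi(u),\ \sigma(v)=\pi(v)\}
\]
of $\pi$ in the pointwise-convergence topology. Because $\pi$ is in the closure of $\Aut(G)$, the set $U_{u,v}(\pi) \cap \Aut(G)$ is non-empty; pick any $\gamma$ in it. Then $\gamma$ is an automorphism agreeing with $\pi$ on $\{u,v\}$, so
\[
uv \in E(G) \iff \gamma(u)\gamma(v) \in E(G) \iff \pi(u)\pi(v) \in E(G).
\]
Since $u,v$ were arbitrary, $\pi$ preserves and reflects adjacency, so $\pi$ and $\pi^{-1}$ both send edges to edges. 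Hence $\pi \in \Aut(G)$, proving closedness.

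There is really no substantial obstacle here: the argument reduces the global question (closure in a function space) to a local test on arbitrary pairs of vertices, which is handled by the defining property of the pointwise-convergence topology together with the fact that adjacency is a binary relation. The only thing to be a little careful about is to check both directions of the equivalence $uv \in E(G) \iff \pi(u)\pi(v) \in E(G)$, which however follows automatically because each approximating $\gamma$ is a full automorphism, not merely a homomorphism.
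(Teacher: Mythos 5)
Your proof is correct and complete: since the ambient space is explicitly the set of permutations, it suffices to check that a permutation $\pi$ in the closure preserves and reflects adjacency, and your two-point neighbourhood argument does exactly that. The paper itself gives no proof of this lemma (it simply cites Cameron's book), and your argument is the standard one that underlies that reference, so there is nothing to add.
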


The next result is also a special case of a result from Cameron's book refered to above.  This time 
we look at \cite[(2.2) on p.~28]{zbMATH00044603}.
 
\begin{lem}
\label{uncountable}
The automorphism group of a countable graph is finite, countably infinite or has at least 
$2^{\aleph_0}$ elements.
\end{lem}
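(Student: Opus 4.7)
The plan is to identify $\Aut(G)$ with a closed subset of a Polish space and then use a classical ramification argument to produce a perfect binary subtree of partial automorphisms whenever $\Aut(G)$ is uncountable. In detail, enumerate $V(G) = \{v_0, v_1, \ldots\}$ and consider the map $\phi \colon \Aut(G) \to V(G)^{\mathbb N}$ sending $\gamma$ to the sequence $(\gamma(v_0), \gamma(v_1), \ldots)$. This map is injective, and by \autoref{aut-closed} its image is a closed subset of $V(G)^{\mathbb N}$ (endowed with the product of discrete topologies). Since $V(G)$ is countable, the space $V(G)^{\mathbb N}$ is a Polish space, and the image of $\phi$ is determined by a tree $T$ whose nodes at level $n$ are the sequences $(\gamma(v_0), \ldots, \gamma(v_{n-1}))$ occurring for some $\gamma \in \Aut(G)$; the elements of $\Aut(G)$ correspond bijectively to the infinite branches of $T$.

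Assuming now that $\Aut(G)$ is uncountable, the plan is to build by recursion a complete binary subtree $T' \subseteq T$, so that $|\Aut(G)| \ge 2^{\aleph_0}$. Call a node $t \in T$ \emph{big} if the subtree of $T$ rooted at $t$ has uncountably many branches, and \emph{small} otherwise. The root is big by assumption. The key observation is that along any big node one can find a further big node (possibly itself) with at least two big children: if, starting from a big node $t$, every big descendant had exactly one big child, then $T$ below $t$ would decompose as a single distinguished ray together with countably many small subtrees hanging off it, giving only $\aleph_0 + \aleph_0 \cdot \aleph_0 = \aleph_0$ branches through $t$, contradicting bigness.

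With this observation in hand, I would construct an injection from $\{0,1\}^{\mathbb N}$ into the branches of $T$ by recursion: for each finite binary string $s$, choose a big node $t_s$ in $T$ such that $t_s$ extends $t_{s'}$ whenever $s$ extends $s'$, and such that $t_{s0}$ and $t_{s1}$ are incomparable extensions of $t_s$ arising as distinct big descendants of the first big ramification point below $t_s$. Every infinite binary sequence $\sigma \in \{0,1\}^{\mathbb N}$ then determines an infinite branch of $T$, which by closedness corresponds to a unique automorphism in $\Aut(G)$; distinct $\sigma$ give distinct branches and hence distinct automorphisms, yielding $|\Aut(G)| \ge 2^{\aleph_0}$.

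The main obstacle is the ramification argument, namely verifying that we never get stuck in a "single big path" situation when looking for a branching. This is handled by the counting observation above, which is essentially the classical perfect set theorem for closed subsets of a Polish space; once this is in place the rest of the argument is a routine recursive construction.
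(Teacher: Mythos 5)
Your overall strategy (a perfect-set/ramification argument for a suitable closed set of sequences) is the standard route to this fact, which the paper itself does not prove but cites from Cameron's book; however, as written there is a genuine gap at the very last step. Lemma~\ref{aut-closed} says that $\Aut(G)$ is closed in the group of \emph{permutations} of $V(G)$ with the pointwise topology; it does \emph{not} say that the image of your map $\phi\colon \gamma\mapsto(\gamma(v_0),\gamma(v_1),\dots)$ is closed in $V(G)^{\mathbb N}$, and in general it is not. A pointwise limit of automorphisms, taken in $V(G)^{\mathbb N}$, is an injective map preserving adjacency and non-adjacency, but it need not be surjective: for the edgeless graph on $\{v_0,v_1,\dots\}$ the shift $v_i\mapsto v_{i+1}$ is the pointwise limit of the cyclic permutations $(v_0\,v_1\cdots v_n)$, yet it is not an automorphism. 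Consequently the branches of your tree $T$ correspond to elements of the closure of $\phi(\Aut(G))$ in $V(G)^{\mathbb N}$, not to automorphisms, and your perfect binary subtree only shows that this closure has $2^{\aleph_0}$ elements. The sentence ``which by closedness corresponds to a unique automorphism in $\Aut(G)$'' is exactly the unjustified step; nothing in the construction guarantees that the limit maps are onto. (The ramification/counting argument itself --- every big node has a big descendant with two incomparable big extensions --- is fine.)

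The gap is repairable by standard means, but some repair is needed. One fix is to encode inverses as well: embed $\Aut(G)$ into $V(G)^{\mathbb N}\times V(G)^{\mathbb N}$ via $\gamma\mapsto(\gamma,\gamma^{-1})$; the image of this map \emph{is} closed (a limit $(f,g)$ satisfies $g\circ f=f\circ g=\mathrm{id}$ on all of $V(G)$, so $f$ is a bijection, and then Lemma~\ref{aut-closed} applies), and your tree argument goes through verbatim for the tree of finite initial segments of these interleaved sequences. Alternatively, keep your tree but define ``big'' via the number of automorphisms extending a node and interleave surjectivity steps into the recursion: given a big node $t$ and a target vertex $v_n$, each of the uncountably many automorphisms extending $t$ hits $v_n$, so by pigeonhole over the countably many possible preimages some big extension of $t$ pins down a $k$ with $\gamma(v_k)=v_n$; dovetailing these steps with the splitting steps forces every branch of the resulting binary subtree to be a surjective, hence genuine, automorphism. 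With either patch your proof is correct and is essentially the argument behind the result the paper quotes.
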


\section{Invariant nested sets}
\label{sec:treedecomp}

In this section we will prove Theorems~\ref{thm:aut_G_inv_td} and~\ref{main_td}.  Theorem~\ref{main:intro} follows from Theorem~\ref{main_td}.  The following two facts about sequences of 
nested separations will be useful at several points in the proofs.

\begin{lem}
\label{infinitechains}  Let $G$ be a connected  graph.  
Assume that $(A_i,B_i)_{i \in \mathbb N}$ is a sequence of proper separations of order at most some 
fixed natural number $k$.   Assume also that $A_i \subsetneq A_{i-1}$, 
every $A_i \sm B_i$ is connected, and every vertex in $A_i \cap B_i$ has a neighbour in $A_i \sm 
B_i$. 
Define $X$ as the set of vertices contained in infinitely many $A_i$. Then
\begin{enumerate}
\item $X \subseteq B_i$ for all but finitely many $i$,
\item there is a unique end $\mu$ which lies in every $A_i$, and 
\item $x \in X$ if and only if $x$ dominates $\mu$.
\end{enumerate}
\end{lem}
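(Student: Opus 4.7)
Since the $A_i$ are strictly decreasing, $X = \bigcap_i A_i$; each $A_i$ is therefore infinite (else the chain would terminate after finitely many steps), and the induced subgraph on $A_i$ is connected, as $A_i \sm B_i$ is connected and every vertex of $S_i := A_i \cap B_i$ has a neighbour there. The plan is to prove (2) existence of $\mu$ first, then (1), then (2) uniqueness, and finally (3).

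For the existence of $\mu$, I would pick $w_i \in A_i \sm A_{i+1}$ for each $i$ (non-empty by strict descent). All $w_j$ with $j \ge i$ lie in the connected subgraph $A_i$, so one can iteratively connect consecutive $w_i$ by paths in $A_i$, and a K\"onig-type compactness argument extracts a ray $R$ whose tail lies in every $A_i$, giving the required end $\mu$.

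For Part (1), I would fix a reference vertex $y \in B_0 \sm A_0$ (so $y \in B_i \sm A_i$ for every $i$) and argue by contradiction: suppose some $x \in X$ lies in $A_i \sm B_i$ for infinitely many $i$. For each such $i$, trimming $S_i$ to its subset $T_i$ of vertices with a neighbour in $y$'s component of $G - S_i$ produces a minimal $(x, y)$-separator of size at most $k$; Lemma~\ref{halin} supplies only finitely many such separators, so some $T^*$ equals $T_i$ for an infinite index set $J$, and for $i \in J$ the set $A_i \sm B_i$ lies in the fixed component $C$ of $G - T^*$ containing $x$. I would then induct on $k$: if $T^* = S_i$ for cofinitely many $i \in J$ (which is forced when $k = 1$), then $A_i \sm B_i = C$ and $A_i = C \cup T^*$ is constant along $J$, contradicting strict descent; otherwise, restricting the chain to $G[C]$ yields separations of order at most $k - 1$ satisfying the lemma's hypotheses (after passing to subsequences to secure strict descent and properness, and absorbing any degenerate subcases into the same strict-descent contradiction), and the inductive hypothesis forces $x \in S_i$, contradicting $x \in A_i \sm B_i$. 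A pigeonhole argument on $|X \cap S_i| \le k$ then yields $|X| \le k$ and hence $X \subseteq S_i \subseteq B_i$ for all large $i$.

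For uniqueness of $\mu$, any finite $F$ separating a hypothetical second end $\mu' \ne \mu$ from $\mu$ would satisfy $F \cap A_i = F \cap X \subseteq S_i$ for large $i$ by Part (1), so $F \cap (A_i \sm B_i) = \emptyset$; the tails of rays of $\mu$ and $\mu'$ both lie eventually in the connected set $A_i \sm B_i$, forcing them into the same component of $G - F$, a contradiction. For Part (3), the ($\Leftarrow$) direction is immediate: if $x$ dominates $\mu$ then the finite-order separation $(B_i, A_i)$ (with $\mu$ on the $A_i$-side) cannot place $x$ on the opposite side, so $x \in A_i$ for every $i$. For ($\Rightarrow$), given any finite $F$ avoiding $x$, Part (1) gives $F \cap A_i = F \cap X \subseteq S_i$ for large $i$, so $A_i \sm F = (A_i \sm B_i) \cup (S_i \sm F)$ remains connected (by the tightness assumption) and contains both $x$ and the tail of any ray of $\mu$; hence $x$ and $\mu$ lie in a common component of $G - F$, proving domination. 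The hardest step will be Part (1), where the induction on $k$ requires carefully checking that the restrictions to $G[C]$ inherit strict descent, properness, connectedness of the small side, and tightness at the new separator, with each degenerate failure mode showing that $A_i$ is constant along an infinite subsequence and thereby contradicting strict descent directly.
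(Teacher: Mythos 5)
The genuine gap is in your existence proof for $\mu$, which you run \emph{before} Part (1). You pick $w_i\in A_i\sm A_{i+1}$, join consecutive $w_i$ by paths inside $A_i$, and invoke a ``K\"onig-type compactness argument''. But K\"onig's lemma needs the union of those paths to be locally finite, and at that stage nothing prevents a vertex of $X$ from lying on infinitely many of them; an infinite connected graph that is not locally finite need not contain a ray at all. Concretely, let $G$ be a ray $r_1r_2r_3\ldots$ together with a vertex $v$ joined to every $r_j$, and for $i\ge 1$ put $A_i=\{v\}\cup\{r_j: j\ge i\}$, $B_i=\{v\}\cup\{r_j: j\le i\}$. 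All hypotheses of the lemma hold (order $2$, $A_i\sm B_i$ a subray, both separator vertices have neighbours there), $X=\{v\}$, and $w_i=r_i$ is forced; if each connecting path is chosen as $r_i\,v\,r_{i+1}$ (a perfectly legitimate path in $A_i$), the union is an infinite star, which is rayless. So ``extracts a ray whose tail lies in every $A_i$'' does not follow from what you have at that point. The paper avoids exactly this trap by proving Part (1) first: then $X$ has at most $k$ vertices, $A_i\sm X$ contains the connected set $A_i\sm B_i$ and is itself connected, and the connecting paths are chosen inside $A_i\sm X$; since no vertex lies in infinitely many sets $A_i\sm X$, the union is locally finite and K\"onig applies. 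Your proof is repaired the same way, and since your Part (1) nowhere uses the existence of $\mu$, you can simply reorder and route the paths through $A_i\sm X$.

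Apart from this, your argument is workable but takes a genuinely different and much heavier route for Part (1): you invoke Lemma~\ref{halin} to pin down a recurring minimal $x$--$y$ separator $T^*$ and then induct on $k$ inside the component $C$ of $G-T^*$. The degenerate cases you wave at (e.g.\ $(B_i\sm A_i)\cap C=\emptyset$, or $A_i\cap C$ stabilising along the subsequence) do close, but not quite by ``$A_i$ constant'': they give $A_i\sm B_i\se X$, hence $A_i\sm X\se A_i\cap B_i$ has at most $k$ elements, which contradicts the infinite strict descent; these checks, plus properness and order at most $k-1$ of the restricted separations, are where the real work sits and would need to be written out. The paper's proof of (1) is direct and needs neither Lemma~\ref{halin} nor induction: it considers the set $X'$ of vertices of $X$ with a neighbour outside $X$, shows each such vertex eventually lies in $A_i\cap B_i$ (so $|X'|\le k$), and then uses connectedness of $A_i\sm B_i$ to conclude that either $X\se B_i$ or $A_i\sm B_i\se X$, the latter contradicting strict descent. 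Your uniqueness argument and both directions of Part (3) are correct (the paper only writes out the forward direction, building infinitely many $x$--$R$ paths, whereas you argue via the separation definition of domination); those parts need no change.
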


\begin{proof}
First observe that $X = \bigcap_{i \in \mathbb N} A_i$ because the sequence $A_i$ is decreasing. Let 
$X'$ be the set of vertices in $X$ with a neighbour outside of $X$. For every $x \in X'$ we can find 
a neighbour $y$ of $x$ and $i_0 \in \mathbb N$ such that $y \notin A_i$ for every $i \geq i_0$. 
Since the edge $xy$ must be contained in either $A_i$ or $B_i$ we conclude that $x \in B_i$ and thus 
$x \in A_i \cap B_i$ for $i \geq i_0$. 

Hence there is $i_1\in \mathbb N$ such that $X' \subseteq A_i \cap B_i$ for every $i \geq i_1$.  The 
order of each separation is at most $k$, so $X'$ contains at most $k$ vertices. Now for $i \geq i_1$ 
every path from $X \sm B_i$ to $A_i \sm (X \cup B_i)$ must pass through $X'$ and thus through $B_i$. 
Since $A_i \sm B_i$ is connected this means that one of the two sets must be empty, i.e., either $X 
\sm B_i = \emptyset$ or $X\sm B_i=A_i\sm B_i$. Assume that the latter is the case. Then $A_i$ 
contains at most $k$ vertices which are not contained in $X$ and the same is clearly true for every 
$A_j$ for $j>i$. This contradicts the fact that the sequence $A_i$ was assumed to be infinite and 
strictly decreasing. We conclude that $X \subseteq B_i$ for $i \geq i_1$.  Note that this implies 
that $X=X'$ because if $i\geq i_1$ then $X\subseteq A_i\cap B_i$ and every vertex in $A_i\cap B_i$ 
has an neighbour in $A_i\setminus B_i$.

To see that there is an end $\mu$ which lies in every $A_i$ we construct a ray which has a tail 
in each $A_i$. For this purpose pick for $i\geq i_1$ a vertex $v_i \in A_i \sm X$ and paths $P_i$ 
connecting $v_i$ to $v_{i+1}$ in $A_i \sm X$. This is possible because $A_i \sm X$ contains $A_{i} 
\sm B_i$ and is connected ($A_i \sm B_i$ is connected and every vertex in $B_i \cap A_i$ has a 
neighbour in $A_i \sm B_i$). No vertex lies on infinitely many paths $P_i$ because no vertex is 
contained in infinitely many sets $A_i \sm X$. Hence the union of the paths $P_i$ is an infinite, 
locally finite graph and thus contains a ray. This ray belongs to an end $\mu$ which lies in 
every $A_i$.   

Finally we need to show that every vertex in $X$ dominates the end $\mu$.  Without loss of 
generality we can assume that $X \subseteq B_i$ for all $i$.  So,  let $R$ be a ray in $\mu$ and 
$x \in X$.  We will inductively construct infinitely many paths from $x$ to $R$ which only intersect 
in $x$. Assume that we already constructed some finite number of such paths. Since all of them have 
finite length, there is an index $i$ such that $A_i \sm B_i$ doesn't contain any vertex in their 
union. The ray $R$ has a tail contained in $A_i \sm B_i$ and since $x \in A_i \cap B_i$ we know that 
$x$ has a neighbour in $A_i \sm B_i$. Finally $A_i \sm B_i$ is connected, so we can find a path 
connecting $x$ to the tail of $R$ which intersects the previously constructed paths only in $x$. 
Proceeding inductively we obtain infinitely many paths connecting $x$ to $R$ which pairwise only 
intersect in $x$ completing the proof of the Lemma.
\end{proof}

We would now like to construct a subset of the set $\Srel$ of $\omega$-relevant separations that is 
both nested and invariant under all 
automorphisms and from that set we construct a tree.
The following two lemmas give us important properties of nestedness when we restrict to 
$\omega$-relevant separations.

\begin{lem}\label{options_lem}
Two separations  $(A,B), (C,D)$ in ${\cal S}_\omega$ are nested if and only if they are either 
comparable with respect to $\leq$, or $A\se D$.
\end{lem}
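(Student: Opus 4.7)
My plan is to split the equivalence into its two directions, relying on two basic facts about $\Srel$: for every $(A,B)\in\Srel$ the end $\omega$ lies in the $B$-side, and every such separation is tight.

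For the forward direction I would run through the four cases in the definition of nestedness. Cases 1 ($A\se C$, $D\se B$) and 4 ($B\se D$, $C\se A$) are exactly the two ways $(A,B)$ and $(C,D)$ can be comparable under $\leq$, so they deliver the first alternative in the conclusion. Case 2 ($A\se D$ and $C\se B$) already yields the second alternative $A\se D$. The one case I must rule out is case 3, $B\se C$ and $D\se A$. For this I would fix any ray $R$ belonging to $\omega$: since $\omega$ lies in the $B$-side of $(A,B)$ and in the $D$-side of $(C,D)$, all but finitely many vertices of $R$ lie in $B\sm A$, and simultaneously all but finitely many lie in $D\sm C$. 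But the hypothesis $D\se A$ would force this second tail of $R$ into $A$, directly contradicting the first statement.

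For the backward direction the comparable case is immediate (comparability is literally case 1 or 4 of nestedness). The substantive work is to show that the single inclusion $A\se D$ already forces $C\se B$, so that case 2 of nestedness holds. I would adapt the tightness argument given in the text just after the definition of $\leq$: suppose for contradiction that some $x$ lies in $C\sm B$. Then $x\in A$, and the hypothesis $A\se D$ places $x$ in $C\cap D$. Using that $(C,D)$ is tight, produce a neighbour $z$ of $x$ with $z\in C\sm D$. Since $x\in A\sm B$, the edge $xz$ must lie entirely in $A$ (no edge of $G$ joins $A\sm B$ to $B\sm A$), so $z\in A\se D$, contradicting $z\notin D$.

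The main obstacle is identifying which of the four nestedness cases is impossible and which isolates the $A\se D$ alternative; once one notices that the $\omega$-relevant setup puts both ends on the $B$- and $D$-sides, case 3 collapses immediately and the remaining work is a short tightness argument mirroring one already present in the excerpt.
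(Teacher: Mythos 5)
Your proposal is correct and follows essentially the same route as the paper: the impossible nestedness case ($B\se C$, $D\se A$) is excluded because $\omega$ lies in $B$ and $D$ but not in $A$ or $C$ (your ray argument just spells this out), and the converse from $A\se D$ to $C\se B$ is exactly the paper's tightness/edge argument using that every vertex of $C\cap D$ has a neighbour in $C\sm D$.
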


\begin{proof}
First assume that the two separations are nested. It is impossible that $B \se C$ and $D \se A$ 
since the end $\omega$ lies in $B$ and $D$, but not in $C$ and $A$. Hence, if the two separations 
are not comparable, then we know that $A \se D$ and $C \se B$.

For the converse implication first consider the case that $A \se D$.  We want to show that 
$C\subseteq B$.  Assume for a contradiction that there is a vertex $x$ in $C \sm B$. This vertex 
must be contained in $A\se D$ and hence in the separator $C \cap D$. By the definition of $\Srel$ the 
vertex $x$ must have a neighbour $y$ in $C \sm D$. Then $y \notin A$ and $x \notin B$, contradicting 
the fact that the edge $xy$ must lie in either $A$ or $B$, as $(A,B)$ is a separation.

Finally, note that any two separations in ${\cal S}_\omega$ that are comparable with respect to 
$\leq$ are obviously nested.
\end{proof}

\begin{lem}\label{finitely_incomparable}  \rm{(Analogies with \cite[Lemma~4.2]{zbMATH06486846})}
For each $(A,B)\in {\cal S}_\omega$ there are only finitely many $(C,D)\in {\cal S}_\omega$ not 
nested with $(A,B)$. 
\end{lem}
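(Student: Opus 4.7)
The plan is to argue by contradiction: suppose that infinitely many $(C_n, D_n) \in \Srel$ are not nested with $(A, B)$. By \autoref{options_lem} this means $A \not\subseteq C_n$, $C_n \not\subseteq A$ and $A \not\subseteq D_n$, so there are vertices of $A$ in both $C_n \setminus D_n$ and $D_n \setminus C_n$, and a vertex of $C_n$ in $B \setminus A$. My first step is to observe that $A$, $B$, $C_n$, and $D_n$ are all connected in $G$: by definition of $\omega$-relevance each "non-$\omega$" side is connected, and the remark following \autoref{Lrelevant} (tightness) ensures every separator vertex has a neighbour in the $\omega$-component on the opposite side, so gluing the separator to either side keeps it connected. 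Using connectivity of $A$ (resp.\ of $C_n$ and of $D_n$) together with routine path-chasing arguments (any path in $A$ from $A \cap (C_n \setminus D_n)$ to $A \cap (D_n \setminus C_n)$ must cross $C_n \cap D_n$, etc.), one obtains that $A \cap B$ contains a vertex in $C_n$ and a vertex in $D_n$; moreover, if $A \cap B \subseteq C_n \cap D_n$ then $A\cap B = C_n\cap D_n$ (as both have size $k$), and uniqueness of the $\omega$-component of $G-(A\cap B)$ forces $(C_n,D_n)$ to be nested with $(A,B)$. Hence, excluding finitely many exceptions, $A\cap B$ has a vertex in $C_n \setminus D_n$ and a vertex in $D_n \setminus C_n$.

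Since $A \cap B$ has only $k$ elements, pigeonhole lets me pass to an infinite subsequence in which $a \in A\cap B \cap (C_n \setminus D_n)$ and $b \in A\cap B \cap (D_n \setminus C_n)$ are a fixed pair independent of $n$. The main claim is that each $C_n \cap D_n$ is a minimal separator between $a$ and $b$ in the sense of \autoref{halin}. The component of $a$ in $G - (C_n \cap D_n)$ is $C_n \setminus D_n$, whose external neighbourhood is exactly $C_n \cap D_n$ by tightness. For $b$, I would argue that $b$ lies in the $\omega$-component of $D_n \setminus C_n$, whose external neighbourhood is also all of $C_n \cap D_n$. This is where the technical heart of the argument lies: I would apply \autoref{Lrelevant} to $(A,B)$ to obtain a ray in $\omega$ starting at $b$ and lying in $B$, apply \autoref{Lrelevant} to $(C_n, D_n)$ to obtain $k$ pairwise disjoint rays from $C_n \cap D_n$ into the $\omega$-component of $D_n \setminus C_n$, and use a Menger-style rerouting to exhibit a path from $b$ into the $\omega$-component that stays inside $D_n \setminus C_n$.

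Granted the main claim, \autoref{halin} bounds the number of distinct minimal $(a,b)$-separators of size at most $k$, hence the number of distinct separators $C_n \cap D_n$. Each element of $\Srel$ is determined by its separator together with the choice of the non-$\omega$ connected component of the complement, of which there are only finitely many viable options per separator (a component $K$ with $N(K) = C_n \cap D_n$ is determined by one neighbour per separator vertex). Thus only finitely many $(C_n, D_n)$ survive, contradicting the hypothesis.

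The main obstacle is the verification that $b$ lies in the $\omega$-component of $D_n \setminus C_n$: a priori $b$ might lie in a non-$\omega$ component, and then the external neighbourhood of $b$'s component would be a strict subset of $C_n \cap D_n$, so $C_n \cap D_n$ would fail to be Halin-minimal between $a$ and $b$. The Menger rerouting sketched above is what closes this gap. As a fallback, one can instead replace $C_n \cap D_n$ by a subset-minimal subseparator $S_n \subseteq C_n \cap D_n$ still separating $a$ from $b$ (which, by a standard observation, coincides with a Halin-minimal separator), apply \autoref{halin} to bound the $S_n$, and then separately bound the number of $\omega$-relevant $(C,D)$ whose separator extends a given $S_n$.
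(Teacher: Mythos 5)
Your overall strategy is the paper's: show that each $(C,D)\in\Srel$ crossing $(A,B)$ minimally separates (in the sense of Lemma~\ref{halin}) two vertices of the finite set $A\cap B$, and then count. But the step you yourself flag as the technical heart does not just lack a proof -- it is false as you set it up. After the pigeonhole you fix an \emph{arbitrary} vertex $b\in A\cap B\cap(D_n\sm C_n)$ and want $b$ to lie in the $\omega$-component of $G-(C_n\cap D_n)$; no Menger-style rerouting can show this, because it can fail. For a concrete instance take $k=3$: let the ``far'' part be a one-ended triangular ladder on rays $u^1,u^2,u^3$ (end of vertex degree $3$), and add vertices $c_1,a,b,w,c_2,c_3$ with edges $c_1a$, $c_1b$, $c_1w$, $ac_2$, $ac_3$, $bc_3$, $wu^1_1$, $c_2u^2_1$, $c_3u^3_1$. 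Then $(A,B)$ with $A=\{c_1,a,b,w\}$ and $(C,D)$ with $C=\{a,c_1,c_2,c_3\}$ are both $\omega$-relevant (three disjoint rays from each separator witness the last condition) and are not nested, yet the component of $b$ in $G-(C\cap D)$ is $\{b\}$, with neighbourhood $\{c_1,c_3\}\subsetneq C\cap D$; so $C\cap D$ is not a Halin-minimal $a$--$b$ separator (it is not even inclusion-minimal). The paper avoids exactly this trap by not prescribing the second vertex in advance: it picks $v'\in(A\sm B)\cap(C\cap D)$, which exists by non-nestedness, and uses Lemma~\ref{Lrelevant} to start a ray at $v'$ whose continuation lies in the $\omega$-component of $D\sm C$; since that ray starts in $A\sm B$ and $\omega$ lies in $B$, it must cross $A\cap B$, and the crossing vertex $w$ is then \emph{automatically} in the $\omega$-component. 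With the pair $(v,w)$, $v\in(C\sm D)\cap(A\cap B)$, minimality is immediate, and no pigeonhole is needed (Lemma~\ref{halin} is applied to each of the finitely many pairs separately; the separation is then recovered from its separator as the component containing $v$).

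The fallback does not repair the gap. If you replace $C_n\cap D_n$ by an inclusion-minimal $a$--$b$ subseparator $S_n$, Lemma~\ref{halin} indeed bounds the possible sets $S_n$, but your closing claim -- that only finitely many $\omega$-relevant separations have separator containing a given $S_n$ -- is false in general. The paper's own example of a ray with paths $P_n$ glued to its initial vertex $r$ has infinitely many $\omega$-relevant separations whose separator is exactly $\{r\}$, one for each attached path taken as $C\sm D$; similarly, knowing $S_n$ (a proper subset of a $k$-separator) together with the position of $a$ does not pin down $(C_n,D_n)$. So that step would require essentially the full strength of the lemma you are trying to prove. The counting only works when the \emph{whole} separator $C\cap D$ is shown to be Halin-minimal between a pair from $A\cap B$, which is what the paper's choice of $w$ achieves.
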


\begin{proof}  The first step is to show that if $(C,D)$ is not nested with $(A,B)$ then $(C,D)$ 
separates some vertices $v$ and $w$ in $A\cap B$.  Then we show that we may assume that the 
separation is minimal.  Since $A\cap B$ is finite there are only finitely many possibilities for the 
pair $v, w$ and we can apply Lemma~\ref{halin} to deduce the result.

First suppose for a contradiction that $(C\sm D)\cap (A\cap B)$ is empty. Since $C\sm D$ is 
connected, it must be a subset of $A\sm B$ or $B\sm A$.
As every vertex in $C \cap D$ has a neighbour in $C \sm D$ it follows that $C\se A$ in the first 
case, whilst $C\se B$ in the second. In both cases $(A,B)$ and $(C,D)$ are nested by 
Lemma~\ref{options_lem}, contrary to our assumption.
Hence there exists a vertex $v\in (C\sm D)\cap (A\cap B)$.   Note that by letting the separations 
$(A,B)$ and $(C,D)$ switch roles we see that $(A\sm B)\cap (C\cap D)$ is also non-empty.

Since the separation $(C,D)$ is in ${\cal S}_\omega$ there is by Lemma~\ref{Lrelevant} a family of 
$k$ disjoint rays that all have their initial vertices in $C\cap D$.  Because $\omega$ lies in $D$, 
all vertices in these rays, except their initial vertices, are contained in the component of 
$D\setminus C$ that contains $\omega$.  Pick a vertex $v'$ from $(A\setminus B)\cap(C\cap D)$.  This 
vertex $v'$ is the initial vertex of one of the rays mentioned above.  Since $\omega$ lies in $B$ 
these rays must contain a vertex $w$ from $A\cap B$ and as mentioned above $w$ is contained in the 
component of $D\setminus C$ that contains $\omega$.   Now we have shown that $(C, D)$ separates the 
two vertices $v$ and $w$.  This separation is minimal because $v$ is in $C\sm D$ and $C\sm D$ is 
connected and has $C\cap D$ as it neighbourhood, and $w$ is contained in the component of $G-(C\cap 
D)$ that contains $\omega$ and that component has the whole of $C\cap D$ as its neighbourhood.
\end{proof}

Let $G$ be a one-ended graph whose end $\omega$ is undominated and has finite 
vertex 
degree $k$.  Recall that by Lemma~\ref{infinitechains} there are no infinite decreasing chains in 
$\Srel$---such a chain would define an end $\mu \neq \omega$, contradicting the assumption that 
$G$ has only one end.  In particular, $\Srel$ has minimal elements.   Assign recursively an ordinal 
$\alpha(A,B)$ to each $(A,B)\in \Srel$ by the following method:
if $(A,B)$ is minimal (with respect to $\leq$ in $\Srel$) then set $\alpha(A,B)=0$;
otherwise  define $\alpha(A,B)$ as the smallest ordinal $\beta$ such that  $\alpha(C,D)<\beta$ for 
all separations $(C,D)\in \Srel$ such that $(C,D)<(A,B)$.  For $v\in V(G)$, let $\Srel(v)$ be the 
set of those separations $(A,B)$ in $\Srel$ with $v\in A\cap B$.
Now set
\[
 \alpha(v)=\sup\{\alpha(A,B) \mid (A,B)\in \Srel(v)\}.
\]
If it so happens that $\Srel(v)$ is empty then $\alpha(v)=0$.
For a vertex set $S$, we let $\alpha(S)$ be the supremum over all $\alpha(v)$ with $v\in S$.  Note 
that the functions $\alpha(A,B)$ and $\alpha(v)$ are both invariant under the action of the 
automorphism group of $G$.

\begin{eg}
Below is a construction of a graph where $\alpha$ takes ordinal values that are not natural numbers.
However, it is not difficult to show that for a locally finite connected graph the $\alpha$-values 
are always natural numbers.

We construct a graph $G$ at which $\alpha$ takes values that are not natural 
numbers. Let $P_n=v_0^n, \ldots, v_n^n$ be a path of length $n$.  We obtain $G$ by taking a ray and identifying its starting vertex $r$ with the vertices $v_n^n$ for each $n\geq 0$.  This graph has only one end $\mu$ and its vertex degree is $1$.
For $0\leq k\leq n-1$ the separation $(\{v_0^n,\ldots, v_k^n\}, V(G)\sm\{v_0^n,\ldots, v_{k-1}^n\})$ is $\mu$-relevant and its $\alpha$-value is $k$.
Hence any separation $(A,B)$ with $r$ (and all the attached paths) in $A$ has $\alpha$-value at 
least the ordinal $\omega$.
\end{eg}

\begin{lem}\label{alm_C}
 Let $G$ be a graph with only one end $\omega$.  Assume that $\omega$ is undominated and has vertex 
degree $k$. 
Let $(C,D)$ be in ${\cal S}_\omega$. Then for all but finitely many vertices $v$ in $C$, we have 
$\alpha(v)\leq \alpha(C,D)$. 
\end{lem}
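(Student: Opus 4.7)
The plan is to reduce to a finite list of ``bad'' vertices using the finiteness results already available, and for every other vertex in $C$ to show that all separations in $\Srel(v)$ are $\leq (C,D)$, whence the ordinals compare the right way.

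First I would localize the problem. Let $N$ be the (finite, by \autoref{finitely_incomparable}) collection of all separations in $\Srel$ that are not nested with $(C,D)$, and let
\[
F := (C\cap D)\;\cup\;\bigcup_{(A,B)\in N}(A\cap B).
\]
Since $(C,D)$ has order $k$ and $N$ is a finite union of finite separators, $F$ is finite. I claim that any vertex $v\in C\sm F$ satisfies $\alpha(v)\leq \alpha(C,D)$, which of course proves the lemma.

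Next I would fix $v\in C\sm F$ and take an arbitrary $(A,B)\in \Srel(v)$. Because $v\notin A'\cap B'$ for any $(A',B')\in N$, the separation $(A,B)$ cannot lie in $N$, so $(A,B)$ is nested with $(C,D)$. By \autoref{options_lem} one of three possibilities holds: (a) $(A,B)\leq (C,D)$; (b) $(C,D)\leq (A,B)$; or (c) $A\se D$. In case (b) we have $v\in B\se D$, and in case (c) we have $v\in A\se D$; both force $v\in C\cap D\se F$, contradicting the choice of $v$. Hence only case (a) can occur, i.e.\ $(A,B)\leq (C,D)$.

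Finally I would read off the ordinal comparison directly from the recursive definition of $\alpha$. If $(A,B)=(C,D)$ then trivially $\alpha(A,B)=\alpha(C,D)$; otherwise $(A,B)<(C,D)$ in $\Srel$, and by definition $\alpha(A,B)<\alpha(C,D)$. Either way $\alpha(A,B)\leq \alpha(C,D)$. Taking the supremum over all $(A,B)\in \Srel(v)$ yields $\alpha(v)\leq \alpha(C,D)$, as desired.

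The whole argument is essentially bookkeeping: the real content is packaged in \autoref{options_lem} and \autoref{finitely_incomparable}. The only place one has to be slightly careful is checking that \autoref{options_lem}'s third alternative ``$A\se D$'' is genuinely ruled out for $v\in C\sm D$; this is the step where the asymmetric form of the lemma (together with $v\notin D$) does the work.
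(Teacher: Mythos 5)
Your proof is correct and takes essentially the same route as the paper: discard the finitely many vertices of $C\cap D$ together with those lying in separators of the finitely many separations not nested with $(C,D)$ (Lemma~\ref{finitely_incomparable}), then show every $(A,B)\in\Srelv$ satisfies $(A,B)\leq(C,D)$ and conclude from the recursive definition of $\alpha$. The only cosmetic difference is that you dispatch the remaining nestedness cases by citing Lemma~\ref{options_lem} together with $v\notin D$, whereas the paper reruns that case analysis by hand using where $\omega$ lies; both are sound.
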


\begin{proof}
By Lemma~\ref{finitely_incomparable}, there are only finitely many separations in $\Srel$ that are not 
nested with $(C,D)$.
Let $C'$ the set of those vertices in $C\setminus D$ that are not in any separator of these finitely 
many separations.
It suffices to show that if $v\in C'$ and $(A,B)$ in $\Srel(v)$ then  $\alpha(A,B)< \alpha(C,D)$.  
Note that the result is trivially true if $\Srel(v)$ is empty.
By the choice of $v$, the separations $(A,B)$ and $(C,D)$ are nested.
Since $v$ is in $(C\sm D)\cap (A\cap B)$, it is not true that $A \se D$ or $B \se D$.
Since the end $\omega$ does not lie in the sides $A$ and $C$, it does not lie in the side $A\cup C$ 
of the separation $(A\cup C,B\cap D)$.
Hence it lies in the side $B\cap D$. In particular $B\cap (D\sm C)$ is nonempty. 
Thus it is not true that $B \se C$.
Looking at the definition of nestedness we see that $A\se C$.   Hence $(A,B)<(C,D)$ and thus 
$\alpha(A,B)<\alpha(C,D)$ and the result follows.
\end{proof}

\begin{lem}
\label{largersep}
Let $G$ be a graph with only one end $\omega$.  Assume that $\omega$ is undominated and has vertex 
degree $k$. 
For every separation $(C,D)$ in ${\cal S}_\omega$, there is a separation $(A,B) \in {\cal S}_\omega$ 
such that 
$C \se A$ and $\alpha (C,D)<\alpha(A,B) $. 
\end{lem}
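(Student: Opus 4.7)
The plan is to apply \autoref{Tsequence} to produce a sequence $\{(A_n, B_n)\}_{n \geq 0}$ of $\omega$-relevant separations with $B_n$ strictly decreasing and every finite vertex set eventually contained in $A_n \sm B_n$, and then to combine $(C,D)$ with a suitable member of this sequence to form the desired separation
\[
(A, B) := (C \cup A_N, \ D \cap B_N).
\]
The first observation is that since the $B_n$ are decreasing, once a vertex enters $A_n \sm B_n$ it remains there for all subsequent $n$. Applying this to the $k$ vertices of $C \cap D$ and to a fixed vertex $v \in D \sm C$ (which exists because $\omega$ lies there), I can choose $N$ large enough that $C \cap D \subseteq A_N \sm B_N$ and $v \in A_N \sm C$ simultaneously; the former will make the combined separation well-behaved, while the latter will witness the strict inequality.

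The bulk of the work is then to check $(A, B) \in \Srel$. By \autoref{mod}, together with the fact that any separation with $\omega$ on one side has order at least $k$ (cf.\ \autoref{Lrelevant}), both $(C \cup A_N, D \cap B_N)$ and $(C \cap A_N, D \cup B_N)$ have order exactly $k$; in particular $A_N \cap B_N \se D$. For connectedness of $A \sm B$, I would write
\[
A \sm B \;=\; (C \sm D) \,\cup\, (A_N \sm B_N) \,\cup\, ((A_N \cap B_N) \sm D).
\]
The first two pieces are connected by the $\omega$-relevance of $(C, D)$ and $(A_N, B_N)$; they are glued together via the vertices of $C \cap D \se A_N \sm B_N$, each of which has a neighbour in $C \sm D$ by tightness of $(C,D)$; and any leftover separator vertex of $(A_N, B_N)$ attaches to $A_N \sm B_N$ by tightness of $(A_N, B_N)$. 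Tightness of $(A,B)$ is similar: $A \cap B = (A_N \cap B_N) \cap D$, and each such vertex is a separator vertex of $(A_N, B_N)$, hence has a neighbour in $A_N \sm B_N \se A \sm B$. The minimality clause in the definition of $\Srel$ is automatic because no separation with $\omega$ on one side can have order less than $k$.

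Finally, the vertex $v$ lies in $A_N \se A$ but not in $C$, so $C \subsetneq A$. Therefore $(C,D) < (A,B)$ strictly in the partial order on $\Srel$, and by the recursive definition of $\alpha$ this immediately gives $\alpha(C,D) < \alpha(A,B)$. The main technical hurdle is the connectedness-and-tightness verification for $(A,B)$; everything hinges on the choice of $N$ being large enough that the finite separator $C \cap D$ lies fully inside the inner region $A_N \sm B_N$, after which the two separations no longer interact in an obstructive way.
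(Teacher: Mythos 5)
Your construction is correct in substance, but it takes a different route from the paper. The paper also starts from the sequence of \autoref{Tsequence} and chooses $N$ with $C\cap D\se A_N\sm B_N$, but instead of forming a new separation it shows directly that $C\se A_N$: if some $x\in (C\sm D)\cap(A_N\cap B_N)$ existed, then by \autoref{Lrelevant}(1) applied to $(A_N,B_N)$ there is a ray of $\omega$ starting at $x$ whose other vertices lie in $B_N\sm A_N$; this ray avoids $C\cap D$ (which lies in $A_N\sm B_N$), hence stays in $C\sm D$, contradicting that $\omega$ lies in $D$. Connectedness of $C\sm D$ then gives $C\se A_N$, the separators $C\cap D$ and $A_N\cap B_N$ are disjoint, so $(C,D)<(A_N,B_N)$ and the $\alpha$-inequality is immediate. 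In particular your corner separation $(C\cup A_N, D\cap B_N)$ is in fact equal to $(A_N,B_N)$; your argument trades that one ray argument for the submodularity bookkeeping of \autoref{mod}, which is a legitimate alternative. Be aware, though, that the paper's proof goes on (via a Ramsey argument) to establish a stronger per-vertex fact --- for each relevant $v\in C$ some $(A_n,B_n)$ has $\alpha(v)<\alpha(A_n,B_n)$ --- and it is this stronger fact, not the literal statement, that the existence of $X$-nice separations in \autoref{even_nicer3} relies on; your argument, like the paper's first paragraph, yields only the statement as written.

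One justification in your sketch needs repair: the blanket claim that \emph{any} separation with $\omega$ on one side has order at least $k$ is not true, and \autoref{Lrelevant} does not assert it. What is true, and what you need, is: (i) $(C\cup A_N,D\cap B_N)$ has order $\geq k$ because its non-$\omega$ side contains $C$, so the last clause in the $\omega$-relevance of $(C,D)$ applies --- the same clause also settles the minimality requirement for your $(A,B)$, since any $(C',D')$ with $A\se C'$ satisfies $C\se C'$; and (ii) $(C\cap A_N,D\cup B_N)$ has order $\geq k$ because, thanks to your choice $C\cap D\se A_N\sm B_N$, its separator contains the $k$-element set $C\cap D$ (equivalently, the $k$ disjoint rays of \autoref{Lrelevant}(1) for $(C,D)$ start inside $C\cap A_N$). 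With these two bounds, \autoref{mod} forces both corners to have order exactly $k$, and equality in (ii) gives $C\cap A_N\cap B_N=\emptyset$, i.e.\ your claim $A_N\cap B_N\se D$. After this patch the remaining verifications (connectedness of $A\sm B$, the neighbour condition on $A\cap B$, strictness via the extra vertex $v$, and $\alpha(C,D)<\alpha(A,B)$ from $(C,D)<(A,B)$) go through as you describe.
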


\begin{proof}   Let $\{ (A_n, B_n)\}_{n\geq 0}$ be a sequence of $\omega$-relevant separations as 
described in Theorem~\ref{Tsequence}.  
Find a separation $(A, B)$ in this sequence 
such that $C\cap D\subseteq A\sm B$.  Suppose for a contradiction that $C\sm D$ contains a vertex 
$x$ from $A\cap B$.  There is a ray $R$ that has $x$ as a starting vertex and every other vertex is 
contained in $B\sm A$.  Because $C\cap D$ contains no vertex from $B$ we see that this ray would be 
contained in $C\sm D$, contradicting the assumption that the end $\omega$ lies in $D$.  Hence, $C\sm 
D$ does not intersect $A\cap B$ and then, since $C\sm D$ is connected, we conclude that $C\subseteq 
A$.
Thus $ \alpha(C,D)\leq \alpha(A,B)$.

By the previous Lemma there are at most finitely many vertices $v$ in $C$ such that 
$\alpha(v)>\alpha(C,D)$.  Suppose for a contradiction that $v$ is such a vertex and there is no 
value of $n$ such that $\alpha(v)< \alpha(A_n, B_n)$.  Then we can find a sequence $\{(C_n, 
D_n)\}_{n\geq 0}$ of separations in $\Srel(v)$ such that $\alpha(C_1, D_1)<\alpha(C_2, D_2)<\cdots$ 
and for every $n$ there is a number $r_n$ such $\alpha(A_n, B_n)<\alpha(C_{n_r}, B_{n_r})$.   By 
Lemma~\ref{finitely_incomparable} we may assume that for all values of $n$ and $m$ the separations 
$(C_n, D_n)$ and $(C_m, B_m)$ are nested.  Say that a pair of separations $\{(C_n, D_n), (C_m, 
D_m)\}$ is blue if the separations are comparable with respect to $\leq$ and red otherwise.  By 
Ramsey's Theorem, see e.g.\ \cite[(1.9) on p.~16]{zbMATH00044603}, there is an infinite set of 
separations such that all pairs from that set have the same colour.  If all pairs from that set were 
blue then we could find an infinite increasing or a decreasing chain.   By 
Lemma~\ref{infinitechains}(2) there cannot be an infinite descending chain of separations and if there 
was an infinite increasing chain in $\Srel(v)$ then, by Lemma~\ref{infinitechains}(3) with the roles of the $A_i$'s and the $B_i$'s reversed,  $v$ would be a dominating vertex for the end 
$\omega$, contrary to assumptions.  Hence all pairs from that infinite set must be red and we can 
conclude that there is an infinite set of separations in the family $\{(C_n, D_n)\}_{n\geq 0}$ such 
that no two of them are comparable with respect to ordering.  We may assume that if $n$ and $m$ are 
distinct then $(C_n, D_n)$ and $(C_m, D_m)$ are not comparable and then $C_n\sm D_n$ and $C_m\sm 
D_m$ are disjoint.  Start by choosing $n$ such that $v\in A_n\sm B_n$ and then choose $m$ such that 
none of the vertices in $A_n\cap B_n$ is in $C_m\sm D_m$.  There must be some vertex $u$ that 
belongs both to $B_n$ and $C_m\sm D_m$.  The set $(C_m\sm D_m)\cup\{v\}$ is connected and thus it 
contains a $v-u$ path $P$.   But $v\in A_n\sm B_n$ and $u\in B_n\sm A_n$ and the path $P$ contains 
no vertices from $A_n\cap B_n$. We have reached a contradiction.  Hence our original assumption 
must be wrong.  
\end{proof}

Let $X$ be a connected set of vertices which cannot be separated from the end $\omega$ by a 
separation of order less than $k$. A separation $(A,B)\in\Srel$ is called \emph{$X$-nice}, if for 
every $v \in A 
\cap B$ we have $\alpha(v) > \alpha(X)$ and there is some $\varphi \in \Aut(G)$ such that $\varphi(X) 
\se A$ (then we must have $\varphi(X)\subseteq A\sm B$). Let ${\cal N}(X)$ be the set of all 
$X$-nice separations in $\Srel$ which are minimal with respect to $\leq$, i.e.\ $\mathcal N (X)$ 
contains all $X$-nice separations $(A,B) \in \Srel$  such that $A$ is minimal with respect to 
inclusion.

\begin{lem}\label{even_nicer3}
Let $G$ be a graph with only one end $\omega$.  Assume that $\omega$ is undominated and has vertex 
degree $k$.

Suppose $(X,Y)\in \Srel$. 
Then ${\cal N}(X)$ is non-empty.  For each automorphism $\varphi$ of $G$ there is a unique element $(A,B)$ in ${\cal N}(X)$ such that $\varphi(X)\subseteq A$. If $(A,B)$ and $(C,D)$ are not equal and in ${\cal N}(X)$, then $A 
\se D$ and $C\se B$. Furthermore, any two elements of ${\cal N}(X)$ can be mapped onto each other by 
an automorphism.
\end{lem}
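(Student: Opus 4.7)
My plan is to prove the four claims---non-emptiness, per-witness uniqueness, the nestedness condition $A \se D$ and $C \se B$ for distinct elements, and orbit transitivity---in sequence, with a single uncrossing argument via Lemma~\ref{mod} doing the main work. For non-emptiness, I start from $(X,Y) \in \Srel$ and iterate Lemma~\ref{largersep} to build an ascending chain in $\Srel$ containing $X$ in every $A$-side with strictly increasing $\alpha$-values. By iteration (transfinite if necessary), the chain reaches some $(A,B) \in \Srel$ with $X \se A$ and $\alpha(A,B) > \alpha(X)$: otherwise a Ramsey/finitely-incomparable argument (Lemma~\ref{finitely_incomparable}) extracts an ascending chain of $\omega$-relevant separations sharing some $v \in X$ in their separator, and Lemma~\ref{infinitechains} (applied with sides swapped, using that $G$ is one-ended with undominated end) would force $v$ to dominate $\omega$, a contradiction. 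Then each $v \in A \cap B$ satisfies $\alpha(v) \geq \alpha(A,B) > \alpha(X)$, so $(A,B)$ is $X$-nice with identity witness. Lemma~\ref{infinitechains} (which rules out infinite descending chains in $\Srel$ under our hypotheses) lets me descend to a minimal $X$-nice separation in $\mathcal{N}(X)$. For a general $\varphi \in \Aut(G)$, apply the same procedure to $(\varphi(X), \varphi(Y))$; the set $\mathcal{N}(X)$ is unchanged because the witness in $X$-niceness is arbitrary.

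For uniqueness, suppose $(A,B), (A',B') \in \mathcal{N}(X)$ both contain $\varphi(X)$. The $\alpha$-condition on the separators forces $\varphi(X) \se (A \sm B) \cap (A' \sm B')$, so the connected set $\varphi(X)$ lies in a single component $K$. Lemma~\ref{mod} applied to $(A,B)$ and $(A',B')$, combined with the Menger-type property that $\varphi(X)$ cannot be separated from $\omega$ by fewer than $k$ vertices (inherited from $(X,Y) \in \Srel$), forces both corners $(A \cap A', B \cup B')$ and $(A \cup A', B \cap B')$ to have order exactly $k$. An edge analysis shows $N_G(K) \se (A \cap A') \cap (B \cup B')$, so $|N_G(K)| \leq k$; combined with the lower bound $|N_G(K)| \geq k$ (since $(K \cup N_G(K), V(G) \sm K)$ separates $\varphi(X)$ from $\omega$), this forces $|N_G(K)| = k$. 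The resulting separation $(A^\star, B^\star) := (K \cup N_G(K), V(G) \sm K)$ then lies in $\Srel$, is $X$-nice (its separator sits in $(A \cap B) \cup (A' \cap B')$, so every separator $\alpha$-value exceeds $\alpha(X)$), and satisfies $A^\star \se A$ and $A^\star \se A'$. By minimality, $(A^\star, B^\star) = (A,B) = (A',B')$.

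For distinct $(A,B), (C,D) \in \mathcal{N}(X)$: comparability would contradict minimality, so they are incomparable. Let $\varphi, \psi$ be witnesses. Since $\varphi(X)$ is connected and disjoint from $C \cap D$, it lies either in $C \sm D$ or in $D \sm C$; the former would trigger the uniqueness uncrossing and force $(A,B) = (C,D)$, contradicting distinctness, so $\varphi(X) \se A \cap D$. The symmetric dichotomy for $\psi$ yields $\psi(X) \se B \cap C$. Mirroring the uncrossing with $(A,B)$ and $(D,C)$ in place of $(A,B)$ and $(A',B')$ (now $\varphi(X) \se A \cap D$ furnishes the lower bound for one corner and $\psi(X) \se B \cap C$ for the other), I produce an $X$-nice $(A^\star, B^\star) \le (A,B)$ with $A^\star \se A \cap D$; minimality forces $A = A^\star \se D$, and symmetrically $C \se B$. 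For transitivity, let $(A^*, B^*)$ be the unique element of $\mathcal{N}(X)$ containing $X$ (per-witness uniqueness at $\varphi = \mathrm{id}$). For any $(A,B) \in \mathcal{N}(X)$ with witness $\varphi$, the separation $\varphi^{-1}(A,B)$ lies in $\mathcal{N}(X)$ (which is $\Aut(G)$-invariant as a set because both $X$-niceness and $\le$-minimality are automorphism-invariant) and contains $X$, so by uniqueness $\varphi^{-1}(A,B) = (A^*, B^*)$, i.e.\ $(A,B) = \varphi(A^*, B^*)$; hence $\mathcal{N}(X)$ is a single $\Aut(G)$-orbit.

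The main obstacle is the uncrossing step, which recurs in the uniqueness and the nestedness arguments. Tightening $|N_G(K)|$ to equal $k$ requires coordinating Lemma~\ref{mod}'s order identity with the inherited Menger-type inseparability property of $\varphi(X)$ (and, in the nestedness case, also of $\psi(X)$), so that both corner orders in the sum $=2k$ are pinned down to equal $k$. One then has to verify carefully that the constructed $(A^\star, B^\star)$ satisfies every clause of the definition of $\Srel$---connectivity of $K$, a neighbour in $K$ for each separator vertex, and the no-smaller-order-separation clause---as well as the two parts of $X$-niceness.
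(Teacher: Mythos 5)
The parts of your proposal dealing with per-witness uniqueness, the nestedness of distinct elements of ${\cal N}(X)$, and transitivity are essentially the paper's own argument: you uncross with the same corner separations, use Lemma~\ref{mod} together with the fact that $\varphi(X)$ (resp.\ $\psi(X)$) cannot be separated from $\omega$ by fewer than $k$ vertices to pin both corner orders to $k$, build a smaller $X$-nice separation from the component containing $\varphi(X)$ plus the corner separator, and appeal to $\leq$-minimality; your $(K\cup N_G(K),\,V(G)\sm K)$ is only a cosmetic rephrasing of the paper's $(A',B')$, and the transitivity argument (pull back by $\varphi^{-1}$ and use per-witness uniqueness) is identical. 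These steps check out, including the verification that $N_G(K)$ lies in the union of the two separators and hence in $A\cap A'$ (resp.\ $A\cap D$), which is what makes the minimality appeal legitimate.

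The gap is in your non-emptiness argument. Iterating Lemma~\ref{largersep} gives an $\omega$-indexed increasing chain whose $\alpha$-values need never exceed $\alpha(X)$ (note that $\alpha(X)$ can be strictly larger than $\alpha(X,Y)$, because of the finitely many exceptional vertices of Lemma~\ref{alm_C}), and ``transfinite iteration'' is not available: an infinite increasing chain in $\Srel$ has no supremum in $\Srel$, so there is no limit step. Your fallback then asserts that the Ramsey/Lemma~\ref{finitely_incomparable} argument ``extracts an ascending chain'' in some $\Srel(v)$ with $v\in X$, but the Ramsey dichotomy equally well produces an infinite pairwise-incomparable (nested) family, and excluding that case is precisely the hard part: in the paper it is done inside the proof of Lemma~\ref{largersep}, using the exhausting sequence of Theorem~\ref{Tsequence} (every finite vertex set is eventually contained in $A_n\sm B_n$) together with a path/connectivity argument. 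The chain you obtain by iterating Lemma~\ref{largersep} has no such exhaustion property, so as written the antichain case is simply not addressed and the contradiction does not go through. The repair is the paper's route: work with the sequence $(A_n,B_n)$ of Theorem~\ref{Tsequence}, use Lemma~\ref{alm_C} to reduce to the finitely many $v\in X$ with $\alpha(v)>\alpha(X,Y)$, and invoke the second half of the proof of Lemma~\ref{largersep} to find $n$ with $X\se A_n$ and $\alpha(A_n,B_n)>\alpha(v)$ for each such $v$; then $(A_n,B_n)$ is $X$-nice with identity witness, and minimal $X$-nice separations exist by Lemma~\ref{infinitechains}, as you say.
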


\begin{proof}
The existence of an $X$-nice separation follows from Lemma~\ref{largersep}. 
Minimal such separations exist because by Lemma~\ref{infinitechains} an infinite descending chain 
would imply that $G$ had another end $\mu\neq\omega$.

Let $(A,B)$ and $(C,D)$ be elements of ${\cal N}(X)$.   Suppose $\varphi(X) \se A$ and $\psi(X) \se 
C$, where $\varphi, \psi\in\Aut(G)$. Note that $\varphi(X)$ is disjoint from $C \cap D$ because 
$\alpha(\varphi(X)) = \alpha(X)$, which is strictly less than $\alpha(v)$ for any $v\in C\cap D$. 
Hence it is a subset of either $C \sm D$ or $D \sm C$. 
We next prove that if $(A,B)$ and $(C,D)$ are not equal, then $A \se D$ and $C\se B$.

First we consider the case that $\varphi(X)$ is a subset of  $C \sm D$. 
Our aim is to show that $(A,B)$ and $(C,D)$ are equal.  This also implies that $(A,B)$ is the unique element in ${\cal N}(X)$ such that $\varphi(X)\subseteq A$. 
Our strategy will be to construct a $X$-nice separation that is $\leq$ to both of them and by 
minimality of $(A,B)$ and $(C,D)$ we will conclude that it must be equal to
both of them. 
Note that $\varphi(X)$ is included in $(C \sm D)\cap (A\sm B)$. 
Let $A'$ be the connected component of $(C \sm D)\cap (A\sm B)$ that contains  the connected set 
$\varphi(X)$ together with the separator of $(A\cap C, B \cup D)$. 
Let $B'$ be the union of $B \cup D$ with the other components of $(C \sm D)\cap (A\sm B)$.  

Next we show that the separation $(A',B')$ is in $\Ncal(X)$. 
Since the end $\omega$ lies in $B\cap D$, this vertex set is infinite. 
Because $(A,B)$ is in $\Srel$, the separation $(A\cup C, B \cap D)$ has order at least $k$.
Hence by Lemma~\ref{mod}, the separation $(A\cap C, B \cup D)$ has order at most $k$. 
The property that $X$ cannot be separated from $\omega$ by fewer than $k$ vertices implies that the 
separation $(A',B')$ has order precisely $k$. 
Also, every vertex of the separator of $(A',B')$ has a neighbour in $A'\sm B'$ and in $B'\sm A'$.  
Clearly $\omega$ lies in $B'$ and there is no separation $(C',D')$ of order less than $k$ such that 
$A'\se C'$ and $\omega$ lies in $D'$ as $(X,Y)\in \Srel$.
Hence $(A',B')$ is in $\Srel$ and thus it is in $\Ncal(X)$ as $A'\se A$. 
Since $A'\se A$, it must be that $A'=A$ by the minimality of $(A,B)$. Similarly, $A'=C$. Thus $A=C$ 
and so $(A,B)=(C,D)$.
This completes the case when $\varphi(X)$ is a subset of  $C \sm D$.

So we may assume that $\varphi(X) \se D \sm C$, and by symmetry that $\psi(X) \se B \sm A$. 
Consider the separations  $(A \cap D, B \cup C)$ and $(B \cap C, A \cup D)$. 
They must have order at least $k$ because $\varphi(X)\subseteq A\cap D$, $\omega\in B\cup C$ and 
$\psi(X)\subseteq B\cap C$, $\omega\in A\cup D$. So they must have order precisely $k$ by 
Lemma~\ref{mod}.
Let $A'$ be the component of $G-( B \cup C)$  that contains $\varphi(X)$ together with the separator 
of $(A\cap D, B \cup C)$. 
Let $B'$ be the union of $B \cup C$ with the other components. 
Similar as in the last case we show that $(A',B')$ is in $\Ncal(X)$. By the minimality of $(A,B)$ it 
must be that $A \se D$.
The above argument with the separation $(B \cap C, A \cup D)$ in place of  $(A \cap D, B \cup C)$ 
yields that $C \se B$. 
This completes the proof that if $(A,B)$ and $(C,D)$ are not equal and in ${\cal N}(X)$, then $(A,B)$ and $(C,D)$ are nested.

By the above there is for each $\varphi \in \Aut(G)$ a unique separation $(A_\varphi,B_\varphi) \in 
{\cal N}(X)$ such that $\varphi(X) \se A_\varphi$. 
If we apply $\varphi^{-1}$ to this separation we must obtain the unique separation $(A,B) \in {\cal 
N}(X)$  such that $X \se A$. 
Hence any separation of ${\cal N}(X)$ can be mapped by an automorphism to every other separation in 
${\cal N}(X)$.
\end{proof}

\begin{thm}\label{thm:aut_G_inv_td}
Let $G$ be a connected graph with only one end $\omega$, which  is undominated and has finite vertex 
degree $k$. 
Then there is a nested set $\cal S$ of $\omega$-relevant separations of $G$ that is $\Aut(G)$-invariant.  And there is a 1-ended tree $T$ and a bijection between the edge set of $T$ and $\cal 
S$ such that the natural action of $\Aut(G)$ on $S$ induces an action on $T$ by automorphisms.
\end{thm}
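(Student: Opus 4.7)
The plan is to build $\mathcal{S}$ as a $\Aut(G)$-invariant union of orbits in $\Srel$ indexed by a growing tower of connected sets, and then extract the tree $T$ from $\mathcal{S}$ via the standard correspondence between nested systems of separations and tree-decompositions (see \cite{zbMATH06330529}).

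First I apply Theorem \ref{Tsequence} to obtain an $\omega$-relevant sequence $\{(A_n,B_n)\}_{n\ge 0}$ with $B_n$ strictly decreasing and $\bigcup_n (A_n\sm B_n)=V(G)$. Set $X_n:=A_n\sm B_n=V(G)\sm B_n$, so the $X_n$ form a connected, strictly increasing chain. Each $X_n$ cannot be separated from $\omega$ by fewer than $k$ vertices: if $(C,D)$ had $X_n\subseteq C$, $\omega\in D$ and order $<k$, then every vertex of $A_n\cap B_n$ has a neighbour in $X_n\subseteq C$, forcing $A_n\cap B_n\subseteq C$, hence $A_n\subseteq C$, contradicting the $\omega$-relevance of $(A_n,B_n)$. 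Lemma \ref{even_nicer3} then yields for each $n$ a non-empty set $\mathcal{N}(X_n)\subseteq\Srel$ consisting of pairwise nested separations forming a single $\Aut(G)$-orbit. Define
\[
\mathcal{S}\;:=\;\bigcup_{n\ge 0}\mathcal{N}(X_n),
\]
which is $\Aut(G)$-invariant as a union of orbits.

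The key technical step is to verify that $\mathcal{S}$ is nested across levels. Given $(A,B)\in\mathcal{N}(X_n)$ and $(C,D)\in\mathcal{N}(X_m)$ with $n<m$, I reduce via the transitive $\Aut(G)$-action on $\mathcal{N}(X_m)$ to the case where $X_m\subseteq C$. Since $\alpha(X_n)\le\alpha(X_m)$ and the separator $C\cap D$ has $\alpha$-values strictly exceeding $\alpha(X_m)$, the separation $(C,D)$ is itself $X_n$-nice, so minimality inside $\mathcal{N}(X_n)$ produces $(A^{*},B^{*})\in\mathcal{N}(X_n)$ with $A^{*}\subseteq C$; uniqueness in Lemma \ref{even_nicer3} forces $(A^{*},B^{*})$ to be the canonical element with $X_n\subseteq A^{*}$. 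Finally $(A,B)$ and $(A^{*},B^{*})$ are nested inside $\mathcal{N}(X_n)$ by the $A\subseteq B^{*}$, $A^{*}\subseteq B$ pattern in Lemma \ref{even_nicer3}; combining this with $A^{*}\subseteq C$ and $D\subseteq B^{*}$ (which follows from $(A^{*},B^{*})\le(C,D)$ together with tightness), one then verifies, by running through the options in Lemma \ref{options_lem}, that $(A,B)$ and $(C,D)$ themselves must be nested.

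For the tree, applying the standard correspondence to the nested family $\mathcal{S}$ yields a tree $T$ whose edges are in $\Aut(G)$-equivariant bijection with $\mathcal{S}$; the action of $\Aut(G)$ on $\mathcal{S}$ lifts to an action on $T$ by automorphisms. The strictly $\le$-increasing chain $(A_0,B_0)<(A_1,B_1)<\cdots$ in $\mathcal{S}$ produces a ray in $T$, so $T$ has at least one end; any second end would force an infinite $\le$-descending chain in $\mathcal{S}$, which is ruled out by Lemma \ref{infinitechains} together with the hypothesis that $G$ is one-ended. Hence $T$ is one-ended. I expect the main obstacle to be the cross-level nestedness in the third paragraph: it requires combining the uniqueness and crossing structure of Lemma \ref{even_nicer3} with the monotonicity of $\alpha$-values along the tower $X_0\subsetneq X_1\subsetneq\cdots$, and careful bookkeeping of which side of each separation contains each $X_n$.
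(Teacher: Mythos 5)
There is a genuine gap, and it sits exactly where you predicted: the cross-level nestedness. Two separate problems arise. First, you invoke Lemma~\ref{even_nicer3} for $X_n=A_n\sm B_n$, but that lemma's hypothesis is ``$(X,Y)\in\mathcal S_\omega$'', i.e.\ $X$ must be a \emph{full side} of an $\omega$-relevant separation; its proof really uses this (the step showing the corner separation $(A',B')$ satisfies the last $\omega$-relevance condition argues from $\varphi(X)\se A'\se C'$ and the non-existence of small separations $(C',D')$ with $X\se C'$). Your substitute verification that $X_n$ ``cannot be separated from $\omega$ by fewer than $k$ vertices'' is also flawed: a vertex of $A_n\cap B_n$ with a neighbour in $X_n\se C$ need not lie in $C$ --- it can sit in $D\sm C$ with that neighbour in $C\cap D$; indeed, under the reading of ``separates'' used in the proof of Lemma~\ref{even_nicer3}, any connected set of fewer than $k$ vertices is trivially separated by placing it inside the separator, so the claim fails whenever $A_n\sm B_n$ is small. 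This part is repairable by simply taking $X_n=A_n$ (the full side), for which the lemma applies verbatim. Second, and more seriously, your nestedness argument across levels does not go through: the assertion that uniqueness forces the minimal $X_n$-nice separation $(A^*,B^*)$ below $(C,D)$ to be ``the canonical element with $X_n\se A^*$'' is unjustified (its witnessing automorphism need not be the identity), and the four inclusions you end up with --- $A\se B^*$, $A^*\se B$, $A^*\se C$, $D\se B^*$ --- simply do not imply that $(A,B)$ and $(C,D)$ are nested; they are compatible with a crossing pair, so ``running through the options in Lemma~\ref{options_lem}'' is not a verification but precisely the missing content. If you want to push your route through, you would have to re-run the corner/submodularity arguments from the proof of Lemma~\ref{even_nicer3} in an asymmetric setting (minimality available only for the lower-level separation, and the higher-level witness $\psi(X_m)$ possibly meeting $A\cap B$), which is a nontrivial piece of work your proposal does not supply.

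The paper sidesteps this difficulty by a different organisation of the construction: instead of taking an arbitrary sequence from Theorem~\ref{Tsequence} and forming $\bigcup_n\mathcal N(X_n)$ over independently defined levels, it builds the reference chain \emph{recursively}, choosing $(A_n,B_n)\in\mathcal N(A_{n-1})$ with $A_{n-1}\subsetneq A_n$, and lets $\mathcal S$ be the union of the orbits of these $(A_n,B_n)$. Then for $n<m$ one has $\varphi(A_n)\se\varphi(A_m)$ for the \emph{same} automorphism $\varphi$, and nestedness of $\varphi(A_n,B_n)$ with $\psi(A_m,B_m)$ reduces to nestedness of the two level-$m$ elements $\varphi(A_m,B_m),\psi(A_m,B_m)\in\mathcal N(A_{m-1})$, which is exactly what Lemma~\ref{even_nicer3} provides; no new crossing analysis is needed. (The paper also builds the tree explicitly as a directed graph with out-degree one rather than citing a general correspondence, but that is a secondary difference; your one-endedness idea via Lemma~\ref{infinitechains} matches the paper's.) As written, your construction of $\mathcal S$ is not shown to be nested, so the proposal does not yet prove the theorem.
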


\begin{proof}   Pick some $\omega$-relevant separation $(A_0, B_0)$.
Define a sequence $(A_n, B_n)$ of separations as follows. 
For $n \in \mathbb N_{>0}$ pick $(A_n, B_n) \in \mathcal N(A_{n-1})$ such that $A_{n-1} \subsetneq 
A_n$, 
which is possible by Lemma~\ref{even_nicer3}.  Observe that the sequence of separations $(A_n, B_n)$ 
has the same properties as the sequence in Theorem~\ref{Tsequence}.

Now let 
\[
\mathcal S = \{(\varphi(A_n), \varphi(B_n))\mid n \in \mathbb N_{>0}, \varphi \in \Aut(G)\}.
\]
Note that $(A_0,B_0)$ is not an element in  $\Scal$. 

First we prove that $\Scal$ is nested. 
Let $(\varphi(A_n), \varphi(B_n))$ and $(\psi (A_m), \psi(B_m))$ be two different elements of 
$\mathcal S$ (here $\varphi$ and $\psi$ are automorphisms of $G$). 
If $m = n$ then they are nested by Lemma~\ref{even_nicer3}, since they both are elements of $\mathcal 
N(A_{n-1})$. 
Hence assume without loss of generality that $n < m$.  If $\varphi(A_m) = \psi(A_m)$ then 
$\varphi(A_n)\se\varphi(A_m)= \psi(A_{m})$ which implies that the two separations are nested. Otherwise by 
Lemma~\ref{even_nicer3} we have $\varphi(A_n) \se \varphi(A_{m}) \se \psi(B_m)$, also showing 
nestedness, by Lemma~\ref{options_lem}. 

Next we construct a directed graph $T_+$.
We define $T_+$ as follows. Its vertex set is $\Scal$. We add a directed edge from $(\varphi(A_n), 
\varphi(B_n))$ to $(\psi(A_{n+1}), \psi(B_{n+1}))$
if $\varphi(A_n)$ is a subset of $\psi(A_{n+1})$. By \autoref{even_nicer3}, each vertex has 
outdegree at most one. And by the construction of $\Scal$
it has outdegree at least one. 

The next step is to show that the graph is connected. Let 
$(C,D)=\varphi(A_n, B_n)$ be a vertex in $T_+$.  Find an $m$ such that $C\subseteq A_m\sm B_m$.  
Suppose for a contradiction that $(\varphi(A_m), \varphi(B_m))\neq (A_m, B_m)$.  Both 
$(\varphi(A_m), \varphi(B_m))$ and 
$(A_m, B_m)$ are in ${\cal N}(X)$. By Lemma~\ref{even_nicer3} $\varphi(A_m)\subseteq B_m$.
Thus $\varphi(A_m)$ is empty. This is a contradiction to the assumption that $(A_m, B_m)$ is a 
proper separation. Now we see that $$(A_m, B_m)=(\varphi(A_m), \varphi(B_m)), (\varphi(A_{m-1}), 
\varphi(B_{m-1})), \ldots, (\varphi(A_n), \varphi(B_n))=(C,D)$$ 
is a path in $T_+$ from $(A_m, B_m)$ to $(C,D)$.  
Thus every vertex in $T_+$ is in the same connected component as some vertex 
$(A_m, B_m)$ and since they all belong to the same component we deduce that $T_+$ is connected.  
Hence the corresponding undirected graph $T$ is a tree.

The map that 
sends $(\varphi(A_n), \varphi(B_n))$ to the edge with endvertices $(\varphi(A_n), \varphi(B_n))$ 
and 
$(\psi(A_{n+1}), \psi(B_{n+1}))$  is clearly a bijection.
If the ray $(A_1,B_1), (A_2, B_2), \ldots$ is removed from $T$ then what remains of $T$ is clearly rayless and thus the 
tree $T$ is one-ended. 

The statement about the action of $\Aut(G)$ on $T$ follows easily since the properties used to 
define 
$T$ are invariant under $\Aut(G)$.
\end{proof}

A \emph{tree-decomposition} of a graph $G$ consists of a tree $T$ and a family $(P_t)_{t \in V(T)}$ 
of subsets of $V(G)$, one for each vertex of $T$ such that 
\begin{enumerate}[label=(T\arabic*)]
\item $V(G) = \bigcup_{t \in V(T)} P_t$, \item for every edge $e \in E(G)$ there is $t \in V(T)$ 
such that both endpoints of $e$ lie in $P_t$, and \item $P_{t_1} \cap P_{t_3} \subseteq P_{t_2}$ 
whenever $t_2$ lies on the unique path connecting $t_1$ and $t_3$ in $T$.
\end{enumerate}
The tree $T$ is called \emph{decomposition tree}, the sets $P_t$ are called the \emph{parts} of the 
tree-decomposition. 

We associate to an edge $e = st$ of the decomposition tree a separation of $G$ as follows. Removing 
$e$ from $T$ yields two components $T_s$ and $T_t$. Let $X_s = \bigcup _{u \in T_s} P_u$ and $X_t = 
\bigcup _{u \in T_t} P_u$. If $X_s \sm X_t$ and $X_t \sm X_s$ are non-empty (this will be the case 
for all tree-decompositions considered in this paper), then $(X_s,X_t)$ is a proper separation of $G$. 
Clearly, the set of all separations associated to edges of a decomposition tree is nested.

The separators $A \cap B$ of the separations associated to edges of a decomposition 
tree are called \emph{adhesion sets}. The supremum of the sizes of adhesion sets is called the 
\emph{adhesion} of the tree-decomposition. The tree-decompositions constructed in this 
paper all have finite adhesion.

Given a graph $G$ with only one end $\omega$ and a tree-decomposition $(T,P_t\mid t\in V(T))$ of 
$G$ of finite adhesion, then  $(T,P_t\mid t\in V(T))$ \emph{displays} $\omega$ if firstly the 
decomposition tree $T$ has only one end; call it $\mu$. And secondly for any edge $st$ of $T$ with 
$\mu$ in 
$T_t$, the associated separation $(X_s,X_t)$ has the property that $\omega$ lies in $X_t$. 

A tree-decomposition is \emph{$\Aut(G)$-invariant} if the set $S$ of separations associated to it is 
closed by the natural action of $\Aut(G)$ on $S$.
The following implies Theorem~\ref{main:intro}. 

\begin{thm}\label{main_td}
Let $G$ be a connected graph with only one end $\omega$, which is undominated and has finite 
vertex 
degree $k$. 
Then $G$ has a tree-decomposition $(T,P_t\mid t\in V(T))$ of adhesion $k$ that displays $\omega$ 
and is $\Aut(G)$-invariant.
\end{thm}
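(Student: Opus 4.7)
The plan is to construct a tree-decomposition of $G$ directly from the nested $\Aut(G)$-invariant set $\Scal$ of $\omega$-relevant separations and the one-ended tree $T$ produced by Theorem~\ref{thm:aut_G_inv_td}. For each edge $e$ of $T$ with associated separation $(A_e, B_e) \in \Scal$, write $u_e$ for the vertex of $T$ that labels $e$ under the bijection $\Scal \to E(T)$ from the proof of Theorem~\ref{thm:aut_G_inv_td}, and $w_e$ for the other endpoint; removing $e$ splits $T$ into two subtrees, $T_{u_e} \ni u_e$ and $T_{w_e} \ni w_e$. I assign the side $A_e$ to $T_{u_e}$ and $B_e$ to $T_{w_e}$, consistently with the directed structure of $T_+$: edges point from smaller $A$-sides to larger ones, so $T_{u_e}$ is the side ``further from $\mu$''. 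Writing $S_t^e$ for the side assigned to $t$'s component of $T - e$, define
\[
P_t = \bigcap_{e \ni t} S_t^e \qquad \text{for each } t \in V(T).
\]

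Axioms (T2) and (T3) follow by standard arguments. For (T2), any edge $xy$ of $G$ lies on one side of every separation in $\Scal$, so both $x$ and $y$ induce the same consistent orientation of $E(T)$; by the argument for (T1) below this orientation points to some $t \in V(T)$, and then $\{x,y\} \subseteq P_t$. For (T3), if $t_2$ lies on the $t_1$-$t_3$ path in $T$, then for every edge $e$ incident to $t_2$ the component of $T - e$ containing $t_2$ contains at least one of $t_1, t_3$, so $v \in P_{t_1} \cap P_{t_3}$ automatically lies on $t_2$'s side of $e$, giving $v \in P_{t_2}$.

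The heart of the proof is (T1), i.e.\ that $V(G) = \bigcup_{t \in V(T)} P_t$. Each $v \in V(G)$ induces a consistent orientation of $E(T)$: orient each edge $e$ toward the endpoint whose assigned side contains $v$. In any tree such a consistent orientation either points to a unique vertex $t \in V(T)$ (in which case $v \in P_t$) or to an end of $T$. Since $T$ is one-ended with end $\mu$, I must rule out the latter. For this I use the canonical ray $(A_1, B_1), (A_2, B_2), \ldots$ inside $T$ obtained from Theorem~\ref{Tsequence}: by the side assignment, the $A_n$-side of the edge labeled $(A_n, B_n)$ is the subtree not containing $(A_{n+1}, B_{n+1})$, i.e.\ the direction away from $\mu$. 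But Theorem~\ref{Tsequence} guarantees $v \in A_n \sm B_n$ for all sufficiently large $n$, so $v$'s orientation on all but finitely many edges of this ray points away from $\mu$. Hence the orientation cannot point to $\mu$ and must point to some vertex $t \in V(T)$, giving $v \in P_t$.

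The remaining claims are immediate. The adhesion equals $k$ because every separation in $\Scal$ has order exactly $k$. The decomposition displays $\omega$ since $T$ is one-ended and, by the side assignment, for each edge $e = st$ with $\mu$ in $T_t$ the $t$-side is $B_e$, which contains $\omega$ by $\omega$-relevance. Finally, $\Aut(G)$-invariance of the tree-decomposition follows from that of $\Scal$, since the side assignment, the parts, and the incidence structure of $T$ are all defined canonically from $\Scal$. The main obstacle is (T1) together with justifying the side-assignment convention: this is where both the one-endedness of $T$ and the undominatedness of $\omega$ (via Theorem~\ref{Tsequence}) enter essentially.
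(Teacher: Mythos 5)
Your construction coincides with the paper's: since the side assigned to $t$ along its out-edge is $A_t$ and along each in-edge from $u\in N_+(t)$ is $B_u$, your $P_t=\bigcap_{e\ni t}S_t^e$ is exactly the paper's $P_t=A_t\setminus\bigcup_{u\in N_+(t)}(A_u\setminus B_u)$, so this is essentially the same proof. The paper dismisses the verifications as ``straightforward''; your check of (T1) via the consistent orientation together with the exhaustion property of the ray $(A_n,B_n)$ from Theorem~\ref{Tsequence} supplies precisely the details the paper omits, and the remaining claims (adhesion $k$, displaying $\omega$, $\Aut(G)$-invariance) are handled as in the paper.
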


\begin{proof}
 We follow the notation of the proof of Theorem~\ref{thm:aut_G_inv_td}. 
 
 Given a vertex $t$ of $T_+$, the \emph{inward neighbourhood} of $t$, denoted by $N_+(t)$, is  
the set of vertices $u$ of $T_+$ such that there is a directed edge from $u$ to $t$ in $T_+$. 
Recall that the vertices of $T_+$ are (in bijection with) separations; we refer to the 
separation 
associated to the vertex $t$ by $(A_t,B_t)$. Given a vertex $t$, we let $P_t=A_t\sm \bigcup_{u\in 
N_+(t)} (A_u\sm B_u)$. 

It is straightforward that $(T,P_t\mid t\in V(T))$ is a tree-decomposition of adhesion $k$ (whose set of associated separations is $\Scal\cup \{(B,A)\mid (A,B)\in \Scal\}$). 
It is not hard to see that $(T,P_t\mid t\in V(T))$ displays $\omega$ 
and is $\Aut(G)$-invariant. 
\end{proof}

\begin{eg}\label{undom_nec}
In this example we construct a one-ended graph $G$ whose end is dominated and has vertex degree 1, but the graph $G$ has no tree-decomposition of finite adhesion that is invariant under the group of automorphisms and whose decomposition tree is one-ended.
We obtain $G$ from the canopy tree by adding a new vertex adjacent to all the leaves of the canopy tree. Then we add infinitely many vertices of degree one only incident to that new vertex, see Figure~\ref{fig:dom}.

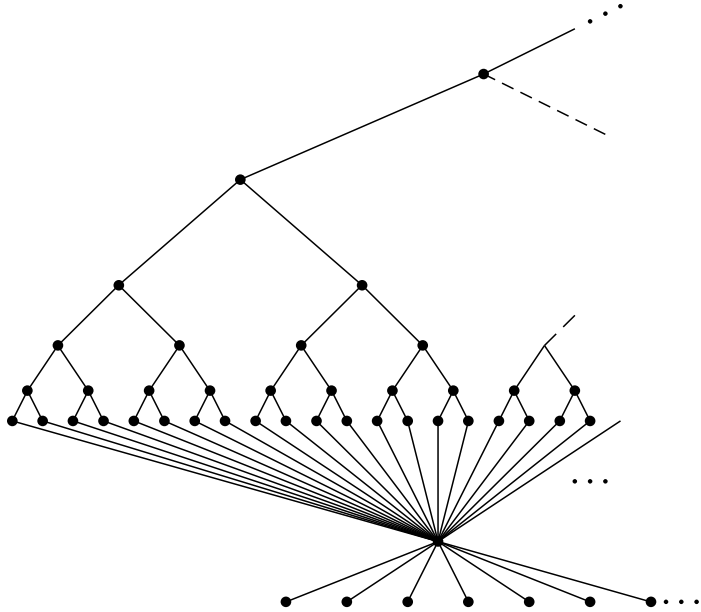
\begin{figure}
\centering
%
%
\psscalebox{1.0 1.0} 
{
\begin{pspicture}(0,-3.9992788)(9.098557,3.9992788)
\psdots[linecolor=black, dotsize=0.14](0.0689904,-1.5302885)
\psdots[linecolor=black, dotsize=0.14](0.46899042,-1.5302885)
\psdots[linecolor=black, dotsize=0.14](0.8689904,-1.5302885)
\psdots[linecolor=black, dotsize=0.14](1.2689904,-1.5302885)
\psdots[linecolor=black, dotsize=0.14](1.6689904,-1.5302885)
\psdots[linecolor=black, dotsize=0.14](2.0689905,-1.5302885)
\psdots[linecolor=black, dotsize=0.14](2.4689903,-1.5302885)
\psdots[linecolor=black, dotsize=0.14](2.8689904,-1.5302885)
\psdots[linecolor=black, dotsize=0.14](0.2689904,-1.1302884)
\psdots[linecolor=black, dotsize=0.14](1.0689903,-1.1302884)
\psdots[linecolor=black, dotsize=0.14](1.8689904,-1.1302884)
\psdots[linecolor=black, dotsize=0.14](2.6689904,-1.1302884)
\psdots[linecolor=black, dotsize=0.14](0.6689904,-0.5302884)
\psdots[linecolor=black, dotsize=0.14](2.2689905,-0.5302884)
\psdots[linecolor=black, dotsize=0.14](1.4689904,0.26971158)
\psline[linecolor=black, linewidth=0.02](0.0689904,-1.5302885)(0.2689904,-1.1302884)
\psline[linecolor=black, linewidth=0.02](0.46899042,-1.5302885)(0.2689904,-1.1302884)
\psline[linecolor=black, linewidth=0.02](0.8689904,-1.5302885)(1.0689903,-1.1302884)
\psline[linecolor=black, linewidth=0.02](1.2689904,-1.5302885)(1.0689903,-1.1302884)
\psline[linecolor=black, linewidth=0.02](1.0689903,-1.1302884)(0.6689904,-0.5302884)
\psline[linecolor=black, linewidth=0.02](0.6689904,-0.5302884)(0.2689904,-1.1302884)
\psline[linecolor=black, linewidth=0.02](0.6689904,-0.5302884)(1.4689904,0.26971158)
\psline[linecolor=black, linewidth=0.02](1.4689904,0.26971158)(2.2689905,-0.5302884)
\psline[linecolor=black, linewidth=0.02](2.2689905,-0.5302884)(1.8689904,-1.1302884)
\psline[linecolor=black, linewidth=0.02](1.8689904,-1.1302884)(1.6689904,-1.5302885)
\psline[linecolor=black, linewidth=0.02](1.8689904,-1.1302884)(2.0689905,-1.5302885)
\psline[linecolor=black, linewidth=0.02](2.4689903,-1.5302885)(2.6689904,-1.1302884)(2.6689904,-1.1302884)
\psline[linecolor=black, linewidth=0.02](2.6689904,-1.1302884)(2.8689904,-1.5302885)(2.8689904,-1.5302885)
\psline[linecolor=black, linewidth=0.02](2.6689904,-1.1302884)(2.2689905,-0.5302884)
\psdots[linecolor=black, dotsize=0.14](3.2689905,-1.5302885)
\psdots[linecolor=black, dotsize=0.14](3.6689904,-1.5302885)
\psdots[linecolor=black, dotsize=0.14](4.06899,-1.5302885)
\psdots[linecolor=black, dotsize=0.14](4.4689903,-1.5302885)
\psdots[linecolor=black, dotsize=0.14](4.8689904,-1.5302885)
\psdots[linecolor=black, dotsize=0.14](5.2689905,-1.5302885)
\psdots[linecolor=black, dotsize=0.14](5.6689906,-1.5302885)
\psdots[linecolor=black, dotsize=0.14](6.06899,-1.5302885)
\psdots[linecolor=black, dotsize=0.14](3.4689903,-1.1302884)
\psdots[linecolor=black, dotsize=0.14](4.2689905,-1.1302884)
\psdots[linecolor=black, dotsize=0.14](5.06899,-1.1302884)
\psdots[linecolor=black, dotsize=0.14](5.8689904,-1.1302884)
\psdots[linecolor=black, dotsize=0.14](3.8689904,-0.5302884)
\psdots[linecolor=black, dotsize=0.14](5.4689903,-0.5302884)
\psdots[linecolor=black, dotsize=0.14](4.6689906,0.26971158)
\psline[linecolor=black, linewidth=0.02](3.2689905,-1.5302885)(3.4689903,-1.1302884)
\psline[linecolor=black, linewidth=0.02](3.6689904,-1.5302885)(3.4689903,-1.1302884)
\psline[linecolor=black, linewidth=0.02](4.06899,-1.5302885)(4.2689905,-1.1302884)
\psline[linecolor=black, linewidth=0.02](4.4689903,-1.5302885)(4.2689905,-1.1302884)
\psline[linecolor=black, linewidth=0.02](4.2689905,-1.1302884)(3.8689904,-0.5302884)
\psline[linecolor=black, linewidth=0.02](3.8689904,-0.5302884)(3.4689903,-1.1302884)
\psline[linecolor=black, linewidth=0.02](3.8689904,-0.5302884)(4.6689906,0.26971158)
\psline[linecolor=black, linewidth=0.02](4.6689906,0.26971158)(5.4689903,-0.5302884)
\psline[linecolor=black, linewidth=0.02](5.4689903,-0.5302884)(5.06899,-1.1302884)
\psline[linecolor=black, linewidth=0.02](5.06899,-1.1302884)(4.8689904,-1.5302885)
\psline[linecolor=black, linewidth=0.02](5.06899,-1.1302884)(5.2689905,-1.5302885)
\psline[linecolor=black, linewidth=0.02](5.6689906,-1.5302885)(5.8689904,-1.1302884)(5.8689904,-1.1302884)
\psline[linecolor=black, linewidth=0.02](5.8689904,-1.1302884)(6.06899,-1.5302885)(6.06899,-1.5302885)
\psline[linecolor=black, linewidth=0.02](5.8689904,-1.1302884)(5.4689903,-0.5302884)
\psline[linecolor=black, linewidth=0.02](1.4689904,0.26971158)(3.0689905,1.6697116)
\psline[linecolor=black, linewidth=0.02](4.6689906,0.26971158)(3.0689905,1.6697116)
\psdots[linecolor=black, dotsize=0.14](3.0689905,1.6697116)
\psdots[linecolor=black, dotsize=0.14](6.4689903,-1.5302885)
\psdots[linecolor=black, dotsize=0.14](6.8689904,-1.5302885)
\psdots[linecolor=black, dotsize=0.14](7.2689905,-1.5302885)
\psdots[linecolor=black, dotsize=0.14](7.6689906,-1.5302885)
\psline[linecolor=black, linewidth=0.02](6.4689903,-1.5302885)(6.6689906,-1.1302884)
\psline[linecolor=black, linewidth=0.02](6.6689906,-1.1302884)(6.8689904,-1.5302885)
\psline[linecolor=black, linewidth=0.02](7.2689905,-1.5302885)(7.4689903,-1.1302884)
\psline[linecolor=black, linewidth=0.02](7.4689903,-1.1302884)(7.6689906,-1.5302885)
\psdots[linecolor=black, dotsize=0.14](6.6689906,-1.1302884)
\psdots[linecolor=black, dotsize=0.14](7.4689903,-1.1302884)
\psline[linecolor=black, linewidth=0.02](6.6689906,-1.1302884)(7.06899,-0.5302884)
\psline[linecolor=black, linewidth=0.02](7.4689903,-1.1302884)(7.06899,-0.5302884)
\psline[linecolor=black, linewidth=0.02, linestyle=dashed, dash=0.17638889cm 0.10583334cm](7.06899,-0.5302884)(7.4689903,-0.1302884)
\psline[linecolor=black, linewidth=0.02](3.0689905,1.6697116)(6.2689905,3.0697117)
\psline[linecolor=black, linewidth=0.02, linestyle=dashed, dash=0.17638889cm 0.10583334cm](6.2689905,3.0697117)(7.8689904,2.2697115)
\psline[linecolor=black, linewidth=0.02](6.2689905,3.0697117)(7.4689903,3.6697116)
\psdots[linecolor=black, dotsize=0.14](5.6689906,-3.1302884)
\psline[linecolor=black, linewidth=0.02](5.6689906,-3.1302884)(0.0689904,-1.5302885)
\psline[linecolor=black, linewidth=0.02](0.46899042,-1.5302885)(5.6689906,-3.1302884)
\psline[linecolor=black, linewidth=0.02](5.6689906,-3.1302884)(0.8689904,-1.5302885)
\psline[linecolor=black, linewidth=0.02](1.2689904,-1.5302885)(5.6689906,-3.1302884)
\psline[linecolor=black, linewidth=0.02](5.6689906,-3.1302884)(2.0689905,-1.5302885)
\psline[linecolor=black, linewidth=0.02](5.6689906,-3.1302884)(1.6689904,-1.5302885)
\psline[linecolor=black, linewidth=0.02](5.6689906,-3.1302884)(2.4689903,-1.5302885)
\psline[linecolor=black, linewidth=0.02](2.8689904,-1.5302885)(5.6689906,-3.1302884)
\psline[linecolor=black, linewidth=0.02](5.6689906,-3.1302884)(3.2689905,-1.5302885)
\psline[linecolor=black, linewidth=0.02](3.6689904,-1.5302885)(5.6689906,-3.1302884)
\psline[linecolor=black, linewidth=0.02](5.6689906,-3.1302884)(4.06899,-1.5302885)
\psline[linecolor=black, linewidth=0.02](5.6689906,-3.1302884)(4.4689903,-1.5302885)
\psline[linecolor=black, linewidth=0.02](5.6689906,-3.1302884)(4.8689904,-1.5302885)
\psline[linecolor=black, linewidth=0.02](5.6689906,-3.1302884)(5.2689905,-1.5302885)
\psline[linecolor=black, linewidth=0.02](5.6689906,-3.1302884)(5.6689906,-1.5302885)
\psline[linecolor=black, linewidth=0.02](5.6689906,-3.1302884)(6.06899,-1.5302885)
\psline[linecolor=black, linewidth=0.02](5.6689906,-3.1302884)(6.4689903,-1.5302885)
\psline[linecolor=black, linewidth=0.02](5.6689906,-3.1302884)(6.8689904,-1.5302885)
\psline[linecolor=black, linewidth=0.02](5.6689906,-3.1302884)(7.2689905,-1.5302885)
\psline[linecolor=black, linewidth=0.02](5.6689906,-3.1302884)(7.6689906,-1.5302885)
\psline[linecolor=black, linewidth=0.02](5.6689906,-3.1302884)(8.068991,-1.5302885)
\psdots[linecolor=black, dotsize=0.14](6.2689905,3.0697117)
\psdots[linecolor=black, dotsize=0.06](7.4689903,-2.3302884)
\psdots[linecolor=black, dotsize=0.06](7.6689906,-2.3302884)
\psdots[linecolor=black, dotsize=0.06](7.8689904,-2.3302884)
\psdots[linecolor=black, dotsize=0.06](7.6689906,3.7697115)
\psdots[linecolor=black, dotsize=0.06](7.8689904,3.8697116)
\psdots[linecolor=black, dotsize=0.06](8.068991,3.9697115)
\psdots[linecolor=black, dotsize=0.14](3.6689904,-3.9302883)
\psdots[linecolor=black, dotsize=0.14](4.4689903,-3.9302883)
\psdots[linecolor=black, dotsize=0.14](5.2689905,-3.9302883)
\psdots[linecolor=black, dotsize=0.14](6.06899,-3.9302883)
\psdots[linecolor=black, dotsize=0.14](6.8689904,-3.9302883)
\psdots[linecolor=black, dotsize=0.14](7.6689906,-3.9302883)
\psline[linecolor=black, linewidth=0.02](5.6689906,-3.1302884)(3.6689904,-3.9302883)
\psline[linecolor=black, linewidth=0.02](4.4689903,-3.9302883)(5.6689906,-3.1302884)
\psline[linecolor=black, linewidth=0.02](5.6689906,-3.1302884)(5.2689905,-3.9302883)
\psline[linecolor=black, linewidth=0.02](5.6689906,-3.1302884)(6.06899,-3.9302883)
\psline[linecolor=black, linewidth=0.02](5.6689906,-3.1302884)(6.8689904,-3.9302883)
\psline[linecolor=black, linewidth=0.02](5.6689906,-3.1302884)(7.6689906,-3.9302883)
\psline[linecolor=black, linewidth=0.02](5.6689906,-3.1302884)(8.46899,-3.9302883)
\psdots[linecolor=black, dotsize=0.14](8.46899,-3.9302883)
\psdots[linecolor=black, dotsize=0.06](8.66899,-3.9302883)
\psdots[linecolor=black, dotsize=0.06](8.86899,-3.9302883)
\psdots[linecolor=black, dotsize=0.06](9.068991,-3.9302883)
\end{pspicture}
}
\caption{A graph with no $\Aut(G)$-invariant tree-decomposition of finite adhesion.}
\label{fig:dom}
\end{figure}

Suppose for a contradiction that $G$ has a tree-decomposition $(T,P_t\mid t\in V(T))$ of finite adhesion that is invariant under the group of automorphisms and such that $T$ is one-ended.

There cannot be a single part $P_t$ that contains a ray of the canopy tree. To see that first note that there cannot be two such parts by the assumption of finite adhesion. Hence any such part would contain all vertices of the canopy tree from a certain level onwards. This is not possible by finite adhesion. 

Having shown that there cannot be a single part $P_t$ that contains a ray of the canopy tree, it must be that every part $P_t$ with $t$ near enough to the end of $T$ contains a vertex of the canopy tree. 

Our aim is to show that any vertex $u$ of degree 1 is in all parts. Suppose not for a contradiction. Then since $T$ is one-ended, there is a vertex $t$ of $T$ such that $t$ separates in $T$ all vertices $s$ with $u\in P_s$ from the end of $T$. We pick $t$ high enough in $T$ such that there is a vertex $v$ of the canopy tree in $P_t$. If $P_t$ contained all vertices of the orbit of $v$, then $P_t$ together with all parts $P_s$, where $s$ has some fixed bounded distance from $t$ in $T$, would contain a ray. This is impossible; the proof is similar as that that $P_t$ cannot contain a ray. Hence there is a vertex $v'$ in the orbit of $v$ that is not in $P_t$. Take an automorphism of $G$ that fixes $u$ and moves $v$ to $v'$. As the tree-decomposition is $\Aut(G)$-invariant, $T$ has a vertex $s$ such that $u,v'\in P_s$ but $v\notin P_s$. Since $T$ is $\Aut(G)$-invariant and one-ended, $t$ does not separate $s$ from the end of $T$. This is a contradiction as $u\in P_s$.

Hence $u$ must be in all parts. As $u$ was arbitrary, every vertex of degree one must be in every part. So the tree-decomposition does not have finite adhesion. This is the desired contradiction. Hence such a tree-decomposition does not exist.

\end{eg}

\section{A dichotomy result for automorphism groups}
\label{sec:dichotomy}

Before we turn to a proof of Theorem~\ref{dichotomy}, we state a few helpful auxiliary results. The 
following lemma can be seen as a consequence of \cite[Lemma~7]{zbMATH01701677}, but for 
completeness 
a direct proof is provided in Appendix B.

\begin{lem}
\label{lem:treefixtail}
If $T$ is a one-ended tree and $R$ is a ray in $T$, then every automorphism of $T$ fixes some tail 
of $R$ pointwise.
\end{lem}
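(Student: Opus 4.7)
The strategy has two parts: first show that $\varphi$ acts on a tail of $R$ as a translation by some integer $c$, then use one-endedness of $T$ to force $c=0$. Since $T$ has a unique end and $\varphi(R)$ is again a ray, both $R$ and $\varphi(R)$ belong to this end. In any tree, two rays in the same end must eventually coincide (since from any vertex at most one ray goes to a given end); hence for some $j_0$ we have $v_j \in \varphi(R)$ for all $j \geq j_0$. Writing $v_j = \varphi(v_{g(j)})$, the function $g$ is injective, satisfies $|g(j{+}1)-g(j)|=1$ by adjacency preservation, and $g(j)\to\infty$ because $\varphi^{-1}$ preserves the unique end. These conditions force $g(j{+}1)=g(j){+}1$ for large $j$, so there is an integer $c$ with $\varphi(v_j)=v_{j+c}$ for all sufficiently large $j$.

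Suppose for contradiction $c>0$ (the case $c<0$ follows by interchanging $\varphi$ and $\varphi^{-1}$). Fix $w = v_{j_1}$ with $j_1$ chosen so that $\varphi(v_j)=v_{j+c}$ holds for every $j \geq j_1$; then $\varphi^n(w)=v_{j_1+nc}$ are distinct for $n\geq 0$, so the iterates $\varphi^{-n}(w)$, $n\geq 0$, are also pairwise distinct. Because $T$ is one-ended, each vertex $u$ has a unique next vertex $p(u)$ on its ray to the end, and $\varphi$ commutes with $p$ (since $\varphi$ preserves the end). Hence for each $n\geq 0$ the unique path from $\varphi^{-(n+1)}(w)$ to $\varphi^{-n}(w)$ in $T$ is the initial length-$c$ segment of the ray from $\varphi^{-(n+1)}(w)$ to the end, using that $p^c(w)=v_{j_1+c}=\varphi(w)$. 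Concatenating these paths for $n=0,\ldots,N$ produces a walk of length $(N{+}1)c$, which equals $d(\varphi^{-(N+1)}(w), w)$, so the concatenation is a simple path in $T$. Letting $N\to\infty$ yields an infinite ray $P$ starting at $w$; its first edge goes to $\varphi^{-1}(v_{j_1+c-1})$, which cannot be $v_{j_1+1}$, as otherwise $\varphi(v_{j_1+1})$ would equal both $v_{j_1+c-1}$ and $v_{j_1+c+1}$.

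The ray $P$ together with the forward tail $w, v_{j_1+1}, v_{j_1+2},\ldots$ of $R$ forms a double ray in $T$ whose two tails leave $w$ through different edges and therefore lie in distinct components of $T-\{w\}$. These components contain rays representing distinct ends of $T$, contradicting one-endedness. Therefore $c=0$, and $\varphi$ fixes $v_j$ for all sufficiently large $j$.

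The main subtlety is that $T$ is not assumed locally finite, so a naive counting argument comparing the ``back-subtrees'' $B_i$ (components of $T - v_{i+1}$ containing $v_i$) may fail: such components can be infinite yet rayless, so $B_i \subsetneq B_{i+c}$ need not yield $|B_i| < |B_{i+c}|$. The double-ray construction bypasses this by directly producing a second end.
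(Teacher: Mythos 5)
Your proof is correct. Every step checks out: rays of a tree in the same end share a tail (and paths in trees are induced, so your step condition $|g(j{+}1)-g(j)|=1$ is justified); an injective $\pm 1$-step sequence tending to infinity is eventually increasing, giving the translation constant $c$; and for $c>0$ the backward iterates $\varphi^{-n}(w)$, joined by the length-$c$ segments of the canonical rays to the end (using $\varphi\circ p=p\circ\varphi$), produce a ray leaving $w$ through an edge different from $wv_{j_1+1}$, hence a double ray whose two tails lie in different components of $T-\{w\}$ and thus in different ends --- contradicting one-endedness. The symmetry $\varphi\leftrightarrow\varphi^{-1}$ disposes of $c<0$, so $c=0$ and a tail of $R$ is fixed pointwise.

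Your route is genuinely different from the paper's. The paper quotes Tits' classification of tree automorphisms (fix a vertex, invert an edge, or translate along a double ray), rules out inversions and translations using one-endedness (no double ray exists, and an inversion would swap the two components of $T-e$ while the end lies in only one), and then observes that the unique ray from a fixed vertex to the end is fixed pointwise and shares a tail with $R$. You instead reprove, from scratch, exactly the piece of that classification needed here: you extract a translation length $c$ along the eventual common tail of $R$ and $\varphi(R)$ and show $c\neq 0$ would manufacture a double ray, i.e.\ a second end. What the paper's argument buys is brevity and a clean appeal to standard structure theory; what yours buys is a self-contained elementary proof that needs no external citation, and it makes explicit why the one-endedness hypothesis kills translations. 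Your closing remark about non-locally-finite trees is a fair point about why a naive cardinality comparison of back-subtrees would not work, though neither your argument nor the paper's needs local finiteness.
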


The next result is Lemma 3 in \cite{zbMATH01701677}.   For completeness a proof is included in 
Appendix C.

\begin{lem}
\label{Lrayless}
The pointwise (and hence also the setwise) stabiliser of a finite set of vertices in the 
automorphism group of a rayless graph is either finite or contains at least $2^{\aleph_0}$ many 
elements.
\end{lem}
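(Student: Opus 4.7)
I proceed by transfinite induction on the rayless rank $r(G)$ of $G$: the least ordinal $\alpha$ such that some finite $S \subseteq V(G)$ leaves every component of $G - S$ of rank $< \alpha$, with $r(G)=0$ iff $G$ is finite. Every rayless graph admits such a rank by Schmidt's theorem. The base case is trivial since $\Aut(G)$ is then finite.

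For the inductive step I invoke the structural result that every rayless graph admits a finite $\Aut(G)$-invariant subset $K \subseteq V(G)$ such that every component of $G - K$ has strictly smaller rayless rank. Replacing the given finite set by $F \cup K$ passes to a subgroup of $\Aut(G)_{(F)}$ of index at most $|K|!$, and the dichotomy ``finite or $\geq 2^{\aleph_0}$'' is preserved under finite-index subgroups; so I may assume $F \supseteq K$. Then $H := \Aut(G)_{(F)}$ fixes $K$ pointwise and permutes the set $\mathcal C$ of components of $G - K$ via $\pi : H \to \mathrm{Sym}(\mathcal C)$. Write $\mathcal C^0$ for the (cofinite-in-$\mathcal C$) subset of components disjoint from $F$; note that $H$ fixes each component in $\mathcal C \setminus \mathcal C^0$ setwise.

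Now apply the independence property of full graph automorphism groups (Section~\ref{sec:autogroup}) in two ways. First, the kernel decomposes as an unrestricted product $\ker \pi \cong \prod_{C \in \mathcal C} \Aut(\bar C)_{(N(C))}$, where $\bar C = C \cup N_G(C)$ and $N(C) \subseteq K$. Second, for each $H$-orbit $\mathcal O \subseteq \mathcal C^0$, the pairwise isomorphisms between its members induced by elements of $H$ (which necessarily fix $N(C)$ pointwise, since $H$ fixes $K$ and $N(C)=N(C')$ for $C,C'$ in the same orbit) assemble into an embedding $\mathrm{Sym}(\mathcal O) \hookrightarrow H$: any permutation of $\mathcal O$ can be realised via the chosen isomorphisms and extended by the identity off $\bigcup \mathcal O$. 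The inductive hypothesis, applied to each $\bar C$ (rayless, rank $<r(G)$) stabilising its finite boundary $N(C)$, forces each factor of $\ker \pi$ to be finite or of size $\geq 2^{\aleph_0}$. The conclusion then follows by cases: (i) some factor has $\geq 2^{\aleph_0}$ elements; (ii) infinitely many factors are nontrivial, so $\ker \pi$ contains $(\mathbb{Z}/2)^{\aleph_0}$; (iii) some orbit $\mathcal O \subseteq \mathcal C^0$ is infinite, so $|\mathrm{Sym}(\mathcal O)| = 2^{\aleph_0}$; (iv) all orbits in $\mathcal C^0$ are finite but infinitely many nontrivial, so the embedded product of finite nontrivial symmetric groups again has cardinality $2^{\aleph_0}$. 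If none of (i)--(iv) holds, all the quantities involved are finite and $|H|$ itself is finite.

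\textbf{Main obstacle.} The hardest step is the Halin--Schmidt theorem asserting the existence of a finite $\Aut(G)$-invariant kernel $K$ with components of $G-K$ of strictly lower rank; this requires a careful definition of the kernel (essentially the set of vertices lying in every finite subset that realises the rank of $G$) and an argument that this set is finite and invariant. Once that structural input is available, the remainder is bookkeeping combining the independence property with the standard cardinal facts that $|\mathrm{Sym}(\aleph_0)| = 2^{\aleph_0}$ and that an unrestricted direct product of infinitely many nontrivial groups has cardinality at least $2^{\aleph_0}$.
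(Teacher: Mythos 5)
Your overall skeleton is the same as the paper's: transfinite induction on Schmidt's rank, base case finite, and then a case analysis driven by the independence property (one internal stabiliser already huge; infinitely many nontrivial internal stabilisers giving an unrestricted product; infinitely many interchangeable components giving independent swaps; otherwise everything finite). Your cases (i)--(iv) are in substance the paper's auxiliary Lemma~\ref{Lfiniteoruncountable}, with orbits and symmetric groups in place of ``infinitely many disjoint pairs of equivalent components''. Where you genuinely diverge is the structural input: you route everything through the existence of a finite $\Aut(G)$-invariant kernel $K$ witnessing the rank, so that passing to the pointwise stabiliser of $F\cup K$ is a finite-index reduction. The paper needs no such invariance: its Lemma~\ref{Lfiniteoruncountable} is stated for the pointwise stabiliser $\Gamma_{(D)}$ of an arbitrary set $D$, the point being that any automorphism fixing $D$ pointwise automatically permutes the components of $G-D$, so one may work with any rank-witnessing set $F$, invariant or not, and run exactly your case analysis on the components of $G-F$. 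That is the idea you missed, and it makes the heavy kernel theorem unnecessary. To be fair, your detour buys something: the invariance of $K$ is what lets you treat an arbitrary prescribed finite set $F$ cleanly via the index-at-most-$|K|!$ argument, a point the paper's write-up passes over rather quickly (its induction, as written, delivers the dichotomy for the stabiliser of a set whose removal lowers the rank of every component).

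The one real gap in what you wrote is that the kernel theorem is exactly the part you do not prove. It is a true result (it goes back to Schmidt's analysis of rayless graphs and is used by Halin), but your parenthetical construction --- ``the set of vertices lying in every finite subset that realises the rank'' --- is asserted, not argued: it is not obvious that this intersection is itself rank-witnessing (for rank $1$ it is, being the set of vertices of infinite degree, but the general case needs a genuine proof or a precise citation). Since you yourself label this the main obstacle, as it stands the proposal is conditional on an unproved black box that the paper's argument shows can be avoided entirely. Two smaller slips: the factor of $\ker\pi$ at a component $C$ meeting $F$ is the pointwise stabiliser of $N(C)\cup(F\cap C)$ in $\Aut(\bar C)$, not just of $N(C)$ (harmless, since your inductive hypothesis covers arbitrary finite sets); and an unrestricted product of infinitely many nontrivial groups has cardinality at least $2^{\aleph_0}$ but need not contain a copy of $(\mathbb{Z}/2)^{\aleph_0}$ --- only the cardinality statement is needed, and that part is fine.
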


The next result is an extension of Lemma~\ref{Lrayless} to one-ended graphs where the end has finite 
vertex degree.

\begin{lem}
\label{lem:dichotomy}
Let $G$ be a graph with only one end $\omega$.  Assume that $\omega$ has finite vertex degree $k$.  Let $X$ be a finite set of 
vertices in $G$ that contains all the vertices that dominate the end.  If the graph $G-X$ is 
connected then the pointwise stabiliser of $X$ in $\Aut(G)$ is either finite or contains at least 
$2^{\aleph_0}$ many elements.
\end{lem}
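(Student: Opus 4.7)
The plan is to reduce the dichotomy to Lemma~\ref{Lrayless} via a tree-decomposition of $G-X$. Since $G-X$ is connected, inherits the unique end $\omega'$ of $G$ (of vertex degree $\le k$), and $\omega'$ is undominated in $G-X$ (any such dominator would dominate $\omega$ in $G$, forcing it into $X$), Theorem~\ref{main_td} applied to $G-X$ yields an $\Aut(G-X)$-invariant tree-decomposition $(T,(P_t)_{t\in V(T)})$ of adhesion $\le k$, with $T$ one-ended and displaying $\omega'$. The natural restriction embeds $H:=\Aut(G)_{(X)}$ in $\Aut(G-X)$, so $H$ acts on $T$ fixing its end. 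Fix a ray $R=t_0,t_1,\ldots$ in $T$; by Lemma~\ref{lem:treefixtail}, $H=\bigcup_{j\ge 0}H^{(j)}$ where $H^{(j)}=\{h\in H:h(t_i)=t_i\text{ for all }i\ge j\}$. For each $j$ write $(A_j,B_j)$ for the separation of $G-X$ at the tree-edge $t_jt_{j+1}$ (with $A_j$ the lower side), $S_j=A_j\cap B_j$ (so $|S_j|\le k$), and $Y_j=X\cup S_j$. A ray in $A_j$ would lie in $\omega'\subseteq B_j$ and hence have a tail in the finite set $S_j$, which is impossible; so $A_j$ and therefore $G[A_j\cup X]$ is rayless, and $(A_j\cup X,B_j\cup X)$ is a separation of $G$ with separator $Y_j$.

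Assume $H$ is infinite; I aim to produce $2^{\aleph_0}$ distinct elements of $H$. By Lemma~\ref{Lrayless}, $\Aut(G[A_j\cup X])_{(Y_j)}$ is either finite or has at least $2^{\aleph_0}$ elements. In the latter case, for each such $\pi$ the map $\tilde\pi$ agreeing with $\pi$ on $A_j\cup X$ and with the identity on $B_j\cup X$ is a well-defined bijection of $V(G)$ preserving all edges (no edge of $G$ crosses between $A_j\setminus S_j$ and $B_j\setminus S_j$), so $\tilde\pi\in\Aut(G)$; it fixes $X$, hence $\tilde\pi\in H$, and $\pi\mapsto\tilde\pi$ is injective, giving $|H|\ge 2^{\aleph_0}$. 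So assume $\Aut(G[A_j\cup X])_{(Y_j)}$ is finite for every $j$, and (the principal case) that some $H^{(j_0)}$ is infinite. Let $M^{(j)}\le H^{(j)}$ be the normal subgroup of index at most $k!$ consisting of elements fixing $S_j$ pointwise, and let $K_j\le M^{(j)}$ be the kernel of the restriction $M^{(j)}\to\Aut(G[A_j\cup X])_{(Y_j)}$; then each $K_j$ for $j\ge j_0$ is infinite and coincides with the subgroup of elements of $H$ fixing $A_j\cup X$ pointwise.

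I now construct inductively indices $j_0<j_1<j_2<\cdots$ and non-trivial $\sigma_k\in H$ with $\mathrm{supp}(\sigma_k)\subseteq A_{j_{k+1}}\setminus(A_{j_k}\cup S_{j_{k+1}})$. Given $j_k$, suppose for contradiction that for every $j'>j_k$ the restriction $K_{j_k}\cap M^{(j')}\to\Aut(G[A_{j'}\cup X])_{(Y_{j'})}$ has trivial image; then $K_{j_k}\cap M^{(j')}=K_{j'}$, and by the second isomorphism theorem $[K_{j_k}:K_{j'}]\le[H^{(j')}:M^{(j')}]\le k!$. Hence the decreasing chain $(K_{j'})_{j'>j_k}$ consists of subgroups of $K_{j_k}$ of uniformly bounded index; the increasing bounded sequence of integers $[K_{j_k}:K_{j'}]$ stabilises, and nested subgroups of equal finite index coincide, so the chain is eventually constant at some infinite subgroup $K_\ast\le\bigcap_{j'>j_k}K_{j'}$. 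But $\bigcap_{j'>j_k}K_{j'}=\{1\}$, since $\bigcup_{j'}A_{j'}=V(G-X)$ and elements of this intersection fix every vertex of $V(G)$ pointwise, a contradiction. Therefore some $j_{k+1}>j_k$ admits $h\in K_{j_k}\cap M^{(j_{k+1})}$ with $h|_{A_{j_{k+1}}\cup X}\ne\mathrm{id}$; the independence property applied to the separation $(A_{j_{k+1}}\cup X,B_{j_{k+1}}\cup X)$ and to $h$ (which fixes $Y_{j_{k+1}}$ pointwise and stabilises both sides) produces $\sigma_k\in\Aut(G)$ agreeing with $h$ on $A_{j_{k+1}}\cup X$ and equal to the identity on $B_{j_{k+1}}\cup X$; then $\sigma_k$ fixes $X$, so $\sigma_k\in H$, it is non-trivial, and, since $h$ also fixes $A_{j_k}\cup X$ pointwise, is supported in $A_{j_{k+1}}\setminus(A_{j_k}\cup S_{j_{k+1}})$.

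The supports of the $\sigma_k$ are pairwise disjoint subsets of $V(G-X)$, and no edge of $G$ joins two distinct supports (such an edge would lie in $G-X$ and cross a separation $(A_{j_{k+1}},B_{j_{k+1}})$ of $G-X$). Thus for every $S\subseteq\mathbb N$ the product $\varphi_S:=\prod_{k\in S}\sigma_k$ is a well-defined automorphism of $G$ fixing $X$, so $\varphi_S\in H$, and distinct $S$ yield distinct $\varphi_S$ (they differ on the support of any $\sigma_k$ indexed by an element of the symmetric difference); whence $|H|\ge 2^{\aleph_0}$. The main obstacle is the stabilisation step in the inductive construction; a secondary case, not treated above, is when every $H^{(j)}$ is finite yet $H$ is countably infinite, which can be handled by a parallel construction exploiting non-trivial elements of successive cosets $H^{(j+1)}/H^{(j)}$ together with the independence property.
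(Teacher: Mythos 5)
Your overall skeleton is the same as the paper's: build the $\Aut(G-X)$-invariant nested set and one-ended tree for $G-X$, add $X$ to both sides of each separation, fix a ray in the tree, use Lemma~\ref{lem:treefixtail} so that every element of $H=\Aut(G)_{(X)}$ eventually stabilises the separations along the ray, use Lemma~\ref{Lrayless} together with the independence property on the rayless sides $A_j\cup X$, and finish by combining infinitely many automorphisms with escaping supports to get $2^{\aleph_0}$ elements. Your ``principal case'' (some $H^{(j_0)}$ infinite) is essentially sound: the second-isomorphism/index-stabilisation argument does produce, for each $j_k$, a later index and a non-trivial element fixing $A_{j_k}\cup X\cup S_{j_{k+1}}$ pointwise and supported in $A_{j_{k+1}}$, and the disjoint-support product argument then works.

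The genuine gap is the ``secondary case'' you leave to one sentence: all $H^{(j)}$ finite but $H$ infinite. This case cannot be waved away --- it is precisely the potentially countably infinite scenario the lemma is designed to exclude ($H=\bigcup_j H^{(j)}$ is then a countable increasing union of finite groups), and your proposed fix does not work as stated: a non-trivial element of $H^{(j+1)}\setminus H^{(j)}$ does not stabilise the separations $(A_i\cup X,B_i\cup X)$ for $i\le j$ (it moves the corresponding tree edges), so the independence property cannot be applied to it, and it gives you no element fixing a large set $A_j\cup X$ or $B_j\cup X$ pointwise --- which is exactly what your product construction needs. Manufacturing such elements when every $H^{(j)}$ is finite is the crux, and it is where the paper does real work: from any $(k+|X|)!+1$ distinct non-trivial elements of $H$ (all lying in some $H^{(n)}$ and acting differently on $A_n$ for $n$ large, since the $A_n$ exhaust $V(G)$), a pigeonhole on the adhesion set $Y_n$ yields two agreeing on $Y_n$, whose quotient is non-trivial on $A_n$, fixes $Y_n$ pointwise and stabilises the sides; independence then gives a non-trivial element fixing $B_n\cup X$ pointwise. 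If some such pointwise stabiliser is infinite one finishes with Lemma~\ref{Lrayless}; otherwise one still needs non-trivial elements fixing $A_n\cup X$ pointwise, and the paper obtains these by a conjugation trick: if that stabiliser were trivial then $H^{(n)}$ would be finite, so one can pick $\sigma\in H\setminus H^{(n)}$, observe $\sigma(A_n)\subseteq B_n$, and conjugate a non-trivial element of the pointwise stabiliser of $B_n$ into the pointwise stabiliser of $A_n$. Without an argument of this kind (pigeonhole on adhesion sets plus conjugation, or an equivalent), your proof covers only the case where some $H^{(j_0)}$ is infinite and therefore does not yet prove the lemma.
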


\begin{proof}
Denote by $\Gamma$ the pointwise stabiliser of $X$ in $\Aut(G)$. If $\Gamma$ is finite, then there 
is 
nothing to show, hence assume that $\Gamma$ is infinite.

Consider a nested $ \Aut (G-X)$-invariant set of $\omega$-relevant separations of $G-X$ as in 
Theorem~\ref{thm:aut_G_inv_td} and a  tree $T$ built from this set in the way described. Clearly 
$\Gamma$ gives rise to a subgroup of $\Aut (G - X)$ whence this nested set is $\Gamma$-invariant. 
Adding $X$ to both sides of every separation in $\cal S$ gives rise to a new $\Gamma$ invariant set 
$\cal S$ of nested separations such that each separation has order $k+|X|$.  The tree we get from 
$\cal S$ is the same as $T$.  From now on we will work with $\cal S$.

Every element $\gamma \in \Gamma$ induces an automorphism of $T$. Note that this canonical 
action of $\Gamma$ on $T$ is in general not faithful, i.e.\ it is possible that different elements 
of $\Gamma$ induce the same automorphism of $T$.

Let $R$ be a ray in $T$ and let $(e_n)_{n \in \mathbb N}$ be the family of edges of $R$ (in the 
order in which they appear on $R$). Let $(A_n,B_n)$ be the separation of $G$ corresponding to 
$e_n$. 
Denote by $\Gamma_n$ the stabiliser of $e_n$ in $\Gamma$. By Lemma~\ref{lem:treefixtail} every 
automorphism of $T$ (and hence also every $\gamma \in \Gamma$) fixes some tail of $R$, so 
$\Gamma_n$ 
is non-trivial for large enough $n$. Furthermore, $\Gamma_n$ is a subgroup of $\Gamma_m$ whenever 
$n 
\leq m$. 

We claim that for all but finitely many $n$, we have at least one non-trivial $\gamma$ in the 
pointwise stabiliser of $B_n$. To see this, let $\gamma_1, \ldots, \gamma_{(k+|X|)!+1}$ be a set of 
$(k+|X|)!+1$ different non-trivial automorphisms in $\Gamma$. Choose $n$ large enough such that they 
all are contained in $\Gamma_n$ and act differently on $A_n$. By a simple pigeon hole argument, at 
least two of them, $\gamma_1$ and $\gamma_2$ say, have the same action on $A_n \cap B_n$. Then 
$\gamma_1 \circ \gamma_2^{-1}$ is an automorphism which fixes $A_n \cap B_n$ pointwise, and fixes 
$A_n$ setwise but not pointwise. Now, using the \emph{independence property} from 
Section~\ref{sec:autogroup} we can define an automorphism 
\[
	\gamma(x) =
	\begin{cases}
	\gamma_1 \circ \gamma_2^{-1}(x) & \text{if }x \in A_n\sm B_n\\
	x & \text{if }x \in B_n
	\end{cases}
\]
with the desired properties.

Note that the subgroup leaving $A_n$ invariant in the pointwise stabiliser of $B_n$ in $\Gamma$ 
induces the same permutation group on  the rayless graph induced by $A_n$ in $G$ as does the 
subgroup leaving $A_n$ invariant in the pointwise stabiliser of $A_n\cap B_n$. 
Hence, if there is $n \in \mathbb N$ such that the pointwise stabiliser of $B_n$ in $\Gamma$ is 
infinite, then this stabiliser contains at least $2^{\aleph_0}$ many elements by Lemma~\ref{Lrayless}.

So (by passing to a tail of $R$) we may assume that the pointwise stabiliser of $B_n$ is a finite 
but non-trivial subgroup of $\Gamma$ for every $n \in N$.

Next we claim that for every $n$ there is a non-trivial automorphism in the pointwise stabiliser of 
$A_n $. If not, then $\Gamma_n$ is finite and we choose $\sigma \in \Gamma \sm \Gamma_n$. For an 
edge $e$ of $T$, denote by $T_e$ the component of $T-e$ which does not contain the end of $T$.  
Clearly $\sigma(T_e) = T_{\sigma (e)}$ for every edge $e$. In particular, if $e = e_m$ is the last 
edge of $R$ which is not fixed by $\sigma$, then clearly $\sigma (T_e) \se T - T_{e}$. Furthermore 
$n < m$,  so $A_n \se A_m$, and $B_m \se B_n$. Hence $\sigma(A_n) 
\se \sigma(A_m) 
\se  B_m
\se B_n$.
Now let $\gamma$ be a nontrivial automorphism in the pointwise stabiliser of $B_n$. Then 
$\sigma^{-1} \circ \gamma \circ \sigma$ is easily seen to be a nontrivial element of the pointwise 
stabiliser of $A_n$: for $a \in A_n$ we have
\[
\sigma^{-1} \circ \gamma \circ \sigma (a) = \sigma^{-1} \circ \sigma (a) = a
\]
since $\sigma(a) \in B_n$ is fixed by $\gamma$.

Now define an infinite sequence $(\gamma_k)_{k \in \mathbb N}$ of elements of $\Gamma$ as follows. 
Pick a nontrivial $\gamma_1$ in the pointwise stabiliser of $A_1$. Assume that $\gamma_i$ has been 
defined for $i < k$, then let $n_k$ be such that $\gamma_i$ acts non-trivially on $A_{n_k}$ for all 
$i<k$ and pick a nontrivial element $\gamma_k$ in the pointwise stabiliser of $A_{n_k}$. For an 
infinite $0$-$1$-sequence $(r_j)_{j\geq 1}$, define
\[
	\psi_i = \gamma_i^{r_i} \circ \gamma_{i-1}^{r_{i-1}} \circ \cdots \circ \gamma_1^{r_1},
\]
in other words, $\psi_n$ is the composition of all $\gamma_j$ with $j\leq n$ and $r_j = 1$. Finally 
define $\psi$ to be the limit of the $\psi_n$ in the topology of pointwise convergence. This limit 
exists, because for $j > i$ the restriction $\psi_i$ and $\psi_j$ to $A_{n_i}$ coincide, and the 
$A_{n_i}$ exhaust $V(G)$. By Lemma~\ref{aut-closed}, $\psi$ is contained in $\Aut(G)$  and is also in 
$\Gamma \se \Aut(G)$ because every $\psi_i$  stabilises $X$ pointwise. 

Finally assume that we have two different $0$-$1$-sequences $(r_j)_{j\geq 1}$ and $(r'_j)_{j\geq 
1}$ 
and let $(\psi_j)_{j\geq 1}$ and $(\psi'_j)_{j\geq 1}$ be the corresponding sequences of 
automorphisms. If $l$ is the first index such that $r_{l} \neq r_{l}'$ then the restrictions of  
$\psi_{l}$ and $\psi_{l}'$ (and hence also of $\psi_i$ and $\psi_i'$ for $i>l$) to $A_{n_l}$ 
differ. 
Hence different $0$-$1$-sequences give different elements of $\Gamma$ and $\Gamma$ contains at 
least 
$2^{\aleph_0}$ many elements.
\end{proof}

\begingroup
\def\thethm{\ref{dichotomy}}
\begin{thm}
Let $G$ be a graph with one end which has finite vertex degree. Then $\Aut(G)$ is either finite or 
has at least $2^{\aleph_0}$ many elements.
\end{thm}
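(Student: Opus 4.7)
The plan is to adapt the proof of \autoref{lem:dichotomy} to accommodate a possibly dominated end. If $\Aut(G)$ is finite we are done, so assume it is infinite. Let $D$ denote the set of vertices dominating $\omega$; by \autoref{lem:finite_dominating}, $D$ is finite, and clearly $D$ is $\Aut(G)$-invariant, so the pointwise stabiliser $\Gamma := \Aut(G)_{(D)}$ has index at most $|D|!$ in $\Aut(G)$. In particular $\Gamma$ is infinite, and it is enough to exhibit $2^{\aleph_0}$ of its elements.

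The obstacle to invoking \autoref{lem:dichotomy} directly with $X = D$ is that $G - D$ may fail to be connected. I would get around this by letting $C$ be the (unique) component of $G - D$ containing $\omega$ and letting $H := V(G) \setminus (D \cup C)$ collect the remaining components, each of which is rayless because every ray of $G$ lies in $\omega$ and hence eventually in $C$. The subgraph $C$ is connected and one-ended with undominated end of vertex degree $k$, so \autoref{thm:aut_G_inv_td} supplies a nested $\Aut(C)$-invariant set $\mathcal S'$ of $\omega$-relevant separations of $C$ together with a one-ended tree $T$. I would extend this to $G$ by setting
\[
\mathcal S := \{\,(A' \cup D \cup H,\; B' \cup D) : (A',B') \in \mathcal S'\,\}.
\]
Each member is a separation of $G$ with separator of size $k + |D|$, the order-preserving bijection with $\mathcal S'$ gives the same tree $T$, and $\mathcal S$ is $\Aut(G)$-invariant because each $\gamma \in \Aut(G)$ fixes $D$, $C$, and $H$ setwise and restricts to an element of $\Aut(C)$.

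I would then rerun the argument of the proof of \autoref{lem:dichotomy} with this family $\mathcal S$ and the group $\Gamma$. The key observation is that for any ray in $T$ with successive edges $e_n$ corresponding to separations $(A_n,B_n) \in \mathcal S$, the induced subgraph $G[A_n]$ is rayless: a ray in it lies in $\omega$, hence has almost all its vertices in $B'_n \setminus A'_n$, contradicting containment in $A_n$, while a tail sitting entirely in $D \cup H$ is impossible because $G[D \cup H]$ is rayless. By \autoref{lem:treefixtail}, $\Gamma$ is the union of the stabilisers $\Gamma_n$ of $e_n$. A pigeonhole argument on the action of $\Gamma$ on the size-$(k+|D|)$ separator $A_n \cap B_n$, combined with the independence property of $\Aut(G)$, produces for all sufficiently large $n$ a nontrivial element of the pointwise stabiliser $H_n$ of $B_n$ in $\Gamma$. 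Now \autoref{Lrayless} applied to the rayless graph $G[A_n]$ dichotomises $H_n$: either $|H_n| \geq 2^{\aleph_0}$ and we are done, or $H_n$ is finite. In the latter case, conjugating a nontrivial element of $H_n$ by some $\sigma \in \Gamma \setminus \Gamma_n$ yields a nontrivial element of the pointwise stabiliser of $A_n$ for each $n$, and assembling these along $0$-$1$-sequences, together with \autoref{aut-closed}, produces $2^{\aleph_0}$ distinct limit automorphisms in $\Gamma$.

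The main obstacle I expect is verifying that the enlarged family $\mathcal S$ retains all the structural properties needed to re-run the proof of \autoref{lem:dichotomy}: nestedness, full $\Aut(G)$-invariance, one-endedness of $T$, and most importantly the raylessness of $G[A_n]$ after amalgamating the rayless vertex set $H$ with $D$ onto the small side --- which is precisely why all of $H$ must be placed on the $A$-side of each separation.
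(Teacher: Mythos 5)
Your reduction to the pointwise stabiliser $\Gamma$ of the finite invariant set $D$, the verification that $C$ is one-ended with an undominated end of finite vertex degree, the construction of $\mathcal{S}$ by adding $D$ to both sides and $H$ to the small side, and the observation that $G[A_n]$ is rayless (so that Lemma~\ref{Lrayless} controls the pointwise stabiliser of $B_n$) are all sound, and up to that point you are genuinely re-running the proof of Lemma~\ref{lem:dichotomy}. The gap is in the second half, and it is caused precisely by having put $H$ on the $A$-side. The conjugation step of Lemma~\ref{lem:dichotomy} rests on the inclusion $\sigma(A_n)\subseteq\sigma(A_m)\subseteq B_m\subseteq B_n$; in your setting $\sigma(H)=H\subseteq A_n\setminus B_n$, so $\sigma(A_n)\not\subseteq B_n$ whenever $H\neq\emptyset$. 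Consequently $\sigma^{-1}\gamma\sigma$ is only guaranteed to fix $A'_n\cup D$ pointwise, not $A_n=A'_n\cup D\cup H$; worse, the nontrivial element $\gamma$ of the pointwise stabiliser of $B_n$ produced by the pigeonhole argument may be supported entirely on $H$ (the two chosen automorphisms may agree on $C\cup D$ and differ only on $H$), and then every conjugate of it is supported on $H$ as well, so you never obtain a nontrivial element of the pointwise stabiliser of $A_n$. This also breaks the final limit construction: its convergence and the distinctness of the $2^{\aleph_0}$ limits require each $\gamma_k$ to fix the exhausting sets $A_{n_k}\supseteq H$ pointwise, so that the supports escape towards the end; with elements that keep acting on the common set $H$, the sequence $\psi_i(v)$ need not stabilise for $v\in H$ and different $0$-$1$-sequences need not give different automorphisms.

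What is missing is exactly the ingredient the paper supplies at this point: a way to handle automorphisms whose action is concentrated on the rayless part when each individual contribution is finite. The paper's proof of Theorem~\ref{dichotomy} does not enlarge the separations at all; it applies Lemma~\ref{lem:dichotomy} only to $C\cup D$ (where the connectivity hypothesis holds), applies Lemma~\ref{Lrayless} to each rayless component, and then combines the ``finite or $2^{\aleph_0}$'' information over all components by the gluing argument of Lemma~\ref{Lfiniteoruncountable} (infinitely many nontrivial component stabilisers, or infinitely many disjoint pairs of interchangeable components, already yield $2^{\aleph_0}$ elements). Your Case 1 absorbs part of this (an infinite pointwise stabiliser of some $B_n$), but in your Case 2 you would still have to separate the action on $C$ from the action on $H$ --- for instance by running the pigeonhole inside the subgroup fixing $D\cup H$ pointwise and treating the $H$-part by a Lemma~\ref{Lfiniteoruncountable}-type argument --- which is essentially the component-wise route you set out to avoid. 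As written, the proposal does not close that case.
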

\addtocounter{thm}{-1}
\endgroup

\begin{proof}
Let $X$ be the set of vertices which dominate $\omega$.  This set is possibly empty and by 
Lemma~\ref{lem:finite_dominating} it is finite.  Every automorphism stabilises $X$ setwise.  
Therefore the pointwise stabiliser of $X$ is a normal subgroup of $\Aut(G)$ with finite index. So it 
suffices to show that the conclusion of Theorem~\ref{dichotomy} holds for the stabiliser $\Gamma$ of 
$X$.

For every component $C$ of $G-X$ let $\Gamma_C$ be the pointwise stabiliser of $X$ in $\Aut(C \cup 
X)$. Then $\Gamma_C$ is either finite or contains at least $2^{\aleph_0}$ many elements by 
Lemma~\ref{Lrayless} and Lemma~\ref{lem:dichotomy}. If $|\Gamma_C|=2^{\aleph_0}$ for some component $C$ 
then we need do no more.  So assume that all the groups $\Gamma_C$ are finite. The same argument as 
used towards the end of the proof of Lemma~\ref{Lrayless} (see Appendix C) now shows that either $\Gamma$ is finite 
or has at least cardinality $2^{\aleph_0}$.
\end{proof}

As a corollary we can answer a question posed by Boutin and Imrich in \cite{imrichboutin}.
In order to state this question, we first need some notation.
For a vertex $v$ in a graph $G$ we define $B_v(n)$, \emph{the ball of radius $n$ centered at $v$},  
as the set of all vertices in $G$ in distance at most $n$ from $v$.  We also define $S_v(n)$, 
\emph{the sphere of radius $n$ centered at $v$},  as the set of all vertices in $G$ in distance 
exactly $n$ from $v$.  A connected locally finite graph is said to have \emph{linear growth} if 
there is a constant $c$ such that $|B_v(n)|\leq cn$ for all $n= 1, 2, \ldots$.   It is an easy 
exercise to show that the property of having linear growth does not depend on the choice of the 
vertex $v$.  

In relation to their work on the distinguishing cost of graphs Boutin and Imrich \cite{imrichboutin} 
ask whether there exist one-ended locally finite graphs that has linear 
growth and countably infinite 
automorphism group.  

If $G$  is a locally finite graph with linear growth and $v$ is a vertex in $G$ then there is a 
constant $k$ such that $|S_v(n)|=k$ for infinitely many values of $n$.  (This is observed by Boutin 
and Imrich in their paper \cite[Fact 2 in the proof of Proposition 13]{imrichboutin}.)  From this 
we  deduce that the vertex-degree of an end of $G$ is at most equal to $k$, since each ray in $G$ 
must pass through all but finitely many of the spheres $S_v(n)$.  Using Theorem~\ref{dichotomy} one can 
now give a negative answer to the above question.

\begin{thm}\label{thm14}
If $G$ is a connected locally finite graph with one end and linear growth, then the automorphism group of $G$ is 
either finite or contains exactly $2^{\aleph_0}$ many elements.
\end{thm}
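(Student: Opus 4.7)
The plan is to read off the finite-vertex-degree hypothesis of Theorem~\ref{dichotomy} from the linear growth assumption, apply that theorem, and then combine its conclusion with the trivial upper bound $|\Aut(G)| \leq 2^{\aleph_0}$ that follows from countability of $G$.

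First I would observe that since $G$ is connected and locally finite, the vertex set $V(G)$ is countable, and so $\Aut(G) \subseteq \operatorname{Sym}(V(G))$ has cardinality at most $2^{\aleph_0}$. This gives the upper bound for free.

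Next, as already noted in the text just before the theorem, linear growth implies that for any fixed vertex $v$ there is a constant $k$ such that $|S_v(n)| = k$ for infinitely many $n$; since every ray in $G$ must, from some point on, meet each sphere $S_v(n)$ in at least one vertex, a family of pairwise disjoint rays has size at most $k$. Hence the (unique) end $\omega$ of $G$ has finite vertex degree. I would just cite this observation.

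At this point Theorem~\ref{dichotomy} applies to $G$ and gives that $\Aut(G)$ is either finite or has at least $2^{\aleph_0}$ elements. Combining this with the upper bound from the first step yields that $\Aut(G)$ is either finite or has exactly $2^{\aleph_0}$ elements, which is the claim. There is no real obstacle here; the theorem is essentially a corollary packaging Theorem~\ref{dichotomy} together with the linear-growth-implies-thin-end observation and the countability bound.
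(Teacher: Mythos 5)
Your proposal is correct and is essentially identical to the paper's own proof: the paper likewise notes that countability of $G$ bounds $\Aut(G)$ by $2^{\aleph_0}$, uses the Boutin--Imrich observation that linear growth forces $|S_v(n)|=k$ for infinitely many $n$ (so the end has finite vertex degree), and then applies Theorem~\ref{dichotomy}. Nothing further is needed.
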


\begin{proof}
Since $G$ is locally finite and connected, the graph $G$ is countable.   Hence the 
automorphism group cannot contain more than $2^{\aleph_0}$ many elements.  Furthermore linear 
growth 
implies that all ends must have finite vertex degree, hence we can apply Theorem~\ref{dichotomy}.
\end{proof}

In particular a connected graph with linear growth and a countably infinite autormorphism group 
cannot have one end.
  Thus one can strengthen \cite[Theorem 22]{imrichboutin} and get:

\begin{thm}  {\rm (Cf.~\cite[Theorem 22]{imrichboutin})} Every locally finite connected graph with 
linear growth and countably infinite automorphism group has 2 ends.
\end{thm}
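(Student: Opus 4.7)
The plan is to combine the previous theorem (\autoref{thm14}) with Boutin and Imrich's original \cite[Theorem~22]{imrichboutin}, which already constrains the number of ends of such a graph to be small (at most $2$). The new content is the ability to rule out the one-ended case, which is exactly what \autoref{thm14} provides.

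More concretely, I would proceed as follows. First I would note that a graph with a countably infinite automorphism group must itself be infinite, since a finite graph has a finite automorphism group. Since $G$ is locally finite, connected, and infinite, K\"onig's lemma guarantees the existence of a ray, so $G$ has at least one end. Next I would invoke \autoref{thm14}: if $G$ had exactly one end, then because $G$ is connected and locally finite with linear growth, the end would automatically have finite vertex degree, and \autoref{thm14} would force $\Aut(G)$ to be either finite or of cardinality $2^{\aleph_0}$, contradicting the hypothesis that $\Aut(G)$ is countably infinite. Hence $G$ has at least two ends. Finally I would apply \cite[Theorem~22]{imrichboutin} (or the elementary growth argument given there, based on counting the vertices on three disjoint rays together with any separator that realises three ends) to conclude that linear growth forces at most two ends. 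Combining the bounds yields exactly two ends.

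The only non-trivial input is \autoref{thm14} itself; the remaining ingredients are either elementary (the infiniteness of $G$, K\"onig's lemma) or already available in \cite{imrichboutin}. The main conceptual step, therefore, is the one already carried out: using the tree-decomposition of \autoref{main:intro} to prove the dichotomy \autoref{dichotomy}, and thereby \autoref{thm14}, which is what enables the elimination of the one-ended case. No further obstacle is expected in this final argument beyond the bookkeeping of combining the three constraints on the number of ends.
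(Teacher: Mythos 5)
Your proposal matches the paper's own (implicit) derivation: the paper obtains this statement by combining \autoref{thm14} --- which rules out the one-ended case --- with Boutin and Imrich's \cite[Theorem 22]{imrichboutin}, exactly as you do, and the elementary observations that the graph is infinite and therefore has at least one end are fine. One caution: your parenthetical suggestion that the upper bound of two ends could instead be obtained by an elementary growth argument (counting the vertices on three disjoint rays together with a separator realising three ends) is not correct --- three pairwise disjoint rays are perfectly compatible with linear growth (a subdivided star with three infinite legs has linear growth and three ends), so linear growth alone does not force at most two ends; that bound genuinely needs the hypothesis of a countably infinite automorphism group, i.e.\ the full strength of the cited result of Boutin and Imrich rather than a pure counting argument. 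Since your primary appeal is to \cite[Theorem 22]{imrichboutin} itself, the proof stands as written.
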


Furthermore one can in \cite[Theorem 18]{imrichboutin} remove the assumption that the graph is 
2-ended, since it is implied by the other assumptions.

\section{Ends of quasi-transitive graphs}
\label{sec:qt-ends}

Finally, another application was 
pointed out to the authors by Matthias Hamann. Recall that a graph is called \emph{transitive}, if all vertices lie in the same orbit  under the automorphism group, and \emph{quasi-transitive} (or \emph{almost-transitive}), if there are 
only finitely many orbits on the vertices. 

The groundwork for the study of automorphisms of infinite graphs was laid in the 1973 paper of Halin \cite{zbMATH03417497}.  Among the results there is a classification of automorphisms of a connected infinite graph, see \cite[Sections 5, 6 and 7]{zbMATH03417497}.   \emph{Type 1} automorphisms, to use Halin's terminology, leave a finite set of vertices invariant.  An automorphism is said to be of \emph{type 2} if it is not of type 1.  Type 2 automorphism are of two kinds, the first kind fixes precisely one end which is then thick (i.e.\ has infinite vertex degree) and the second kind fixes precisely two ends which are then both thin (i.e.\ have finite vertex degrees).  In Halin's paper these results are stated with the additional assumption that the graph is locally finite but the classification remains true without this assumption.  

It is a well known fact that a connected, transitive graph has either $1$, $2$, or infinitely many ends (follows for locally finite graphs from Halin's paper \cite[Satz~2]{zbMATH03202996} and for the general case see \cite[Corollary 4]{zbMATH00446364}). It is a consequence of a result of Jung \cite{zbMATH03769659} that if such a graph has more than one end then there is a type 2 automorphism that fixes precisely two ends and thus the graph has at least two thin ends.  In particular, in the two-ended case both of the ends must be thin. Contrary to this, we deduce from Theorem 
\ref{thm:aut_G_inv_td} that the end of a one-ended transitive graph is always thick. This even holds in the more general case of quasi-transitive graphs.   This was proved for locally finite graphs by Thomassen \cite[Proposition~5.6]{zbMATH00130612}.  A variant of this result for \emph{metric ends} was proved by Kr\"on and M\"oller in \cite[Theorem~4.6]{zbMATH05349235}.

\begin{thm}
\label{thm:oneend-qt-thick}
If $G$ is a one-ended, quasi-transitive graph, then the unique end is thick.
\end{thm}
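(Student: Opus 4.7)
Suppose for contradiction that $G$ is one-ended and quasi-transitive, with unique end $\omega$ of finite vertex degree $k$.

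\emph{Reduction to an undominated end.} Because $\omega$ is fixed by every automorphism, the set $X$ of vertices dominating $\omega$ is $\Aut(G)$-invariant; it is finite by Lemma~\ref{lem:finite_dominating}. Let $C$ be the unique infinite component of $G-X$. A routine check shows that $C$ is connected and one-ended, its end has vertex degree at most $k$, no vertex of $C$ dominates it (any finite $G$-separation of a vertex from $\omega$ restricts to a finite $C$-separation), and $\Aut(G)$ acts on $V(C)$ with finitely many orbits. Replacing $G$ by $C$, we may henceforth assume $\omega$ is undominated.

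\emph{Tree-decomposition and the invariant Busemann function.} Apply Theorem~\ref{main_td}: we obtain an $\Aut(G)$-invariant tree-decomposition $(T,(P_\tau)_{\tau\in V(T)})$ of finite adhesion with $T$ one-ended and displaying $\omega$. Running the construction in the proof of Theorem~\ref{thm:aut_G_inv_td} starting from a sequence $(A_n,B_n)$ as in Theorem~\ref{Tsequence}, $T$ contains a ray $\tau_1,\tau_2,\ldots$ converging to its unique end $\mu$ with $\tau_n=(A_n,B_n)$, and each part $P_{\tau_n}$ is non-empty; fix $v_n\in P_{\tau_n}$. Let $b:V(T)\to\mathbb Z$ be the Busemann function based at $\mu$, normalized so that $b(\tau_n)=n$; as $T$ is a tree, $b$ is $1$-Lipschitz in the graph metric. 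By Lemma~\ref{lem:treefixtail} every $\sigma\in\Aut(T)$ fixes a tail of $\tau_1,\tau_2,\ldots$ pointwise, which pins the Busemann shift of $\sigma$ to zero, so $b\circ\sigma=b$. In particular $b$ is $\Aut(G)$-invariant.

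\emph{The contradiction.} For each $v\in V(G)$, set $T(v)=\{\tau\in V(T):v\in P_\tau\}$; this is a connected subtree of $T$ by (T3). The crucial technical step is to show that $T(v)$ is \emph{finite}, with $\mathrm{diam}\,T(v)$ bounded by a constant depending only on the $\Aut(G)$-orbit of $v$. This uses undomination (so $v$ lies in only finitely many $B_n$), Lemma~\ref{halin} to bound the number of $(A,B)\in\Scal$ whose separator contains $v$, and the explicit formula for the parts in Theorem~\ref{main_td}. Since there are only finitely many $\Aut(G)$-orbits on $V(G)$, these diameters are uniformly bounded by some $D\in\mathbb N$. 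Pigeon-hole on the infinite sequence $v_1,v_2,\ldots$ yields indices $i<j$ with $j-i>D$ such that $v_i,v_j$ lie in the same $\Aut(G)$-orbit; pick $\varphi\in\Aut(G)$ with $\varphi(v_i)=v_j$. Then $\varphi(T(v_i))=T(v_j)$, so $\varphi(\tau_i)\in T(v_j)$, while also $\tau_j\in T(v_j)$. Since $b$ is $\Aut(G)$-invariant and $1$-Lipschitz,
\[
\mathrm{diam}\,T(v_j)\;\ge\;|b(\tau_j)-b(\varphi(\tau_i))|\;=\;|j-i|\;>\;D,
\]
the desired contradiction.

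The principal obstacle is the finiteness of $T(v)$ with orbit-uniform diameter bound, which requires a detailed analysis of the parts $P_\tau$ produced by Theorem~\ref{main_td}, combined with undomination and Lemma~\ref{halin}.
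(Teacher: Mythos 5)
Your overall strategy is genuinely different from the paper's. The paper, after the same reduction to the undominated case, never looks at the parts of the tree-decomposition at all: it takes the $\Aut(G)$-invariant nested set $\mathcal S$ and the one-ended tree $T$ from Theorem~\ref{thm:aut_G_inv_td}, observes that the induced action makes $T$ a one-ended quasi-transitive tree, and invokes Proposition~\ref{prp:no-1end-qt-tree} (there is no one-ended quasi-transitive tree) for the contradiction. Your route instead works with the parts $P_\tau$ of Theorem~\ref{main_td}, an invariant Busemann function on $T$ (your use of Lemma~\ref{lem:treefixtail} for its invariance is fine), and a displacement/pigeonhole argument. That could in principle work, but as written it has a genuine gap at exactly the point you flag as ``the crucial technical step''.

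Concretely: the claim that $T(v)$ is finite (equivalently, that $b$ is bounded on $T(v)$) is asserted, not proved, and the ingredients you name do not obviously deliver it. Lemma~\ref{halin} is about separators that \emph{minimally separate a fixed pair of vertices}, and a separation of $\mathcal S$ whose adhesion set contains $v$ does not separate $v$ from anything, so Lemma~\ref{halin} does not apply directly; the way the paper gets such finiteness statements (in Lemma~\ref{finitely_incomparable}, Lemma~\ref{largersep}) is via a chain/antichain analysis in the nested family: no infinite decreasing chain because $G$ is one-ended (Lemma~\ref{infinitechains}(2)), no infinite increasing chain through $v$ because $\omega$ is undominated (Lemma~\ref{infinitechains}(3)), and a separate argument excluding an infinite antichain of separations all containing $v$ in their separator (as in the ``red'' case of the proof of Lemma~\ref{largersep}). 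On top of that you need an argument, using the explicit formula $P_t=A_t\sm\bigcup_{u\in N_+(t)}(A_u\sm B_u)$, that once $v\in A_{n-1}\sm B_{n-1}$ for the exhausting sequence, any $t$ at a higher level with $v\in P_t$ must have $v$ in its adhesion set (otherwise the child $(A_{n-1},B_{n-1})$ would remove $v$ from $P_t$, or one gets an infinite strictly decreasing chain of $A$-sides). None of this is routine bookkeeping, and without it your pigeonhole step has nothing to stand on. (One small remark in the other direction: the ``orbit-uniform'' part of your obstacle is free, since the decomposition is $\Aut(G)$-invariant and hence $\mathrm{diam}\,T(\varphi(v))=\mathrm{diam}\,T(v)$; the real issue is finiteness itself. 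You also tacitly assume the parts $P_{\tau_n}$ along the ray are non-empty, which does hold since $A_n\cap B_n\subseteq P_{\tau_n}$, but should be checked.) If you fill in the finiteness lemma along the lines above, your argument goes through; but the paper's proof shows you can avoid all of this by arguing on the tree alone and quoting Proposition~\ref{prp:no-1end-qt-tree}.
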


For the proof we need the following auxiliary result.

\begin{prop}
\label{prp:no-1end-qt-tree}
There is no one-ended quasi-transitive tree.
\end{prop}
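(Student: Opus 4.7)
The plan is to derive a contradiction by combining quasi-transitivity with the fact that in a one-ended tree the ``downward'' subtrees along any ray must grow without bound.

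First I would observe that since $T$ is quasi-transitive, only finitely many vertex degrees occur (automorphisms preserve degrees and there are finitely many orbits), so $T$ is locally finite. Also, since $T$ has a unique end $\omega$, every automorphism fixes $\omega$.

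Next, for each vertex $v$ define
\[
T_v^- = \{w \in V(T) : v \text{ lies on the path from } w \text{ to } \omega\},
\]
the component of $T-p(v)$ containing $v$, where $p(v)$ denotes the neighbour of $v$ on the unique ray from $v$ to $\omega$ (with the convention $T_v^-=T$ if no such $p(v)$ exists, which does not occur here). Since any ray in $T_v^-$ would give a second end of $T$, the subtree $T_v^-$ is rayless; being also locally finite, König's infinity lemma makes $T_v^-$ finite. Because $\omega$ is $\Aut(T)$-invariant, every automorphism sending $v$ to $v'$ sends $T_v^-$ bijectively to $T_{v'}^-$, so the cardinality $|T_v^-|$ is an orbit invariant. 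Quasi-transitivity then yields a uniform bound
\[
M := \max_{v \in V(T)} |T_v^-| < \infty.
\]

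Finally I would pick a ray $v_0, v_1, v_2, \ldots$ representing $\omega$ and observe that $T_{v_n}^- \subsetneq T_{v_{n+1}}^-$ for every $n$: any $w \in T_{v_n}^-$ has $v_n$ on its path to $\omega$, and since $v_{n+1}$ is the successor of $v_n$ on that path, also $v_{n+1}$ lies on this path, so $w \in T_{v_{n+1}}^-$; moreover $v_{n+1} \in T_{v_{n+1}}^- \setminus T_{v_n}^-$. Hence $|T_{v_n}^-| \geq n+1 \to \infty$, contradicting the bound $M$.

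The only step requiring any care is verifying that each $T_v^-$ is finite; this is immediate from local finiteness and König's lemma once one notes that a ray inside $T_v^-$ could not be equivalent to $\omega$ (it would have to cross $v$ infinitely often) and hence would represent a second end of $T$. I do not anticipate a serious obstacle.
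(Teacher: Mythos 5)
There is a genuine gap at the very first step: you claim that quasi-transitivity forces $T$ to be locally finite because ``only finitely many vertex degrees occur''. Finitely many orbits indeed gives finitely many distinct degree values, but one of those values may be an infinite cardinal, so this does not rule out vertices of infinite degree. The proposition is stated (and is used in the proof of Theorem~\ref{thm:oneend-qt-thick}) for arbitrary quasi-transitive trees: the decomposition tree produced by Theorem~\ref{thm:aut_G_inv_td} has no reason to be locally finite, since a vertex of that tree may receive infinitely many incoming edges from separations of the previous level. Your whole argument hinges on local finiteness: without it, K\"onig's lemma no longer turns ``rayless'' into ``finite'' (an infinite star is rayless), so $|T_v^-|$ may be an infinite cardinal, and an infinite cardinal can perfectly well repeat along the ray even though the sets $T_{v_n}^-$ strictly increase, destroying the contradiction. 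So as written the proof only covers locally finite trees.

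The remainder of your argument (orbit-invariance of $|T_v^-|$ because the unique end is fixed, raylessness of $T_v^-$, strict growth along a ray) is correct in the locally finite case, but the general case needs a different invariant or a different idea. One repair: along a ray $v_0,v_1,\dots$ to the end, infinitely many $v_n$ lie in one orbit; an automorphism $\varphi$ with $\varphi(v_{n_1})=v_{n_2}$, $n_1<n_2$, fixes the end and hence shifts the tail of the ray, so it fixes no vertex and inverts no edge (an inverted edge would swap the side containing the unique end), and is therefore a translation along a double ray, whose two tails give two ends --- contradiction; this is essentially the Tits/Halin classification already used for Lemma~\ref{lem:treefixtail}. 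The paper's own proof takes yet another route, also free of any local finiteness assumption: pick an edge-orbit meeting a ray infinitely often, contract all other edges to get an edge-transitive tree containing a ray, and use that edge-transitive trees are regular or bi-regular and hence have at least two ends.
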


\begin{proof}
Assume that $T$ is a quasi-transitive tree and that $R$ is a ray in $T$. Then there is an edge-orbit under $\Aut(T)$ containing infinitely many edges of $R$. Contract all edges not in this orbit to obtain a tree $T'$ whose automorphism group acts transitively on edges. Clearly, every end of $T'$ corresponds to an end of $T$ (there may be more ends of $T$ which we contracted). But edge transitive trees must be either regular, or bi-regular. Hence $T'$, and thus also $T$, has at least $2$ ends.
\end{proof}

\begin{proof}[Proof of Theorem~\ref{thm:oneend-qt-thick}]
Assume for a contradiction that $G$ is a quasi-transitive, one-ended graph whose end is thin. 

If the end $\omega$ is dominated, then remove all vertices which dominate it and only keep the component $C$ in which $\omega$ lies. The resulting graph is still quasi-transitive since $C$ must be stabilised setwise by every automorphism. Furthermore, the degree of $\omega$ does not increase by deleting parts of the graph. Hence we can without loss of generality assume that the end of the counterexample $G$ is undominated.

Now apply Theorem~\ref{thm:aut_G_inv_td} to $G$. This gives a nested set $\mathcal S$ of separations which is invariant under automorphisms---in particular, there are only finitely many orbits of $\mathcal S$ under the action of $\Aut(G)$. Theorem~\ref{thm:aut_G_inv_td} further tells us that there is a bijection between $\mathcal S$ and the edges of a one-ended tree $T$ such that the action of $\Aut(G)$ on $\mathcal S$ induces an action on $T$ by automorphisms. Hence $T$ is a quasi-transitive one-ended tree, which contradicts Proposition~\ref{prp:no-1end-qt-tree}.
\end{proof}

\appendix
\section{Appendix}

We say that a vertex $v$ \emph{dominates} a ray $L$ if there are infinitely many $v-L$ paths, any 
two only having $v$ as a common vertex.  It follows from the definition of an end that if a vertex 
domintes one ray belonging to an end then it dominates every ray belonging to that end and dominates the end. 

\begin{proof}[Proof of Lemma~\ref{lem:finite_dominating}] 
Assume that the set $X$ of dominating vertices is infinite.  By the above we can assume that there 
is a ray $R$ and infinitely many vertices $x_1, x_2, \ldots$ that dominate $R$ in $G$.  We show that 
$G$ must then contain a subdivision of the complete graph on $x_1, x_2, \ldots$.  Start by taking 
vertices $v_1$ and $v_2$ on $R_1$ such that there are disjoint $x_1-v_1$ and $x_2-v_2$ paths.  Then 
we find vertices $w_1$ and $w_2$ furher along the ray $R_1$ such that there are disjoint $x_1-w_1$ 
and $x_3-w_3$ paths and still further along we find vertices $u_2$ and $u_3$ such that there are 
disjoint $x_2-u_2$ and $x_3-u_3$ paths.  Adding the relevant segments of $R$ we find $x_1-x_2$, 
$x_1-x_3$ and $x_2-x_3$ paths having at most their endvertices in common.  The subgraph of $G$ 
consisting of these three paths is thus a subdivision of the complete graph on three vertices.    
Using induction we can find an increasing sequence of subgraphs $H_n$ of $G$ that contains the 
vertices $x_1, x_2, \ldots, x_n$ and also paths $P_{ij}$ linking $x_i$ and $x_j$ such that any two 
such paths have at most their end vertices in common.  The subgraph $H_n$ is a subdivision of the 
complete graph on $n$-vertices.  The subgraph $H=\bigcup_{i=1}^\infty H_i$ is a subdivision of the 
complete graph on (countably) infinite set of vertices and contains an infinite family of pairwise 
disjoint rays that all belong to the end $\omega$.  This contradicts our assumptions and we conclude 
that $T$ must be finite.  
\end{proof}

A \emph{ray decomposition}\footnote{Halin used the German term `schwach m-fach 
kettenf\"ormig'.} of \emph{adhesion $m$} of a graph $G$ consists of subgraphs $G_1, G_2, \ldots$ 
such that:
\begin{enumerate}
\item $G=\bigcup_{i=1}^\infty G_i$;
\item if $T_{n+1}=\big(\bigcup_{i=1}^n G_i\big)\cap G_{n+1}$ then $|T_{n+1}|=m$ and 
$T_{n+1}\subseteq G_n\setminus \big(\bigcup_{i=1}^{n-1} G_i\big)$ for $n=1, 2, \ldots$;
\item for each value of $n=1, 2, \ldots$ there are $m$ pairwise disjoint paths in $G_{n+1}$ that 
have  their initial vertices in $T_{n+1}$ and teminal vertices in $T_{n+2}$;
\item none of the subgraphs $G_i$ contains a ray.
\end{enumerate}

  The 
following Menger-type result  is used by Halin in his proof of \cite[Satz~2]{zbMATH03212734}.  In 
the proof we also use ideas from another one of Halin's papers \cite[Proof of Satz 
3]{zbMATH03205933}.

\begin{thm}  \label{thm:menger-like} Let $G$ be a locally finite connected graph with the property 
that $G$ contains a family of $m$ pairwise disjoint rays but there is no such family of $m+1$ 
pairwise disjoint rays.  Then there is in $G$ a family of pairwise disjoint separators $T_1, T_2, 
\ldots$ such that each contains precisely $m$ vertices and a ray in $G$ must for some $n_0$ 
intersects all the sets $T_n$ for $n\geq n_0$. 
\end{thm}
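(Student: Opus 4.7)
The plan is to construct the separators $T_n$ iteratively via a Menger-type lemma. First observe that, since $G$ is locally finite and admits a maximum of $m$ pairwise disjoint rays, $G$ has only finitely many ends $\omega_1,\dots,\omega_s$ of finite vertex degrees $m_1,\dots,m_s$ with $\sum_i m_i = m$; moreover local finiteness precludes any vertex from dominating an end, since a dominating vertex would necessarily have infinite degree.

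The main technical step is the following claim: for every finite set $F\subseteq V(G)$ there is a separator $S\subseteq V(G)\setminus F$ of size exactly $m$ such that $F$ lies in a finite component of $G-S$ (equivalently, $S$ separates $F$ from the tails of all rays in $G$). To establish this I would show that the maximum number of pairwise vertex-disjoint paths from $F$ to the tails of rays of $G$ is exactly $m$: at least $m$ by using initial segments of a fixed family $R_1,\dots,R_m$ of disjoint rays (after enlarging $F$ to contain their starting vertices), and at most $m$ because a K\"onig-type extraction applied to $m+1$ such disjoint infinite paths in the locally finite graph $G$ would produce $m+1$ pairwise disjoint rays, contradicting the hypothesis. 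Menger's theorem, applied to an exhausting sequence of finite subgraphs of $G$ and lifted to the infinite setting via a standard compactness argument, then yields a vertex separator of size exactly $m$; since the total end vertex degree equals $m$, every ray has a tail on the far side of $S$, so the $F$-component of $G-S$ is indeed finite.

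With the claim in hand the construction is direct. Enumerate $V(G)=\{v_1,v_2,\dots\}$, set $F_0=\emptyset$, and at step $n$ let $F_n$ consist of $F_{n-1}$, the finite $F_{n-1}$-component of $G-T_{n-1}$, all previous separators $T_1,\dots,T_{n-1}$, and the vertex $v_n$. Applying the main claim to $F_n$ produces $T_n$ of size $m$, disjoint from $F_n$ and hence from $T_1,\dots,T_{n-1}$. Since the $F_n$ exhaust $V(G)$, any ray $R$ has its initial vertex in $F_{n_0}$ for some $n_0$; for every $n\geq n_0$ the separator $T_n$ separates $F_n$ from the tails of all rays of $G$, so $R$ is forced to meet $T_n$.

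The main obstacle is the Menger-type claim itself: the upper bound of $m$ on the number of disjoint $F$-to-infinity paths requires a careful K\"onig's-lemma extraction (one has to extend disjoint paths into disjoint rays without reusing vertices), and passing from finite Menger to an exact-size separator in the infinite graph $G$ is a compactness argument that has to be spelled out. Both of these steps form the substance of what Halin proves in \cite{zbMATH03205933} and \cite{zbMATH03212734}, which the theorem explicitly relies on.
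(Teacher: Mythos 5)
Your proposal is correct and follows essentially the same route as the paper's Appendix A proof: a Menger-type claim that every finite vertex set can be cut off from infinity by a separator of exactly $m$ vertices, with the upper bound coming from a K\"onig-style extraction of $m+1$ disjoint rays from disjoint path systems in truncations, followed by an iteration that makes the separators pairwise disjoint and forces every ray to meet all but finitely many of them. The paper realises your ``exhaustion plus compactness'' step concretely via the finite auxiliary graphs $H_{ij}$ built from $B_j\setminus B_i$ with apex vertices $a$ and $b$, which in addition makes the finite Menger separator automatically avoid the finite set being separated off (the point you handle by requiring $S\subseteq V(G)\setminus F$).
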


\begin{proof}  
Fix a reference vertex $v_0$ in $G$.  Let $E_j$ denote the set of vertices in distance precisely 
$j$ 
from $v_0$.  Define also $B_i$ as the set of vertices in distance at most $i$ from $v_0$.   For 
numbers $i$ and $j$ such that $i+1<j$ we construct a new graph $H_{ij}$ such that we start with the 
subgraph of $G$ induces by $B_j$, then we remove $B_i$ but add a new vertex $a$ that has as its 
neighbourhood the set $\partial B_i$ (for a set $C$ of vertices $\partial C$ denotes the set of vertices that are not in $C$ but are adjacent to some vertex in $C$) and we also add a new vertex $b$ that has every vertex in 
$\partial (G\setminus B_j)$ as its neighbour.  Since $G$ is assumed to be locally finite the graph 
$H_{ij}$ is finite.  
(By abuse of notation we do not distinguish the additional vertices $a$ and $b$ in different graphs 
$H_{ij}$.)

Suppose that, for a fixed value of $i$, there are always for $j$ big enough  at least $k$ distinct 
$a-b$ paths in $H_{ij}$ such that any two of them interesect only in the vertices $a$ and $b$.   
Then one can use the same argument as in the proof of K\"onig's Infinity Lemma to show that then 
$G$ 
contains a family of $k$ pairwise disjoint rays.  Because $G$ does not contain a family of $m+1$ 
pairwise disjoint rays there are for each $i$ a number $j_i$ such that for every $j\geq j_i$ there 
are at most $m$ disjoint $a-b$ paths in $H_{ij_i}$.     Since $a$ and $b$ are not adjacent in 
$H_{ij_i}$ then the Menger Theorem says that minimum number of a vertices in an $a-b$ separator is 
equal to the maximal number of $a-b$ paths such that any two of the paths have no inner vertices in 
common.  Whence there is in $H_{ij_i}\setminus\{a,b\}$ a set $T$ and $a-b$ separator with precisely 
$m$ vertices.  This set is also an separator in $G$ and every ray in $G$ that has its initial 
vertex 
in $B_i$ must intersect $T$.  From this information we can easily construct our sequence of 
separators $T_1, T_2, \ldots$. 

We can also clearly assume that if $i_j$ is the smallest number such that $T_j$ is in $B_{i_j}$ 
then 
$T_k\cap B_{i_j}=\emptyset$ for all $k>j$. 
\end{proof}

\begin{cor}
Let $G$ be a connected locally finite graph.  Suppose $\omega$ is an end of $G$ and $\omega$ has 
finite vertex degree $m$.  Then there is a sequence  $T_1, T_2, \ldots$ of separators each 
containing precisely $m$ vertices such that if $C_i$ denotes the component of $G- T_i$ that 
$\omega$ belongs to then $C_1\supseteq C_2\supseteq\ldots$ and $\bigcap_{i=1}^\infty 
C_i=\emptyset$. 
\end{cor}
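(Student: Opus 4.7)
I would prove the corollary by adapting the proof of \autoref{thm:menger-like} to track only the end $\omega$, and by then thinning the resulting family of separators into a sequence with the required nesting.

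First, fix a reference vertex $v_0\in V(G)$ and, by the vertex-degree hypothesis, $m$ pairwise disjoint rays $R^{(1)},\ldots,R^{(m)}$ in $\omega$. Let $B_i$ be the ball of radius $i$ around $v_0$, which is finite because $G$ is locally finite, and let $K_j$ denote the component of $G\setminus B_j$ containing the tails of the $R^{(\ell)}$. Since $G$ is locally finite and $B_j$ is finite, $G\setminus B_j$ has only finitely many other components $L^1_j,\ldots,L^{s_j}_j$. For $i<j-1$ I would modify the auxiliary graph $H_{ij}$ from the proof of \autoref{thm:menger-like} in two ways: the new vertex $b$ is now adjacent only to those vertices of $B_j$ having a neighbour in $K_j$, and for each $r$ I add a bypass vertex $c_r$ adjacent to every vertex of $B_j$ having a neighbour in $L^r_j$. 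For $i$ large enough that every $R^{(\ell)}$ starts in $B_i$, and then $j\gg i$ so that each $R^{(\ell)}$ first exits $B_j$ into $K_j$, the initial segments $R^{(\ell)}\cap B_j$ give $m$ internally disjoint $a$-$b$ paths in this modified $H_{ij}$. Conversely, if there were $m+1$ internally disjoint $a$-$b$ paths in $H_{ij}$ for arbitrarily large $j$, the König compactness argument underlying \autoref{thm:menger-like}, applied to the lifts of these paths to $G$ (each $c_r$ replaced by a path through the corresponding $L^r_j$), would produce $m+1$ pairwise disjoint rays of $G$ with a tail in every $K_j$, hence all belonging to $\omega$, contradicting the vertex-degree assumption. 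Menger's theorem in the finite modified $H_{ij}$ then yields an $a$-$b$ separator $T$ of exactly $m$ vertices lying in $B_j\setminus B_i$, and the bypass vertices $c_r$ guarantee that $T$ is in fact a separator between $B_i$ and $\omega$ in $G$.

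The required sequence is then built inductively. Set $i_1=0$; having chosen $i_n$, pick $j_n$ large enough for the construction above and let $T_n$ be the resulting separator, then set $i_{n+1}>j_n$. Writing $C_n$ for the component of $G\setminus T_n$ containing $\omega$, the choice $i_{n+1}>j_n$ ensures $T_n\subseteq B_{j_n}\subseteq B_{i_{n+1}}$, and since $T_{n+1}$ separates $B_{i_{n+1}}$ from $\omega$ we have $T_n\cap C_{n+1}=\emptyset$. Hence the connected set $C_{n+1}$ lies in $G\setminus T_{n+1}\setminus T_n\subseteq G\setminus T_n$, inside the unique component of $G\setminus T_n$ containing $\omega$, namely $C_n$; thus $C_{n+1}\subseteq C_n$. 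Finally, every $v\in V(G)$ is at finite distance from $v_0$ and so lies in $B_{i_n}$ for all sufficiently large $n$, while $B_{i_n}\cap C_n=\emptyset$ by construction; this forces $\bigcap_n C_n=\emptyset$.

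The principal obstacle is the upper bound in the Menger/König step after bypass vertices are introduced: lifting $m+1$ internally disjoint $a$-$b$ paths from the modified $H_{ij}$ back to $G$ requires replacing each visited $c_r$ by an actual path through $L^r_j$, and for the König extraction to deliver $m+1$ genuinely disjoint rays the lifted walks must remain internally disjoint. The key combinatorial observation is that each $c_r$ is a single vertex of $H_{ij}$, so two internally disjoint paths cannot both visit it; consequently different paths in the family enter disjoint sets of non-$\omega$ components, and their lifts through those disjoint components cannot interfere. Once this point is handled the contradiction with the vertex-degree hypothesis is immediate, and the rest of the argument (nesting and empty intersection of the $C_n$) is routine.
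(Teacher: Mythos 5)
Your overall strategy is the paper's: rerun the proof of Theorem~\ref{thm:menger-like} with $b$ joined only to the boundary of the $\omega$-component $K_j$ of $G\setminus B_j$, and then thin the resulting separators into a nested sequence (your final nesting and empty-intersection argument is fine, given good separators). The genuine gap lies in the bypass vertices $c_r$, which are your own addition --- the paper's auxiliary graph stays inside $B_j$ and only shrinks the neighbourhood of $b$. Menger's theorem in your modified $H_{ij}$ returns a minimum $a$--$b$ separator of the \emph{auxiliary} graph, and nothing prevents that separator from containing bypass vertices; then it is neither a set of $m$ vertices of $G$ nor contained in $B_j\setminus B_i$, so the step ``Menger's theorem in the finite modified $H_{ij}$ then yields an $a$-$b$ separator $T$ of exactly $m$ vertices lying in $B_j\setminus B_i$'' is unjustified. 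It can really fail: if between radii $i$ and $j$ all $m$ rays of $\omega$ leave $B_j$ through one and the same non-$\omega$ component $L^1_j$, re-enter $B_j$, and only then run into $K_j$, and if inside $B_j\setminus B_i$ there is no other connection between the pre-excursion and post-excursion parts, then every $a$--$b$ path of your auxiliary graph goes through the single vertex $c_1$, so the minimum cut is $\{c_1\}$: size $1$, not $m$, and not a separator of $G$ at all. The same configuration breaks your lower bound: $R^{(\ell)}\cap B_j$ is then not a path of $H_{ij}$ (the rays leave and re-enter $B_j$, and possibly $B_i$), and after replacing excursions by bypass vertices the $m$ rays compete for the same $c_r$, so they do not yield $m$ internally disjoint $a$--$b$ paths. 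To rescue the construction you would have to show that for a suitable choice of $j$ a minimum cut of size exactly $m$ avoiding all $c_r$ exists, which is precisely what is not argued.

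A second, related soft spot is the upper bound. The K\"onig-type limit of families of $m+1$ disjoint $a$--$b$ paths produces $m+1$ disjoint rays of $G$, but that argument only remembers initial segments of the paths; it does not make the limit rays meet every $K_j$, and they may well converge to other ends. Since here, unlike in Theorem~\ref{thm:menger-like}, the hypothesis bounds only the number of disjoint rays \emph{in $\omega$}, the phrase ``with a tail in every $K_j$, hence all belonging to $\omega$'' is asserted rather than derived, and it is exactly the point where the corollary needs more than a verbatim repetition of the theorem's compactness step. So before the (correct) nesting argument can run, you need real arguments both for the size-$m$, bypass-free location of $T$ and for the membership of the limit rays in $\omega$.
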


\begin{proof}
We use exactly the same argument as above except that when we construct the $H_{ij}$ we only put in 
edges from $b$ to those vertices in $E_j$ that are in the boundary of the component of $G\setminus 
B_j$ that $\omega$ lies in.
\end{proof}

\begin{proof}[Proof of Lemma~\ref{Lrelevant}.]
The first part of the Lemma about the existence of a family of $k$ pairwise disjoint rays in 
$\omega$ with their initial vertices in $A\cap B$ follows directly from the above.

For the second part, the only thing we need to show is that there cannot exist a separation $(C,D)$ 
of order $<k$ such that $A\subseteq C$ and $\omega$ lies in $D$.  Such a separation cannot exist 
because the $k$ pairwise disjoint rays that have their initial vertices in $A\cap B$ and belong to 
$\omega$ would all have to pass through $C\cap D$.   
\end{proof}

\begin{thm} \label{Tchainshaped} {\rm(\cite[Satz~2]{zbMATH03212734})}  Let $G$ be a graph with the 
property that it contains a family of $m$ pairwise disjoint rays but no family of $m+1$ pairwise 
disjoint rays.  Let $X$ denote the set of vertices in $G$ that dominate some ray.  Then the set $X$ 
is finite and the graph $G- X$ has a ray decomposition of adhesion $m$.  
\end{thm}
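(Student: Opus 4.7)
The plan is to prove the two assertions separately. For the finiteness of $X$, I would first note that $G$ has at most $m$ ends: rays in distinct ends can be chosen pairwise disjoint (iteratively, by separating one end from the rest with a finite cut), so $m+1$ ends would yield $m+1$ pairwise disjoint rays. Every $x \in X$ dominates a ray belonging to some end, so if $X$ were infinite the pigeonhole principle would produce an end $\omega$ dominated by infinitely many vertices; the $K_{\aleph_0}$-subdivision argument already spelled out in the proof of Lemma~\ref{lem:finite_dominating} would then produce $\aleph_0$ pairwise disjoint rays in $\omega$, contradicting the hypothesis on $m$.

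Next I would pass to $G' := G - X$. Since $X$ is finite, taking tails of the original rays that avoid $X$ gives $m$ pairwise disjoint rays $L_1, \ldots, L_m$ in $G'$, and $G'$ obviously still has no $m+1$ pairwise disjoint rays. Moreover no vertex of $G'$ dominates any ray of $G'$: if it did, it would dominate the same ray in $G$ and hence belong to $X$. The task is to construct separators $T_2, T_3, \ldots$ of $G'$ of size exactly $m$ together with corresponding rayless pieces $G_1, G_2, \ldots$ satisfying the ray-decomposition axioms.

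The central technical step is a Menger-type claim: for every finite $F \subseteq V(G')$ there exists a separator $T$ of $G'$ with $|T| = m$ such that $F \setminus T$ and some tail of each $L_i$ lie in different components of $G' - T$, and every vertex of $T$ has a neighbour in the component $C$ containing the ray-tails. The bound $|T| \ge m$ is immediate because the $L_i$ must each cross $T$; the bound $|T| \le m$ follows from an infinite-graph Menger theorem together with the no-domination property of $G'$, since $m+1$ internally disjoint paths from $F$ to the "end region" would extend to $m+1$ pairwise disjoint rays. Granted this claim, I would iteratively exhaust $V(G')$ by finite sets $F_1 \subsetneq F_2 \subsetneq \cdots$, each containing the previously constructed separators, the first $n$ vertices of every $L_i$, and the first $n$ vertices in some fixed enumeration of $V(G')$; applying the claim to $F_n$ yields $T_{n+1}$, and I would define $G_1$ to be the subgraph induced by $T_2$ together with the component of $G' - T_2$ containing $F_1 \setminus T_2$, and $G_{n+1}$ to be the subgraph induced by $T_{n+1} \cup T_{n+2}$ together with the component of $G' - (T_{n+1} \cup T_{n+2})$ containing $F_{n+1} \setminus F_n$.

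Verifying the ray-decomposition axioms is then mostly bookkeeping: $\bigcup_n G_n = G'$ by exhaustion; $|T_{n+1}| = m$ by the claim; each $G_n$ is rayless, since a ray in $G_n$ would contradict the defining property of $T_{n+1}$ (every ray of $G'$ must have a tail in the outer component of $T_{n+1}$); and the $m$ pairwise disjoint $T_{n+1}$--$T_{n+2}$ paths in $G_{n+1}$ arise by rerouting the $L_i$ inside $G_{n+1}$ via a finite application of Menger's theorem. The main obstacle will be the central claim in full generality: the sphere-based argument of Theorem~\ref{thm:menger-like} relies on local finiteness via a König-style compactness, so in the general case one must appeal instead to Halin's version of Menger's theorem for infinite graphs, with the no-domination property of $G'$ ensuring that finite cuts of exactly the right size exist.
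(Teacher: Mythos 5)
Your reduction of the first assertion (finiteness of $X$) is fine and matches the paper's: every ray meets the union of the $m$ given rays infinitely often, so there are at most $m$ ends, each of finite vertex degree, and the subdivided-$K_{\aleph_0}$ argument of Lemma~\ref{lem:finite_dominating} finishes it. The problem is your ``central technical step''. You need, for every finite $F\subseteq V(G-X)$, a separator of size \emph{exactly} $m$ separating $F$ from tails of the rays, in a graph that need not be locally finite. Your justification of the upper bound --- ``$m+1$ internally disjoint paths from $F$ to the end region would extend to $m+1$ pairwise disjoint rays'' --- is precisely the step that fails in general graphs: having, for every finite horizon, $m+1$ disjoint paths reaching beyond that horizon only yields $m+1$ disjoint rays via a compactness/K\"onig-type limit argument, which is exactly what local finiteness buys you in Theorem~\ref{thm:menger-like} and what is unavailable here. (Dominating vertices are the standard obstruction, but removing $X$ does not make the graph locally finite, and no off-the-shelf ``infinite Menger theorem'' gives finite cuts of size $m$ against an end; the Aharoni--Berger/Halin-type results concern disjoint $A$--$B$ paths versus separators, not end degrees.) Moreover, inside this paper your claim is essentially Theorem~\ref{Tsequence}, which the authors \emph{deduce from} Theorem~\ref{Tchainshaped}; assuming it here inverts the logical order. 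So the one thing you flag as ``the main obstacle'' is in fact the content of the theorem, and your proposal does not supply it.

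The paper closes this gap by a structural reduction to the locally finite case rather than by a stronger Menger theorem: with $R=R_1\cup\dots\cup R_m$, it first shows every component $C$ of $G-R$ has finite boundary $\partial C$ (a spanning tree of $C$ with the vertices of $\partial C$ attached would otherwise have infinitely many leaves, forcing either a ray disjoint from $R$ or a vertex dominating some $R_i$, both excluded); it then replaces each such $C$ by a locally finite connected subgraph $C_S\subseteq C\cup\partial C$ containing $S=\partial C$, and observes that the resulting locally finite graph $G'$ has a ray decomposition of adhesion $m$ if and only if $G-X$ does. Only then is the locally finite Menger-type Theorem~\ref{thm:menger-like} applied to produce the separators $T_2,T_3,\dots$, and the pieces are taken as $G_i=T_i\cup{}$(components of $G-T_i$ containing ray tails) with $G_1$ defined as everything outside $G_2\setminus T_2$ --- note this last choice also repairs the smaller bookkeeping defects in your construction (your $G_1$ and $G_{n+1}$ need not cover components of $G'-T_n$ missed by your chosen sets, and $F_{n+1}\setminus F_n$ need not lie in a single component). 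If you want to salvage your outline, you should either prove your central claim directly for non-locally-finite graphs with undominated thin ends (essentially reproving Theorem~\ref{Tsequence} from scratch), or adopt the paper's locally finite surrogate $G'$; as written, the argument has a genuine hole exactly where the theorem is hardest.
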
 
 
\begin{proof}  Let $R_1, \ldots, R_m$ denote a family of pairwise disjoint rays.  Set $R=R_1\cup 
\cdots\cup R_m$.  

Any ray in $G$ must intersect the set $R$ in infinitely many vertices and thus intersects one of 
the 
rays $R_1, \ldots, R_m$ in infinitely many vertices.  From this we conclude that every ray in $G$ 
is 
in the same end as one of the rays $R_1, \ldots, R_m$.  Thus a vertex that dominates some ray in 
$G$ 
must dominate one of the rays $R_1, \ldots, R_m$.
 
 In Lemma~\ref{lem:finite_dominating} we have already shown that the set of vertices dominating an 
end of finite vertex degree is finite.
Note also that if a vertex in $R$ is in infinitely many distinct sets of the type $\partial C$ 
where 
$C$ is a component of $G\setminus R$ then $x$ would be a dominating vertex of some ray $R_i$.  Thus 
there can only be finitely many vertices in $R$ with this property.  

We will now show that $G-X$ has a ray decomposition of adhesion $m$.  To simplify the 
notation we 
will in the rest of the proof assume that $X$ is empty.

Assume now that there is a component $C$ of $G-R$ such that $\partial C$ is infinite.  
Take 
a spanning tree of $C$ and then adjoin the vertices in $\partial C$ to this tree using edges in 
$G$. 
 Now we have a tree with infinitely many leafs.  It is now apparent that either the tree contains a 
ray that does not intersect $R$ or there is a vertex in $C$ that dominates a ray in $G$.  Both 
possibilities are contrary to our assumptions and we can conclude that $\partial C$ is finite for 
every component $C$ of $G\setminus R$.  

For every set $S$ in $R$ of such that  $S=\partial C$  for some component $C$ in $G\setminus R$ we 
find a locally finite connected subgraph $C_S$ of $C\cup S$ containing $S$.  The graph $G'$ that is 
the union of $R$ and all the subgraphs $C_S$ is a locally finite graph.  The original graph $G$ has 
a ray decomposition of adhesion $m$ if and only if $G'$ has a ray decomposition of adhesion $m$.  

At this point we apply Theorem~\ref{thm:menger-like}.
From Theorem~\ref{thm:menger-like} we have the sequence $T_2, T_3, \ldots$ of separators.   We 
choose $T_2$ such that all the rays $R_1, \ldots, R_m$ intersect $T_2$.    We start by defining 
$G_i$ for $i\geq 2$ as the union of $T_i$ and all those components of $G- T_i$ that contain 
the tail of some ray $R_i$.  Finally, set $G_1=G\setminus (G_2\setminus T_2)$.   Note that none of 
the subgraphs $G_i$ can contain a ray and our family of rays provides a family of $m$ pairwise 
disjoint $T_i-T_{i+1}$ paths.  Now we have shown that $G$ has a ray decomposition of adhesion $m$.
 \end{proof}

Finally, we are now ready to show how Halin's result above implies Theorem~\ref{Tsequence} that concerns 
$\omega$-relevant separations.

\begin{proof}[Proof of Theorem~\ref{Tsequence}.]  We continue with the notation in the proof of 
Theorem~\ref{Tchainshaped}.  Recall that there are infinitely many pairwise disjoint paths 
connecting a ray $R_i$ to a ray $R_j$.  Thus we may assume that the initial vertices of the rays 
$R_1, \ldots R_k$ all belong to the same component of $G-T_2$.  We set $A_n$ as the union of the 
component of $G-T_{n+1}$ that contains these initial vertices with $T_{n+1}$.  Then set $B_n=(G\sm 
A_n)\cup T_{n+1}$.  Now it is trivial to check that the sequence $(A_n, B_n)$ of separations 
satisfies the conditions.
\end{proof}

\section{Appendix}

\begin{proof}[Proof of Lemma~\ref{lem:treefixtail}.]
Let $\sigma$ be an automorphism of $T$.  
In cite \cite[Proposition~3.2]{zbMATH03341048} Tits proved that there are three types of 
automorphisms of a tree:  (i) those that fix some vertex, (ii) those that fix no vertex but leave 
an 
edge invariant and (iii) those that leave some double-ray $\ldots,, v_{-1}, v_0, v_1, v_2, \ldots$  
invariant and act as non-trivial translations on that double-ray.  (Similar results were proved 
independently by Halin in \cite{zbMATH03417497}.)   Since $T$ is one-ended it contains no double-ray and 
thus  (iii) is impossible.  Suppose now that $\sigma$ fixes no vertex in $T$ but leaves the edge 
$e$ 
invariant.  The end of $T$ lies in one of the components of $T-e$ and $\sigma$ swaps the two 
components of $T-e$.  This is impossible, because $T$ has only one end and this end must belong to 
one of the components of $T-e$.  Hence  $\sigma$ must fix some vertex $v$.  There is a unique ray 
$R'$ in $T$ with $v$ as an initial vertex and this ray is fixed pointwise by $\sigma$.  The two 
rays 
$R$ and $R'$ intersect in a ray that is a tail of $R$ and this tail of $R$ is fixed pointwise by 
$\sigma$.  
\end{proof}

\section{Appendix}

In this Appendix we prove Lemma~\ref{Lrayless} which is a slightly sharpened version of Lemma 3 
from 
Halin's paper \cite{zbMATH01701677}.  The change is that  \lq uncountable\rq\  in Halin's 
results is replaced by \lq at least 
$2^{\aleph_0}$ elements\rq.

First there is an auxilliary result that corresponds to Lemma 2 in \cite{zbMATH01701677}.

\begin{lem}\label{Lfiniteoruncountable}
Let $G$ be a connected graph and $\Gamma=\Aut(G)$.  Suppose $D$ is a subset of the vertex set of 
$G$.  Let $\{C_i\}_{i\in I}$ denote the family of components of $G-D$.  
Define $G_i$ as the subgraph spanned by $C_i\cup\partial C_i$.  Set 
$\Gamma_{i}=\Aut(G_{i})_{(\partial C_i)}$.  Suppose that $\Gamma_i$ is either finite or has at 
least $2^{\aleph_0}$ elements for all $i$.  Then $\Gamma_{(D)}$ is either 
finite or has at least $2^{\aleph_0}$ elements. 
\end{lem}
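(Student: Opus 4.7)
The plan is to use the independence property of $\Aut(G)$ (recalled in Section~\ref{sec:autogroup}) to decompose $\Gamma_{(D)}$ along the orbits of its natural action on the components $\{C_i\}_{i\in I}$. For each orbit $O$, let $\Gamma^O \leq \Gamma_{(D)}$ be the subgroup of automorphisms that fix every vertex outside $\bigcup_{i\in O}C_i$. Because each edge of $G$ has both endpoints in a single $G_i$ or both endpoints in $D$, independence yields an isomorphism $\Gamma_{(D)} \cong \prod_O \Gamma^O$, one factor per orbit, with surjectivity obtained by glueing independently chosen factors.

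If some $\Gamma_{i_0}$ has at least $2^{\aleph_0}$ elements, then the independence property extends each $\sigma\in\Gamma_{i_0}$ to an element of $\Gamma_{(D)}$ equal to $\sigma$ on $G_{i_0}$ and the identity elsewhere, already giving $|\Gamma_{(D)}|\geq 2^{\aleph_0}$. So assume from now on that every $\Gamma_i$ is finite. Since $\Gamma_{(D)}$ fixes $D$ pointwise, all components in a single $\Gamma_{(D)}$-orbit share the same boundary set in $D$, and any two of them are isomorphic via an isomorphism fixing that common boundary. Consequently any permutation of an orbit $O$ can be realized, via independence, by choosing a boundary-fixing isomorphism for each pair and gluing, so $\mathrm{Sym}(O)\hookrightarrow\Gamma^O$. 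Whenever some orbit $O$ is infinite, this forces $|\Gamma^O|\geq|\mathrm{Sym}(O)|\geq 2^{\aleph_0}$ and hence $|\Gamma_{(D)}|\geq 2^{\aleph_0}$.

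In the remaining case every orbit $O$ is finite, so each $\Gamma^O$ sits inside the finite group $\mathrm{Sym}(O)\ltimes\prod_{i\in O}\Gamma_i$ and is therefore itself finite. The decomposition $\Gamma_{(D)}\cong\prod_O\Gamma^O$ then yields the dichotomy directly: if only finitely many factors $\Gamma^O$ are non-trivial then $\Gamma_{(D)}$ is finite, and otherwise an infinite product of non-trivial finite groups contributes at least $2^{\aleph_0}$ elements to $\Gamma_{(D)}$. The main technical point throughout is the verification that a coordinate-wise choice across infinitely many orbits really assembles into an element of $\Aut(G)$; this follows uniformly from the observation above that every edge of $G$ is moved by at most one of the factors of the product.
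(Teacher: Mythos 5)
Your proof is correct, and its engine is the same as the paper's: elements of $\Gamma_{(D)}$ can be assembled and modified component-by-component, because every edge of $G$ lies inside some $G_i$ or inside $D$. The difference is organisational. The paper, after discarding the case of a $\Gamma_i$ with at least $2^{\aleph_0}$ elements, argues in two ad hoc infinite cases --- infinitely many non-trivial groups $\Gamma_i$ (glue arbitrary families $\{\sigma_i\}$), or infinitely many disjoint ordered pairs of \emph{equivalent} components, meaning components with a common boundary that are isomorphic via a boundary-fixing isomorphism (glue swaps over arbitrary subsets of pairs) --- and then simply asserts that otherwise $\Gamma_{(D)}$ is clearly finite. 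You instead write $\Gamma_{(D)}$ as the full direct product $\prod_O \Gamma^O$ over the $\Gamma_{(D)}$-orbits of components, handle an infinite orbit by realising every permutation of it (the paper's ``equivalent pairs'' case in disguise, since components in one orbit share a boundary and are boundary-fixing isomorphic), and bound $\Gamma^O$ for a finite orbit by a finite wreath-type group $\mathrm{Sym}(O)\ltimes \prod_{i \in O}\Gamma_i$. What this buys is a completely explicit justification of the paper's closing ``clearly finite'' step, and a case analysis that is visibly exhaustive. Two points deserve one extra sentence each, though neither is a gap: both the embedding $\mathrm{Sym}(O)\hookrightarrow \Gamma^O$ and the containment of $\Gamma^O$ in the wreath-type product are cleanest after fixing reference boundary-fixing isomorphisms from one component of the orbit to all the others (these exist exactly because the orbit has a common boundary, as you observe), and for the cardinality conclusions you only need these maps to be injective as maps of sets, not group homomorphisms.
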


\begin{proof}     If one of the groups $\gamma_i$ has at least $2^{\aleph_0}$ elements then 
there is nothing more to do.  So, we assume that all these groups are finite.  

Now there are two situations where it is possible that $\Gamma_{(D)}$ is infinite.  The first is 
when infinitely many of the groups $\Gamma_i$ are non-trivial.
For any family $\{\sigma_i\}_{i\in I}$ such that $\sigma_i\in \Gamma_{i}$ we can find an 
automorphism $\sigma\in \Gamma_{(G\setminus C_i)}\subseteq \Gamma_{(D)}$ such that the restriction 
to $C_i$ equals $\sigma_i$ for all $i$.  If infinitely many of the groups $\Gamma_{C_i}$ are 
nontrivial, then there are at least $2^{\aleph_0}$ such families $\{\sigma_i\}_{i\in I}$ and 
$\Gamma_{(D)}$ must have at least $2^{\aleph_0}$ elements.  

We say that two components $C_i$ and $C_j$ are equivalent if $\partial C_i=\partial C_j$ and there 
is an isomorphism $\varphi_{ij}$ from the subgraph $G_i$ to the subgraph $G_j$ fixing every vertex 
in $\partial C_i=\partial C_j$.  Clearly there is an automorpism $\sigma_{ij}$ of $G$ that fixes 
every vertex that is neither in $C_i$ nor $C_j$ such that $\sigma_{ij}(v)=\varphi_{ij}(v)$ for 
$v\in 
C_i$ and $\sigma_{ij}(v)=\varphi_{ij}^{-1}(v)$  for $v\in C_j$.  If there are infinitely many 
disjoint ordered pairs of equivalent components we can for any subset of these pairs find an 
automorphism $\sigma\in\Gamma_{(D)}$ such that if $(C_i, C_j)$ is in our subset then the 
restriction 
of $\sigma$ to $C_i\cup C_j$ is equal to the restriction of $\sigma_{ij}$.  There are at least 
$2^{\aleph_0}$ such sets and thus $\Gamma_{(D)}$  has at least $2^{\aleph_0}$ elements.  

If neither of the two cases above occurs then   $\Gamma_{(D)}$ is clearly finite.
\end{proof}

\begin{proof}[Proof of Lemma~\ref{Lrayless}.]  
Following Schmidt \cite{zbMATH03834026} (see also  Halin's paper \cite[Section 3]{zbMATH01416675}) 
we define, using induction, for each ordinal $\lambda$ a class of graphs $A(\lambda)$.  The class 
$A(0)$ is the class of finite graphs.  Suppose $\lambda>0$ and $A(\mu)$ has already been defined 
for 
all $\mu<\lambda$.  A graph $G$ is in the class $A(\lambda)$ if and only if it contains a finite 
set 
$F$ of vertices such that each component of $G-F$ is in $A(\mu)$ for some $\mu<\lambda$.  It is 
shown in the papers referred to above that if $G$ belongs to $A(\lambda)$ for some ordinal $\lambda$ 
then 
$G$ is rayless and, conversely, every rayless graph belongs to $A(\lambda)$ for some ordinal 
$\lambda$.  For a rayless graph $G$ we define $o(G)$ as the smallest ordinal $\lambda$ such that 
$G$ 
is in $A(\lambda)$.  

The Lemma is proved by induction over $o(G)$.  If $o(G)=0$ then the graph $G$ is finite and the 
automorphism group is also finite.

Assume that the result is true for all rayless graphs $H$ such that $o(H)<o(G)$.  Find a finite set 
$F$ of vertices such that each of the components of $G-F$ has a smaller order than $G$.  Denote the 
family of components of $G-F$ with $\{C_i\}_{i\in I}$.  Denote with $G_i$ the subgraph induced by 
$C_i\cup\partial C_i$.  By induction hypothesis the pointwise stabiliser of $\partial C_i$ in 
$\Aut(G_i)$ is either finite or has at least $2^{\aleph_0}$ elements.   
Lemma~\ref{Lfiniteoruncountable} above implies that $\Aut(G)_{(D)}$ is either finite or has at 
least 
 $2^{\aleph_0}$ elements.
\end{proof}

\bibliographystyle{abbrv}
\bibliography{references}
\end{document}